\documentclass{amsart}
\usepackage{graphicx} 
\usepackage{amsmath,amssymb,amsthm}
\usepackage{amsbsy}
\usepackage[utf8]{inputenc}
\usepackage{mathrsfs}
\usepackage{version, tabularx, multicol}
\usepackage{graphicx,float,psfrag}
\usepackage{subfigure}
\usepackage{epstopdf}
\usepackage{color,latexsym,amsfonts}
\usepackage{mathtools}
\usepackage[pagewise]{lineno}
\usepackage{pifont} 
\usepackage[shortlabels,inline]{enumitem}
\usepackage{stmaryrd}
\usepackage{pifont} 
\usepackage{hyperref}
\usepackage{tikz}

\newcommand{\cuthere}{%
\noindent
\raisebox{-2.8pt}[0pt][0.95\baselineskip]{\ding{34}}
\unskip{\tiny\dotfill}
}
\newcolumntype{M}[1]{>{\centering\arraybackslash}m{#1}}

\newcommand{\bN}{{\mathbf N}}
\newcommand{\bH}{{\mathbf H}}

\newcommand{\cc}{{\mathcal C}}

\newcommand{\cf}{{\mathcal F}}

\newcommand{\ci}{{\mathcal I}}
\newcommand{\cj}{{\mathcal J}}
\newcommand{\ck}{{\mathcal K}}

\newcommand{\cl}{\mathcal L}

\newcommand{\cn}{{\mathcal N}}
\newcommand{\cm}{{\mathcal M}}

\newcommand{\height}{\mathbf{\mathrm{height}}}
\newcommand{\node}{\mathbf{\mathrm{node}}}
\newcommand{\mass}{\mathbf{\mathrm{mass}}}

\newcommand{\ct}{{\mathcal T}}

\newcommand{\cth}{\ct^{\height}_h}
\newcommand{\wck}{\widehat {\ck}}
\newcommand{\ckh}{\ck^{\height}_h}
\newcommand{\ckH}{\ck^{\height}_H}
\newcommand{\ckhp}{\ck^{\height}_{h'}}
\newcommand{\ctn}{\ct^{\node}_\delta}

\newcommand{\ctnr}{\ct^{\node,*}_\delta}
\newcommand{\ckn}{\ck^{\node}_\delta}
\newcommand{\cknr}{\ck^{\node,*}_\delta}
\newcommand{\cknrp}{\ck^{\node,*}_{\delta'}}
\newcommand{\cknD}{\ck^{\node}_\Delta}
\newcommand{\ctm}{{\mathcal T}^{\mass}_r}
\newcommand{\ctmun}{{\mathcal T}^{\mass}_1}
\newcommand{\ckm}{\ck^{\mass}_r}
\newcommand{\ckmun}{\ck^{\mass}_1}

\newcommand{\br}{{\mathrm R}}
\newcommand{\wbr}{\widehat {\mathrm R}}

\newcommand{\E}{{\mathbb E}}

\newcommand{\N}{{\mathbb N}}

\renewcommand{\P}{{\mathbb P}}

\newcommand{\R}{{\mathbb R}}

\newcommand{\T}{{\mathbb T}}

\newcommand{\rd}{{\rm d}}

\newcommand{\bm}{\mathbf m}
\newcommand{\bt}{{\mathbf t}}

\newcommand{\bS}{{\mathbf S}}
\newcommand{\bT}{{\mathbf T}}
\newcommand{\bV}{{\mathbf V}}
\newcommand{\bZ}{{\mathbf Z}}

\newcommand{\bY}{{\mathbf Y}}

\newcommand{\bs}{{\mathbf s}}

\newcommand{\ind}{{\bf 1}}

\renewcommand{\root}{{\varrho}}

\newcommand{\Br}{{\mathrm {Br}}}

\newcommand{\dlgh}{{d_\mathrm {LGH}}}

\newcommand{\dlghu}{{d^{(1)}_\mathrm{LGH}}}

\newcommand{\supp}{{\mathrm {supp}}}

\newcommand{\inv}[1]{\mathop{\frac{1}{ #1}}\nolimits}
\newcommand{\expp}[1]{\mathop {\mathrm{e}^{ #1}}}

\newcommand{\II}[1]{\llbracket #1 \rrbracket}

\newcommand{\pot}{{\mathrm {U}}}
\newcommand{\psicr}{\psi_{\mathrm {crit}}}

\newcommand{\Lf}{{\rm Lf}}
\newcommand{\Sk}{{\rm Sk}}
\newcommand{\spine}{\llbracket 0, \infty  \rrparenthesis}

\newcommand{\lb}{[\![}
\newcommand{\rb}{]\!]}

\theoremstyle{plain}
\newtheorem{theo}{Theorem}[section]
\newtheorem*{theo*}{Theorem}

\newtheorem{cor}[theo]{Corollary}
\newtheorem*{cor*}{Corollary}
\newtheorem{prop}[theo]{Proposition}
\newtheorem{lem}[theo]{Lemma}
\newtheorem{defi}[theo]{Definition}
\theoremstyle{remark}
\newtheorem{rem}[theo]{Remark}

\author{Romain Abraham}
\address{
Institut Denis Poisson
Universit\'e d'Orl\'eans, Universit\'e de Tours, CNRS,
B.P. 6759,
45067 Orl\'eans cedex 2,
France.}
\author{Jean-Fran{\c c}ois Delmas}
\address{CERMICS, ENPC, Institut Polytechnique de Paris, CNRS,  Marne La Vall\'ee, France.}

\title{Coupling some conditioned  L\'evy trees with the Kesten tree}
\date{\today}

\begin{document}

\begin{abstract}
    We consider locally compact  Lévy trees conditioned to be large, with respect to different criterion: its height, its maximal ``size'' vertex and its total ``mass''. In the critical case, we provide a coupling with a truncated Kesten tree which then allows to directly prove the local convergence in distribution of the conditioned Lévy tree to be large towards the Kesten tree. We also consider the sub-critical and  super-critical cases. In the former case the results can be partial, due to a possible condensation phenomenon which is outside the mathematical framework  used in this paper. 
\end{abstract}
\subjclass[2010]{60J80, 60J45}

\keywords{Continuum random trees, Lévy tree, Kesten tree, coupling, conditioning, local limits}

\maketitle

\section{Introduction}

Local limits (in distribution) of Bienaym\'e-Galton-Watson (BGW) trees conditioned to be large have been extensively studied in the recent years, and various conditioning have been considered: on the height \cite{Kesten1986}, on the total population size  \cite{Aldous1998a}, on the total number of leaves \cite{Kortchemski2012}, see also \cite{Abraham2026} and references therein for other examples. When the BGW tree is critical (the mean value of the offspring distribution is 1), all these conditionings converge locally  to the same  limit: the size-biased tree also called Kesten tree. 
(Of course, not all conditioning of critical 
BGW tree converges locally to the Kesten tree, see~\cite{abd:fat}.) 

Scaling limits of BGW trees are given by random real trees called (continuum) Lévy trees, see~\cite{Duquesne2002}, and here denoted by $\ct$; and analogous results also hold for conditioned critical  L\'evy trees \cite{Lambert2007, Duquesne2009a}  and the limiting tree is the continuum analogue of the discrete Kesten tree. For convenience, we will still call this random real tree a Kesten tree in this paper and denote it by $\ck$; it is composed of one semi-infinite branch called the spine on which are grafted random real trees (whose distribution is closely related to the initial L\'evy tree, see Section~\ref{sec:kesten}). This tree appears first in the Brownian tree context in \cite{Aldous1991} and is called the the self-similar continuum random tree, and is constructed for a general branching mechanism $\psi$  using exploration processes in \cite{Duquesne2009a}.
Let us mention that the Lévy tree are given under the so-called excursion measure $\N^\psi$, and that the Kesten tree is a random variable defined under the probability measure $\P^\psi$, see Sections~\ref{sec:LT}-\ref{sec:kesten}. 

\medskip

The goal of this paper is to give a coupling between the L\'evy trees conditioned to be large and a truncation/transformation of the Kesten tree, so that we can read the convergence in distribution of 
the critical L\'evy trees conditioned to be large towards the Kesten tree as an a.s.\ (local) convergence of the truncated Kesten tree to the (untruncated) Kesten tree. 
We shall consider three kind of conditioning for  the Lévy tree $\ct$ to be large, that is, to have:
\begin{enumerate}[(i)]
    \item\label{it:h}  its height $\bH(\ct)$ equal to  $h$,
    \item\label{it:n}   a maximal vertex with ``size'' $\Delta(\ct)$ equal to   $\delta$,
    \item\label{it:m}  its  total ``mass'' $\sigma=\sigma(\ct)$ equal to $r$, 
\end{enumerate}
and then let this parameter goes to infinity. 

\medskip
A Lévy tree (resp. Kesten tree) represents the genealogy of a continuous state branching (CB) process (resp. CB with immigration process), so that the previous coupling could also have been stated for conditioned CB process and trunctated CB process with immigration; however the picture is much more intuitive when using the genealogical structure given by the tree. 
To state precisely the results, we consider Lévy trees as random variables taking values in the Polish metric space $(\T, \dlgh)$ of (Gromov-Hausdorff isometric classes of) rooted complete locally compact real trees. To do so requires some usual hypothesis on the branching mechanism $\psi$ associated with the Lévy tree or the corresponding CB process such as conservativeness (that is, non explosion in finite time) and Grey condition (that is, local compactness of the Lévy trees); we also assume the CB process has infinite variation and thus the total length of the Lévy tree is infinite 
(see Section~3 in~\cite{LeGall1998b} when this last hypothesis is not fulfilled). 

Some of the results for the critical case ($\psi'(0)=0$) have extension to the  sub-critical ($\psi'(0)>0$)
or super-critical ($\psi'(0)<0$) cases. 
Let us mention that a super-critical Lévy tree  $\ct$ conditioned to be finite is distributed as a sub-critical Lévy tree, 
see the Girsanov transformation~\eqref{eq:Girsanov-sigma} with $\psi$ the branching mechanism associated to $\ct$ and 
$\theta$ replaced by $\theta_0$ the largest root of $\psi=0$. 
Let us mention that there are other possible limits than the Kesten tree for local limits (in distribution) of      Lévy trees conditioned  to be large, see~\cite{adhe-2022}.

\subsection{Lévy trees conditioned to have large height}
Let us first consider the tree conditioned on having height $h$. To construct this tree from the Kesten tree $\ck$, we cut the spine of $\ck$ at level $h$ (and forget everything that is above this cut-point), and also remove all the sub-trees grafted on the remaining spine that reach level $h$. This gives a tree $\ckh$, whose construction is illustrated in Fig.~\ref{fig:height}. We denote by $h$ the vertex on the spine at distance $h$ from the root $\root$, and see it as a distinguished vertex of $\ckh$. 

\begin{figure}[ht]
 \includegraphics{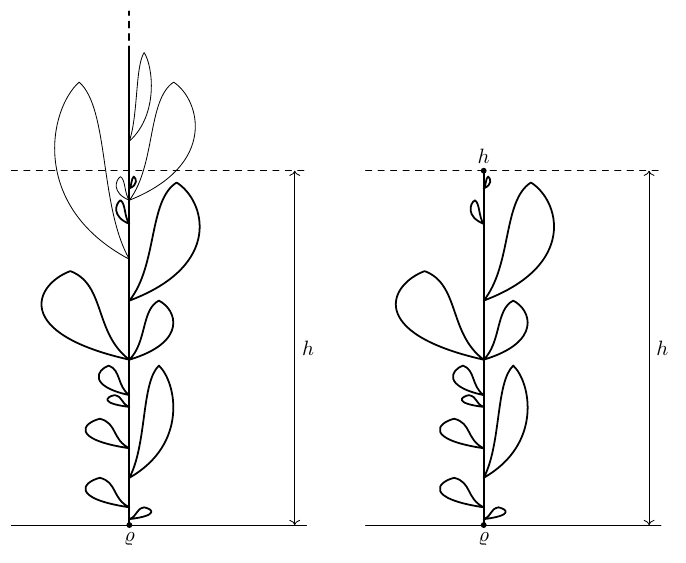}
     \caption{Left: the Kesten tree (a spine given by an semi-infinite branch on which are grafted Lévy sub-trees); Right: the truncated Kesten tree $\ck_h^{\rm height}$ with the distinguished vertex $h$ on the spine being the only vertex at distance $h$ from the root $\root$.}
    \label{fig:height}
\end{figure}

We summarize Theorem~\ref{theo:height}
on the coupling, and the local convergence of the truncated Kesten trees from Proposition~\ref{prop:height} and the direct consequence of the local convergence in distribution of the conditioned Lévy tree to have height $h$, say $\cth$,  towards the Kesten tree as $h$ goes to infitny, see Theorem~\ref{theo:cv-loc-height}. 

 \begin{theo*}[Conditioning w.r.t.\ the height]
We assume the infinite variation setting and the Grey condition. 
The (sub-)critical Lévy tree $\ct$ conditioned under the excursion measure to have height $h$, say $\cth$, with its only vertex $X_h$ at distance $h$ from the root $\root$, is distributed as $(\ckh,h)$:
\[
(\cth, X_h)
   \, \overset{\text{(d)}}{=} \,
   (\ckh,h).
\]
Furthermore 
 the ``increasing'' sequence $(\ckh)_{h\ge 0}$  converges  to the Kesten tree $\ck$ in $(\T, \dlgh)$. In particular, we have the following  local   convergence in distribution:
 \begin{equation}
   \label{eq:limTh-intro-height}
      \cth  \xrightarrow[h\rightarrow
   +\infty]{\text{(d)}}
      \ck
     \quad\text{in}\quad
  (\T, \dlgh).
 \end{equation}
 \end{theo*}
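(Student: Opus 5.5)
We prove the three assertions in turn: the distributional identity, the monotone almost-sure convergence $\ckh\to\ck$, and the local limit~\eqref{eq:limTh-intro-height}.

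\textbf{Distributional identity.} We use the Williams-type decomposition of $\ct$ under $\N^\psi$ along the ancestral line of a highest vertex. Under $\N^\psi[\,\cdot\mid \bH(\ct)=h]$ there is a.s.\ a unique vertex $X_h$ at distance $h$ from $\root$, and the tree is the segment $\lb\root,X_h\rb$ with sub-trees grafted on it. By the Poissonian decomposition of the Lévy tree along a branch (Bismut decomposition, using the exploration process and the Palm formula for the mass measure), conditionally on $\bH=h$ these grafted sub-trees form a Poisson point measure $\sum_i \delta_{(t_i,\ct_i)}$ on $[0,h]\times\T$. Its intensity is that of the Kesten spine, namely
\[
\rd t\,\Big(\beta\,\N^\psi[\rd\ct']+\int \pi(\rd r)\, r\, \P_r^\psi(\rd\ct')\Big),
\]
restricted to the event that the sub-tree grafted at level $t$ has height less than $h-t$. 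The only input needed is that the conditional law has a density with respect to the unconditioned spine decomposition, and this density factorizes as $\prod_i \ind_{\{\bH(\ct_i)<h-t_i\}}$. The normalisation is given by $\N^\psi[\bH\in\rd h]/\rd h=-\partial_h v(h)$, where $v(h)=\N^\psi[\bH>h]$ solves $v'=-\psi(v)$.

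On the Kesten side, the spine of $\ck$ carries, under $\P^\psi$, a Poisson point measure with exactly this unrestricted intensity. Cutting at level $h$ and removing the sub-trees $\ct_i$ with $\bH(\ct_i)\ge h-t_i$ is a thinning of a Poisson point measure. It therefore yields a Poisson point measure with the restricted intensity, so $(\ckh,h)$ has the conditional law.

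The main obstacle is the precise identification of the intensity after thinning. When $\psi'(0)>0$, the exponential factor $e^{-\psi'(0)t}$ present in the Kesten construction must match the Girsanov-type factor in the conditioned tree. I would check this first by comparing Laplace functionals, using $\N^\psi[1-e^{-\langle\ct,f\rangle};\bH<s]$ and the relation $\psi'(v(s))$. Only the critical and sub-critical cases are covered, as the statement requires.

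\textbf{Convergence of $(\ckh)_{h\ge0}$.} Fix $R>0$. The sub-trees grafted on the spine at levels $t\le R$ are locally finite in number above any height threshold. Moreover, each one is a.s.\ of finite height, since the Grey condition gives $v(s)<\infty$ and (sub)criticality gives $\N^\psi[\bH=\infty]=0$, hence extinction. So almost surely there exists $h_0$ such that for all $h\ge h_0$ no sub-tree grafted at a level $t\le R$ has height $\ge h-t$. More precisely, the number of such sub-trees has mean
\[
\int_0^R \big(\beta\text{-term and } \pi\text{-term evaluated at } v(h-t)\big)\,\rd t\;\longrightarrow\;0,
\]
and monotonicity in $h$ upgrades this to almost-sure eventual emptiness. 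Hence the restrictions to the ball of radius $R$ satisfy $r_R(\ckh)=r_R(\ck)$ for $h$ large. By the characterisation of $\dlgh$ through restrictions to balls, $\dlgh(\ckh,\ck)\to0$ almost surely.

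\textbf{Local limit.} Almost-sure convergence implies convergence in distribution. Combined with the identity $\cth\overset{(d)}{=}\ckh$ from the first step, this gives~\eqref{eq:limTh-intro-height}.
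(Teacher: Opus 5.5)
\textbf{There is a gap in the first step.} Your architecture matches the paper's. You represent $\cth$ by the Williams decomposition along $\II{\root,X_h}$, identify its grafting intensity with $\rd t\,\bN^\psi[\rd\ct;\,\bH(\ct)+t\le h]$, and conclude by Poisson thinning. For the convergence you use $\br_R(\ckh)=\br_R(\ck)$ for $h$ large together with \eqref{eq:rh(t)}. Your convergence argument and the final deduction are correct.

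The identification of the intensity is the whole content of the coupling, and you assert it rather than prove it. The justification you sketch is false. The law of $\ckh$ has no density with respect to the unconditioned spine decomposition, that is, the law of $\wck_h$, which is the marked leaf at level $h$ in \eqref{eq:Levy-Kh}. First, $\ell^h=0$ on $\{\bH(\ct)=h\}$. Second, and equivalently, the intensity you need to thin out is infinite. Since $\bN^\psi[\bH(\ct)>s]=\psi'(c(s))-\psi'(0)$, $\frac{\rd}{\rd s}\log\psi(c(s))=-\psi'(c(s))$ and $c(0+)=+\infty$,
\[
\int_0^h \bN^\psi[\bH(\ct)>h-t]\,\rd t=\int_0^h\bigl(\psi'(c(s))-\psi'(0)\bigr)\,\rd s=+\infty .
\]
Hence, $\P^\psi$-a.s., infinitely many subtrees grafted just below $h$ exceed level $h$. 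So $\prod_i\ind_{\{\bH(\ct_i)<h-t_i\}}=0$ a.s., and the two laws are mutually singular. No normalization by $\N^\psi[\bH\in\rd h]/\rd h$ fixes that. A density argument is valid only after localizing to the spine below $h-\eta$, where the removed intensity is finite, and then letting $\eta\downarrow0$; you do not carry this out.

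Two smaller slips. The quadratic coefficient must be $2\beta$ to match \eqref{eq:def-bN}. The factor $\expp{-\psi'(0)t}$ that you flag as the main obstacle does not occur in the Kesten tree of Definition~\ref{defi:kesten}; $\expp{-h\psi'(0)}$ is only a constant in \eqref{eq:Levy-Kh} and plays no role here.

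\textbf{How the paper closes this step.} It quotes the Williams decomposition in its explicit form from \cite{Abraham2009a}. There, the subtrees on $\II{0,h}$ form a Poisson measure with intensity
\[
\rd t\,\bigl(2\beta\N^\psi[\rd\ct;\,\bH(\ct)\le h-t]+\int r\expp{-rc(h-t)}\pi(\rd r)\,\P^\psi_{r,t}(\rd\ct)\bigr),
\]
where $\P^\psi_{r,t}$ is the law of a root carrying Poisson subtrees with intensity $r\N^\psi[\,\cdot\,;\bH\le h-t]$. The one real check is on the jump part. By the exponential formula and $\P^\psi_r(\bH\le s)=\expp{-rc(s)}$, one gets
\[
\expp{-rc(h-t)}\P^\psi_{r,t}(\rd\ct)=\P^\psi_r(\rd\ct;\,\bH(\ct)\le h-t),
\]
which is the computation \eqref{eq:compute!}. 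So the intensity is exactly $\bN^\psi[\rd\ct;\,\bH(\ct)+t\le h]$, and thinning gives $(\cth,X_h)\overset{\text{(d)}}{=}(\ckh,h)$. Inserting this verification, or carrying out the localized limiting argument, would complete your proof.
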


 The convergence~\eqref{eq:limTh-intro-height} was already proven in~\cite{Abraham2009a}, and the coupling was implicit. 
We refer to Remark~\ref{rem:super-crit-H} for the super-critical case. 

\subsection{Lévy trees conditioned to have large maximal vertex ``size''}
According to~\cite{Duquesne2002},
the   L\'evy  tree $\ct$  can  be
constructed using  a coding by the  so-called height process which  is a
functional  of a  spectrally positive  L\'evy process  $\bV$ with  Laplace
exponent $\psi$. Then, each 
jump of  the process $\bV$ corresponds to the ``size'' of a vertex of infinite degree in $\ct$. 
We refer to~\eqref{DefMas} in Section~\ref{sec:levy-tree}~(vi)
for a more
intrinsic way to define the ``size'' of a vertex as the limit, when $\varepsilon$ goes to $0$,  of the number of sub-trees
attached to this point with  height larger than  $\varepsilon$ with a correct renormalization. 

\medskip 

The  local
limit  in distribution of the  L\'evy tree conditioned  of having  maximal vertex  ``size''
$\delta$, say $\ctn$,   is  obtained  in  \cite{nassif2022} in  the  critical  and  sub-critical
cases under further assumption on the Lévy measure $\pi$ of the branching mechanism. To simplify, in the introduction, we shall consider the critical case and assume that $\pi$ has no atom   and support $(0, +\infty)$ in $(0, +\infty)$. 
Then the tree $\ctn$ has exactly one node of ``size'' $\delta$, say $X_\delta$, and we denote by $\ctnr$  the tree  $\ctn$ when one remove the sub-tree above $X_\delta$. 
We now provide  a representation of $\ctnr$
as a truncation of the Kesten tree. 
Let $H_\delta$
be  the vertex on the spine of the Kesten tree $\ck$ 
being the lowest  branching point of the spine on  which is grafted a
tree with a vertex of ``size''  $\ge \delta$ (which is possibly its root which belongs to the spine). Then we cut the spine of $\ck$ at  $H_\delta$,
and remove the sub-tree grafted at $H_\delta$ to get the tree $\cknr$. Then, we built the tree $\ckn$ by grafting at $H_\delta$ in $\cknr$ a sub-tree whose root is a branching point with ``size'' $\delta$ and which is distributed as the Lévy tree conditioned to have no other vertices of ``size'' $\ge \delta$. 
See Fig.~\ref{fig:node} for a representation of $ \ck$, $\ckn$ and $\cknr$. 

\medskip

\begin{figure}[ht]
    \centering
    \includegraphics{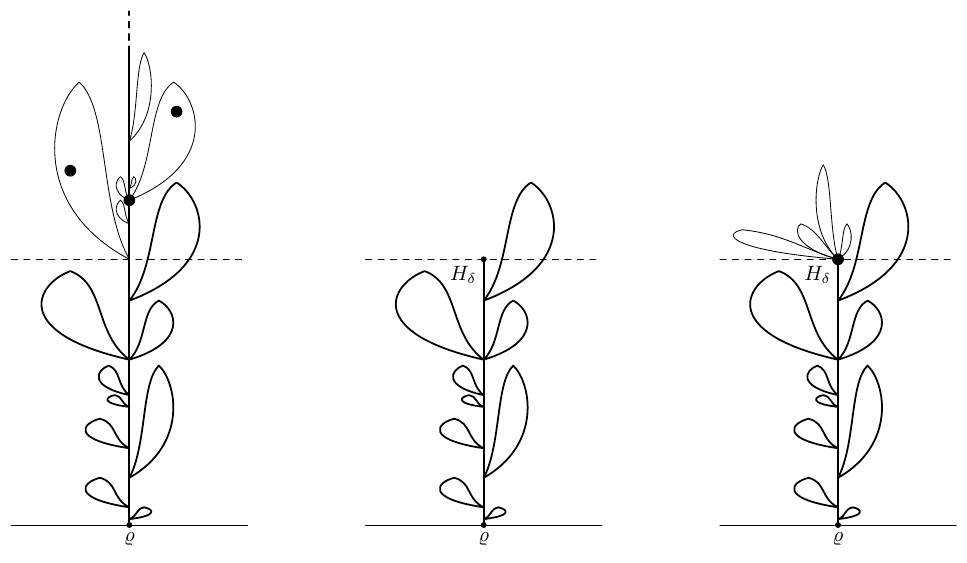}
     \caption{Left: the Kesten tree with vertices of ``size'' $\ge \delta$; Center: the truncated Kesten tree $\cknr$ with the distinguished vertex $H
     _\delta$ on the spine;
    Right: the tree $\ckn$, with an independent sub-tree grafted at $H_\delta$ whose root is of ``size'' $=\delta$  and which does not have any vertex of ``size'' $\ge \delta$.}
    \label{fig:node}
\end{figure}

We summarize Theorem~\ref{theo:Delta}
on the coupling, and the local convergence of the truncated Kesten trees from Proposition~\ref{prop:delta} and the direct consequence of the local convergence in distribution of the conditioned Lévy tree to have maximal vertex ``size'' equal to $\delta$, say $\ctn$,  towards the Kesten tree as $\delta$ goes to infitny, see Theorem~\ref{theo:cv-loc-size-vertex}.

 \begin{theo*}[Conditioning w.r.t.\ the maximal vertex ``size'']
We assume the infinite variation setting and the Grey condition, as well as the Lévy measure $\pi$ has no atom and full support in $(0, + \infty)$. 
The (sub-)critical Lévy tree $\ct$ conditioned under the excursion measure to have maximal vertex ``size'' $\delta$, say $\ctn$, with its only vertex $X_\delta$ of ``size'' $\delta$
is distributed as $(\ckn,H_\delta)$:
\begin{equation}
    \label{eq:coupling-intro-node}
(\ctn, X_\delta)
   \, \overset{\text{(d)}}{=} \,
   (\ckn, H_\delta)
   \quad\text{and}\quad
   \ctnr
   \, \overset{\text{(d)}}{=} \,
\cknr.
\end{equation}
Furthermore in the critical case, 
 the ``increasing'' sequence $(\cknr)_{h\ge 0}$  converges  to the Kesten tree $\ck$ in $(\T, \dlgh)$, and we have the following  local   convergence in distribution:
 \begin{equation}
   \label{eq:limTh-intro-node}
      \ctn  \xrightarrow[\delta \rightarrow
   +\infty]{\text{(d)}}
      \ck
     \quad\text{in}\quad
  (\T, \dlgh).
 \end{equation}
 \end{theo*}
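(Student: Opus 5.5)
We prove the coupling \eqref{eq:coupling-intro-node} by decomposing $\ct$ under $\N^\psi$ along the ancestral line of a vertex of large ``size'', and comparing with the spine decomposition of $\ck$. The local convergence then follows from an almost sure statement about $H_\delta$.

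\textbf{Step 1: the Lévy tree side.} Let $\ct$ be the Lévy tree under $\N^\psi$ and let $X_\delta$ be its vertex of maximal ``size''. Since $\pi$ has no atom, this vertex is a.e.\ unique on $\{\Delta(\ct)\ge\delta\}$. We apply the Palm formula (Bismut-type decomposition) for the nodes of infinite degree. For a nonnegative functional $F$,
\[
\N^\psi\Big[\sum_{x\ \text{node}} F(\ct,x)\Big]=\int_0^\infty \rd a\int \pi(\rd s)\, \E\big[F(\ct^{(a,s)},x_a)\big].
\]
Here $\ct^{(a,s)}$ is built from a branch $[\root,x_a]$ of length $a$. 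Along this branch we graft the points of a Poisson measure with intensity $2\beta\,\rd t\,\N^\psi[\rd\ct']+\rd t\int\pi(\rd r)\,r\int_0^1 \rd u\,\P_{ur}^\psi[\rd\ct']$, where $\P^\psi_{x}$ denotes the Lévy forest started from mass $x$. At $x_a$ we graft an independent Lévy forest with mass $s$. We take $F=G\,\ind_{\{x\ \text{is the unique node of maximal size}\}}$ and condition on $s=\delta$, which amounts to disintegrating in $s$ against $\pi(\rd s)$. The constraint becomes two requirements: the grafted trees along the branch have no node of size $\ge\delta$, and the spine points themselves carry no jump of size $\ge\delta$; and the forest at $x_a$ with mass $\delta$ has no node of size $\ge\delta$. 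By the exponential formula, the first requirement contributes the weight $\exp\big(-\int_0^a \Phi_\delta\big)=\expp{-a\Phi_\delta}$, where $\Phi_\delta$ is the intensity of the ``bad'' grafts. We compute it explicitly from $\N^\psi[\Delta\ge\delta]=:c_\delta$ and from the tail of $\pi$; it should equal $\psi'(c_\delta)-\psi'(0)$ plus a term coming from $\pi([\delta,\infty))$, suitably arranged.

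\textbf{Step 2: the Kesten tree side.} In $\ck$, the grafts along the spine form a Poisson measure with intensity $\rd t\,(2\beta\N^\psi+\int\pi(\rd r)\,r\int_0^1\rd u\,\P^\psi_{ur})$. This is the same intensity as above, and it has total rate $\psi'(0)=0$ in the critical case. The height $H_\delta$ is the first atom of the thinned Poisson process of bad grafts, so $H_\delta$ is exponential with parameter $\Phi_\delta$. Before $H_\delta$, the grafts are the Poisson grafts conditioned to be good. The tree $\cknr$ is therefore a branch of length $H_\delta$ carrying good grafts, and $\ckn$ adds an independent forest of mass $\delta$ conditioned to have no node of size $\ge\delta$. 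In the sub-critical case, the spine intensity carries the size-bias correction $\expp{-\psi'(0)t}$, which adds $\psi'(0)$ to the rate. Comparing with Step 1, both laws have the form $\text{const}\cdot \rd a\, \expp{-a(\Phi_\delta+\psi'(0))}\times(\text{same conditioned grafts})$. Normalizing then gives \eqref{eq:coupling-intro-node}. The statement $\ctnr\overset{(d)}{=}\cknr$ follows by deleting the subtree above the distinguished vertex.

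\textbf{Main obstacle.} The delicate point is Step 1: justifying the disintegration with respect to $s=\delta$ for every $\delta$, rather than only for a.e.\ $\delta$, and identifying the exponential rate exactly. This requires continuity in $\delta$ of the laws involved, which is where the absence of atoms and the full support of $\pi$ are used. I would first prove the identity integrated against test functions of $\delta$, then upgrade it using the continuity of $\delta\mapsto c_\delta$ and of the conditioned forest laws.

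\textbf{Step 3: convergence.} In the critical case, $\Phi_\delta\to0$ as $\delta\to\infty$, because $c_\delta\to0$ and $\pi([\delta,\infty))\to0$. Moreover, on a single $\ck$ the heights $H_\delta$ are nondecreasing in $\delta$, so $H_\delta\to\infty$ a.s. For every $R$, the ball of radius $R$ of $\cknr$ eventually coincides with that of $\ck$. Indeed, only finitely many grafts below height $R$ reach outside a given ball, and each has bounded node sizes by local compactness. Hence $\cknr\to\ck$ a.s.\ in $\dlgh$. Finally, $\ctn$ and $\ctnr$ agree inside the ball of radius $H_\delta$ (up to the grafted part at distance $\ge H_\delta$), so \eqref{eq:limTh-intro-node} follows.
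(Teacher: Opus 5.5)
Your argument is correct once a few formulas are fixed, but it takes a genuinely different route from the paper. You decompose $\ct$ under $\N^\psi$ at its node of maximal size using a Palm formula for infinite branching points. In the paper's notation this formula reads
\[
\N^\psi\Big[\sum_{x\in \Br_\infty(\ct)} F(\ct,x)\Big]
=\int_0^\infty \rd a\, \expp{-a\psi'(0)} \int_{(0,\infty)} \pi(\rd s)\,
\E^\psi\big[F\big(\wck_a\circledast(\ct_{[s]},a),\,a\big)\big],
\]
with $\ct_{[s]}\sim\P^\psi_s$ independent of $\ck$. This formula is not in the paper; it comes from the exploration process, via the compensation formula for the jumps of the coding Lévy process together with the leaf Palm formula. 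Restricting to ``all other nodes have size $<s$'' and thinning the grafts gives two things at once:
\begin{itemize}
\item the law $\N^\psi[\Delta\in\rd\delta]=\pi(\rd\delta)\,\expp{-\delta\N^\psi[\Delta\ge\delta]}/(\psi'(0)+\bN^\psi[\Delta\ge\delta])$;
\item the conditional law $(\ckn,H_\delta)$, for $\N^\psi[\Delta\in\rd\delta]$-a.e.\ $\delta$.
\end{itemize}

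The paper never uses a node Palm formula. It starts from Nassif's explicit leaf-marked version~\eqref{eq:EGTd}--\eqref{eq:Tnode*} and turns $\N^\psi[\ind_{\{\Delta\le\delta\}}\,\cdot\,]$ into $\N^\phi$ by~\eqref{eq:(4.2)}. It then applies the leaf Palm formula of Corollary~\ref{cor:TX=Kh} to the sub-critical $\phi$. Finally it identifies the result with $\cknr$ through $\bN^\phi=\bN^\psi[\,\cdot\,;\Delta<\delta]$ and $\phi'(0)=\psi'(0)+\bN^\psi[\Delta\ge\delta]$.

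What the paper's route buys: it works for each fixed $\delta$ with no version issue, and it exhibits $\cknr$ as a sub-critical Kesten tree cut at an exponential height. It does rely on Nassif's Theorem~5.7 to know that this version disintegrates $\N^\psi$.

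What your route buys: it proves the disintegration and the law of $\Delta$ in one stroke, independently of Nassif. The price is an external formula and the a.e.-to-everywhere upgrade you flag. That upgrade does work:
\begin{itemize}
\item for bounded $G$, $\delta\mapsto\E[G(\ctnr,X_\delta)]$ is continuous by dominated convergence in~\eqref{eq:EGTd}, because $\N^\psi[\Delta=\delta]=0$;
\item $\delta\mapsto\E[G(\cknr,H_\delta)]$ is continuous, because a.s.\ $H'_\delta$ is locally constant around any fixed $\delta$;
\item full support of $\pi$ makes the support of $\N^\psi[\Delta\in\cdot\,]$ all of $(0,\infty)$.
\end{itemize}
The upgrade can also be bypassed entirely. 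Apply~\eqref{eq:Levy-Kh} directly to~\eqref{eq:EGTd}, with $\ind_{\{\Delta\le\delta\}}$ absorbed into $G$, using $\Delta(\wck_a)=\max_{h_i\le a}\Delta(\ct_i)$.

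Three points need correcting.

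\emph{The graft intensity.} On the spine of $\ck$, and on the branch in the node Palm formula, the intensity is $\bN^\psi=2\beta\N^\psi+\int r\,\pi(\rd r)\,\P^\psi_r$ (Definition~\ref{defi:kesten}). It is not $2\beta\N^\psi+\int\pi(\rd r)\,r\int_0^1\rd u\,\P^\psi_{ur}$. The latter is only one side of the spine in the exploration-process picture; the two sides, of sizes $ur$ and $(1-u)r$, glue together into $\P^\psi_r$.

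\emph{The factor $\expp{-a\psi'(0)}$.} It belongs in your Palm display. On the Kesten side it is not carried by the graft intensity, which has infinite mass, so ``total rate $\psi'(0)$'' is meaningless. It is carried by the independent exponential $E$ in $H_\delta=\min(H'_\delta,E)$, which your Step~2 must use in the sub-critical case.

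\emph{Step~3.} The limit $\bN^\psi[\Delta\ge\delta]\to0$ needs $\int_{[\delta,\infty)}r\,\pi(\rd r)\to0$, not only $\bar\pi(\delta)\to0$; this does hold in the (sub-)critical case. The local agreement is simply $\br_a(\cknr)=\br_a(\ckn)=\br_a(\ck)$ for $a<H_\delta$, so no finiteness or local-compactness argument is needed.
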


 The convergence~\eqref{eq:limTh-intro-node} was already proven in~\cite{nassif2022} and the sub-critical case is also considered therein. The coupling~\eqref{eq:coupling-intro-node}
 is also valid in the sub-critical case with a slightly different definition of the cutting vertex $H_\delta$, see Theorem~\ref{theo:Delta}; and in this case the local limit, when $\delta$ goes to infinity, of $\ctnr$ is the truncated Kesten tree $\wck_{E}$ whose spine is cut at vertex $E$ at distance from the root $\root$ given by an independent exponential random variable with mean $1/\psi'(0)$. 
Intuitively, the limit, when $\delta$ goes to infinity, of $\ctn$ 
in this case, would consist of the compact tree $\wck_{E}$ on which is grafted at $E$ a (non locally compact) 
Lévy sub-tree whose root is a branching point with infinite ``size'' (which implies the non local compactness). This is the so-called condensation phenomenon.
The super-critical case can be deduce from the sub-critical case by the Girsanov transformation~\eqref{eq:Girsanov-sigma} (with $\theta$ the positive root of $\psi=0$), as on the non-extinction event the maximal vertex ``size'' is $+\infty$ as the support of the Lévy measure $\pi$ is unbounded, and thus conditioning on the maximal vertex ``size'' to be  $\delta$ implies the extinction of the super-critical Lévy tree. 

\subsection{Lévy trees conditioned to have large total mass}
 Eventually, let us recall that a Lévy tree is naturally endowed with a mass measure supported by its leaves, see Section~\ref{sec:levy-tree}~(iv). Let us denote by $\sigma$ the total mass of this measure, which can also be seen as the life-time of the excursion of the height process that codes the Lévy tree. 

To simplify, we consider the critical case ($\psi'(0)=0$). In this case, the coupling is less straightforward. Consider the Kesten tree $\ck$, and denote by $(h_i, \sigma_i, \ct_i)_{i\in I}$ the height $h_i$ at which the sub-tree $\ct_i$ is grafted on the spine, and $\sigma_i$ denote its mass, see Fig.\ref{fig:mass1}. The process $\bS^\ck$ defined $S^\ck_h=\sum_{i\in I}  \sigma_i \ind_{\{ h_i \leq h\}}$ is then a subordinator started at $0$ with Laplace exponent $\varphi=\psi'\circ \psi^{-1}$, see Section~\ref{sec:Levy-cond-mass}. For simplicity, we write $\bS$ for $\bS^\ck$. 

\begin{figure}[ht]
    \centering
    \includegraphics{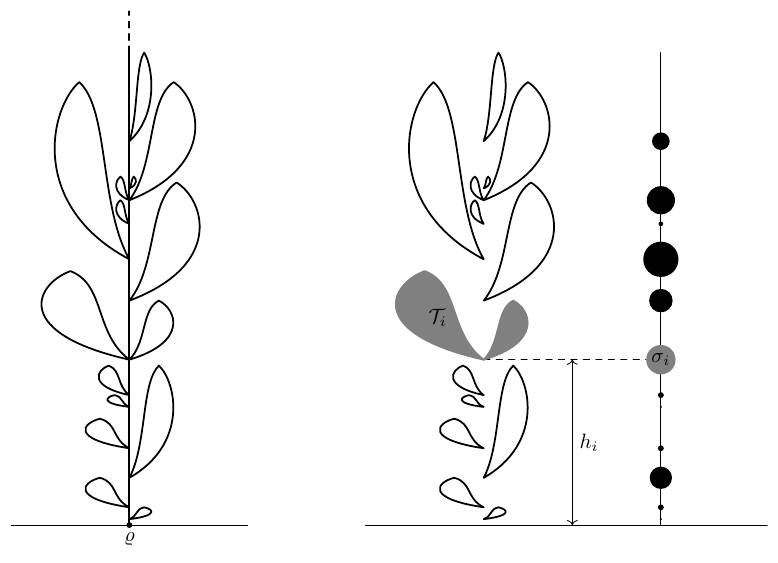}
     \caption{Left: the Kesten tree;  Center: the sub-trees grafted on the spine, with in gray an instance of a tree $\ct_i$ (with the root being a branching point) grafted at height $h_i$ with mass $\sigma_i$; Right: mass of the grafted trees.}
    \label{fig:mass1}
\end{figure}

The idea, is to consider the subordinator $\bS$ started at $0$ and conditioned to die continuously at level $r$, which we denote by $\bS^r$ and write $\zeta^r$ for its lifetime (in particular $S_{\zeta^r-}^r=r$ a.s.). Following~\cite{krs:csemp}, provided the potential of $\bS$ has a continuous density, say $u$, w.r.t.\ the Lebesgue measure on $(0, +\infty)$, that is, $\int_{\R^+} \P(S_t \le x) \, \rd t=\int _{(0, x]} u(y)\, \rd y$ for all $x>0$ and $u$ is continuous, then the process $\bS^r$ is well defined and with  $\P^\varphi$ and $\P ^{\varphi,r}$ denoting the distribution of $\bS$ and $\bS^r$, we get for all $h\ge 0$ and $F$ a non-neagtive measurable functional: 
\[
  \E^{\varphi,r}\left[ \ind_{\{h<\zeta^r\}}\, F(S^r_{[0, h]})\right]= \E^\varphi\left[
\frac{u(r-S_h)}{u(r)} \ind_{\{ S_h <  r\}}\,  F(S_{[0, h]})\right].
\]
Let us mention that $u(r)/r$ is also the density w.r.t.\ the Lebesgue measure of the total mass of the Lévy tree under the excursion measure $\N^\psi$, see~\eqref{eq:r*pi=u}.

From the process $\bS^r$, we build a modified truncated Kesten tree, say $\ckm$,  by considering a spine of length $\zeta_r$ on which we graft at height $h_j$ an independent  Lévy tree $\ct_j$ with total mass $\sigma_j$, where $(h_j, \sigma_j)_{j\in J}$ are the jumping times and jums of the process $\bS^r$. In particular the total mass of $\ckm$ is given by $\sum_{j\in J} \sigma_j=S_{\zeta_r-}^r=r$. 
(We stress the distribution of $\ct_j$ conditionally on $\sigma_j=r$ is the conditional distribution of a grafted tree $\bN^\psi[\rd \ct \, |\, \sigma=r]$ with the grafting measure $\bN$ defined in~\eqref{eq:def-bN}.)
The tree $\ckm$ is not a functional of the Kesten tree. However, in the stable case $\psi (\lambda)=\lambda^a $ with $a\in (1, 2]$, 
one could use a scaling argument to recover the jumps $(\sigma_j)_{j\in J}$
from the jumps $(\sigma_i)_{i\in I}$, see~\cite{cub:bridges-11}, and then use that conditionally on $\sigma_j$
the tree $\ct_j$ is distributed as a scaled normalized random tree $\ct'$ with total mass 1. We also recall that in the stable case one can get a regular version of $\ctm$
the Lévy tree conditioned to have total mass $r>0$ by scaling, see Section~3.3 in~\cite{Duquesne2002}.

\medskip

We get from Corollary~\ref{cor:mas} that 
$\ctm$, the Lévy tree conditioned to have total mass $r>0$, is distributed as $\ckm$ (in fact the distribution $\ckm$ provide a regular version of the distribution of $\ctm$), and from Proposition~\ref{prop:mass} (see also 
Theorem~\ref{theo:cv-loc-mass-crit}) we get  the local convergence in distribution of $\ctm$   towards the Kesten tree as $r$ goes to infinity when the potential density is non-increasing (this relies on the Fatou lemma and the fact that $\liminf_{r\rightarrow \infty} \frac{u(r-S_h)}{u(r)} \ind_{\{ S_h <  r\}} \geq 1$ which is a direct consequence of $u$ being non-increasing).

 \begin{theo*}[Conditioning w.r.t.\ the total mass]
We assume the infinite variation setting and the Grey condition, as well as the existence of continuous density on $(0, +\infty)$, say $f_\sigma$, of the total mass under the excursion measure $\N^psi$ such that $r\mapsto r f_\sigma(r)$ is non-increasing. 
The critical Lévy tree $\ct$ conditioned under the excursion measure to have total mass $r$, say $\ctm$, 
is distributed as $\ckm$:
\begin{equation}
    \label{eq:coupling-intro-mass}
\ctm
   \, \overset{\text{(d)}}{=} \,
   \ckm, 
\end{equation}
and 
 the sequence $(\ctm)_{r> 0}$  converges  to the Kesten tree $\ck$ in $(\T, \dlgh)$, and we have the following  local   convergence in distribution:
 \begin{equation}
   \label{eq:limTh-intro-mass}
      \ctm  \xrightarrow[r \rightarrow
   +\infty]{\text{(d)}}
      \ck
     \quad\text{in}\quad
  (\T, \dlgh).
 \end{equation}
 \end{theo*}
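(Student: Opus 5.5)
The approach is to compute the law of $\ctm$ through the spinal (Bismut-type) decomposition of the Lévy tree under $\N^\psi$, and to identify it with the law of $\ckm$ via the $h$-transform formula for $\bS^r$ recalled above.

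\emph{Step 1 (distribution of the total mass).} By the many-to-one / Bismut formula, pick a leaf according to the mass measure and look along the branch from $\root$ to that leaf. For non-negative $G$ this gives
\[
\N^\psi\Big[\int \mathbf{m}(\rd x)\, G(\ct,x)\Big]=\int_0^\infty \rd a\, \E^\psi\big[G(\ck_a,a)\big],
\]
where $\ck_a$ is the Kesten tree with spine cut at height $a$. In the critical case there is no exponential factor $\expp{-\psi'(0)a}$. Taking $G=g(\sigma)$ gives
\[
\N^\psi[\sigma\, g(\sigma)]=\int_0^\infty \rd a\,\E^\varphi\big[g(S_a)\big],
\]
so $r f_\sigma(r)=u(r)$, the potential density of $\bS$; this is \eqref{eq:r*pi=u}.

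\emph{Step 2 (disintegrating at mass $r$).} Apply the same formula with $G=F(\ct)\,g(\sigma)/\sigma$. Then disintegrate the right-hand side on $\{S_a\in\rd r\}$. Conditionally on the spine, the grafted trees are independent Poisson atoms with intensity $\bN^\psi$. Disintegrating each atom on its mass, as in the definition of $\ckm$, expresses the law of $\ct$ given $\sigma=r$ as
\[
\frac{1}{u(r)}\int_0^\infty \rd a\; \E^\varphi\big[\,\cdots;\ S_a\in \rd r\,\big]/\rd r .
\]
The right-hand side is exactly the law of a spine of length $a$ decorated by mass-conditioned subtrees, weighted by the bridge density of $\bS$ at level $r$. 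This should be recognized as the law of $\bS^r$ with its lifetime $\zeta^r$ integrated out. To check it, use the $h$-transform formula: for $h<\zeta^r$ the density on $\mathcal F_h$ is $u(r-S_h)/u(r)$, and $u(r-S_h)=\int \rd a'\,$(density of $S_{h+a'}-S_h$ at $r-S_h$) by the Markov property. Identifying the finite-dimensional marginals in $h$ then gives $\ctm\overset{(d)}{=}\ckm$ for a.e. $r$. Continuity of $u$ makes $r\mapsto$ law of $\ckm$ a regular version valid for every $r$. This is the content of Corollary~\ref{cor:mas}.

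\emph{Step 3 (local convergence).} Fix $h$ and a bounded continuous functional $F$ of the tree truncated at height $h$. Up to the event $\{h\ge \zeta^r\}$, the restriction of $\ckm$ below height $h$ has the law given by the density $u(r-S_h)\ind_{\{S_h<r\}}/u(r)$ with respect to $\ck$ restricted below height $h$. The grafted subtrees are distributed as under $\ck$ once their masses are unconditioned; this is where the grafting of $\bN^\psi[\cdot\,|\,\sigma]$ trees recovers the Poisson structure. Since $u$ is non-increasing,
\[
\liminf_{r\to\infty} \frac{u(r-S_h)}{u(r)}\,\ind_{\{S_h<r\}}\ge 1 .
\]
Fatou's lemma then gives $\liminf_r \E[F(\ckm|_h)]\ge \E[F(\ck|_h)]$ for every non-negative $F$. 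Applying this to $F$ and to $\|F\|_\infty-F$ (both sides are probabilities) yields equality of the limits. Here one also uses $\P(\zeta^r\le h)\to0$, which follows by taking $F=1$. Truncation at height $h$ is continuous for $\dlgh$, so this gives \eqref{eq:limTh-intro-mass}.

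The main obstacle is Step 2: the rigorous disintegration, i.e. checking that the conditional law of the decorated spine given total mass $r$ matches the $h$-transformed subordinator with mass-conditioned grafts, including the issue that $\zeta^r$ is not a stopping time for $\bS$. I would handle this by testing against functionals of $(S_{[0,h]},$ trees grafted below $h)$ on $\{h<\zeta^r\}$ and using the Markov property at time $h$.
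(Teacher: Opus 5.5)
Your proposal is correct and follows essentially the paper's route: you obtain $rf_\sigma(r)=u(r)$, then use the spinal (Bismut) decomposition along a uniformly chosen leaf, tested on $\{\bH(U^r)>h\}$ against functionals of $\wbr(\ctm,U^r_h)$ and matched with the $h$-transform \eqref{eq:cond-S-die} through the Markov property at time $h$ (Theorem~\ref{theo:mass} and Corollary~\ref{cor:mas}), and you finish with Fatou's lemma for non-increasing $u$ plus the complement trick (Proposition~\ref{prop:mass}). The paper does the identification through a Laplace transform in $r$ with exponential functionals $G$ instead of your direct disintegration. In either version, drop the heuristic use of a density of $S_{h+a'}-S_h$, which is not available under the hypotheses, and integrate against $g(r)\,\rd r$ so that only the potential density $u$ appears; likewise, the passage from truncations to $\dlgh$-convergence rests on \eqref{eq:rh(t)}, not on continuity of $\br_h$.
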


Using a Girsanov transformation, we have a similar result for the super-critical case, see Corollary~\ref{cor:cv-loc-mass-generic}. The sub-critical case is more delicate: in the generic case (where one can extend the functionn $\psi$ to negative values such that there exists a root $\theta^*$ of $\psi'=0$), then one can get a similar result, see Corollary~\ref{cor:cv-loc-mass-generic}. In the non-generic case, one expect to have a condensation phenomenon similar to what is observed in the discrete case, 
see~\cite{js11,Janson2012,Abraham2014a}.
But in the continuous setting the formalism is more delicate 
(as the conjectured limiting tree is no more locally compact)
and the proof of the condensation is still an open question.

\section{Notations}

\subsection{Real trees}
\label{sec:real-tree}
We refer to \cite{Evans, lg:rta, j:rt} for a general presentation of
random real  trees.  Informally,  real trees  are metric  spaces without
loops,  locally  isometric   to  the  real  line.    More  precisely,  a
(non-empty) metric  space $(\bt,d)$  is  a  real  tree if  the  following
properties are satisfied:
\begin{enumerate}
	\item For every $x,y\in \bt$, there is a unique isometric map $f_{x,y}$
from $[0,d(x,y)]$ to $\bt$ such that $f_{x,y}(0)=x$ and $f_{x,y}(d(x,y))=y$.
	\item For every $x,y\in \bt$, if $q$ is a continuous injective map from
$[0,1]$ to $\bt$ such that $q(0)=x$ and $q(1)=y$, then
$q([0,1])=f_{x,y}([0,d(x,y)])$.
\end{enumerate}
If $x,y\in  \bt$, we will note  $\II{x,y}$ the range of  the isometric map
$f_{x,y}$   described  above   and  $\llbracket   x,y  \llbracket$   for
$\llbracket x,y \rrbracket \backslash  \{y\}$.  The degree $\kappa_x(\bt)$ of
$x\in \bt$ is the number  of connected components of
$\bt\setminus\{x\}$, and we simply write $\kappa_x$ for $\kappa_x(\bt)$
when there is no ambiguity on the tree $\bt$. We
shall consider  the set of leaves  $ \Lf (\bt)=\{x\in \bt,\,  \kappa_x= 1\}$
(with the convention that $\Lf(\bt)=\bt$ if  $\bt$ is reduced to one element),
the set of  branching vertices  $\Br(\bt)=\{x\in \bt, \,  \kappa_x\geq 3\}$ and
the        set        of         infinite        branching        vertices
$\Br_\infty(\bt) = \{  x\in \bt,\, \kappa_x = \infty \}  $.  The skeleton of
$\bt$  is   the  set   of vertices   in  the   tree  that   aren't  leaves:
$\Sk(\bt)=\bt\backslash \Lf (\bt)$.  A real tree $\bt$ is discrete if the set of
leaves   and   branching   points     is   locally   finite,   that   is,
$\{y\in \Lf(\bt)\cup \Br(\bt);  d(x,y)\leq a\}$ is finite for  all $a\geq 0$
and $x\in \bt$.  If $\bt$ is separable, then the set of leaves $\Lf(\bt)$ is a
Borel subset of  $\bt$, and there exists a unique  measure $L ^\bt$ on $\bt$ (endowed
with the Borel $\sigma$-field), called the length measure, such that:
\[
L^{\bt}(\Lf (\bt)) = 0
\quad\text{and}\quad
 L^{\bt}(\llbracket x,y
 \rrbracket)=d(x,y)
 \quad\text{for all $ x,y\in \bt$;}
\]
and furthermore the length measure is $\sigma$-finite. 
If the  real tree $(\bt,d)$ is complete and locally compact,
then any bounded closed set is compact by the Hopf-Rinow therorem, and
thus the real tree $\bt$ is a Polish metric space and thus separable.

We say that  $(\bt,d,\root)$ is a rooted real tree  with root $\root\in \bt$
if $(\bt,d)$  is a real tree  and $\root$ is a  distinguished vertex.  Let
$(\bt,d,\root)$  be  a  rooted  real  tree.  We define its height by:
\[
  \bH(\bt)=\sup_{x\in \bt}
  d(\root,x).
  \]
For every $x\in \bt$, $\lb\root ,x\rb$ is
interpreted as the ancestral line of the vertex $x$ in the tree. We define
a partial order on $\bt$ by setting $x\preccurlyeq y$ ($x$ is an
ancestor of $y$) if $x\in\lb \root,y\rb$.
For a non empty subset  $\bs\subset \bt$, there exists a unique $z\in \bt$, called the Most Recent
Common Ancestor (MRCA) of $\bs$  such that $\lb
\root,z\rb= \bigcap_{x\in \bs} \lb\root,x\rb$.
We simply write 
$x\wedge y$ for the MRCA of $x$ and $y$ (that is, of  $\bs=\{x,y\}$).

We shall consider  the infinite branch, denoted $\spine$,  as the rooted
real  tree  $(\R_+, d,  \root=0)$,  where  $d$  is the  usual  Euclidean
distance.

We  say  $(\bt,d,  \root,  x)$,  is  a  pointed  rooted  real  trees  if
$(\bt,d, \root)$ is a rooted real tree and $x\in \bt$ is a distinguished
vertex.  We shall consider the particular  case of the tree reduced to a
branch with one extremity of the branch being the root and the other the
pointed element; more precisely, for $h\geq 0$, we denote by $\II{0, h}$
the pointed rooted real tree $([0,h],  d, \root=0, h)$, where $d$ is the
usual Euclidean distance.

For simplicity, we shall write $\bt$ for a rooted real tree, and denote
by  $d^\bt$ its  distance  and by  $\root^\bt$  its  root.

\subsection{Grafting procedure}
\label{sec:graft}
We  recall the grafting procedure where  we  add (graft)
  rooted  real  trees   on  an  existing   rooted  real
trees. More  precisely, let $\bt$ be  a  rooted real  tree and let
$(x_i)_{i\in  I}$ and $(\bt_i, \root^{\bt_i},d^{\bt_i})_{i\in  I}$ be  a 
family respectively of vertices of $\bt$ and of
  rooted  real tree. 
We  set  $T  =  \bt \sqcup  \left(
  \bigsqcup_{i\in  I} \bt_i\backslash\{\root^{\bt_i}\}  \right)  $ where
the symbol $\sqcup$  means that we consider the disjoint union of the
sets $(\bt_i)_{i\in I}$ and $\bt$.  We set $\root^{T}=\root^\bt$. The set $T$ is
endowed with the following metric $d^{T}$: if $s,t\in T$,
\begin{equation*}
d^{T} (s,t) =
\begin{cases}
d^\bt(s,t)\ & \text{if}\ s,t\in \bt, \\
d^\bt(s,x_i)+d^{\bt_i}(\root^{\bt_i},t)\ & \text{if}\
s\in \bt,\ t\in \bt_i\backslash\{\root^{\bt_i}\} , \\
d^{\bt_i}(s,t)\ & \text{if}\ s,t\in \bt_i\backslash\{\root^{\bt_i}\} ,\\
d^\bt(x_i,x_j)+d^{\bt_j}(\root^{\bt_j},s)+d^{\bt_i}
(\root^{\bt_i},t)\
& \text{if}\ i\neq j \ \text{and}\ s\in \bt_j\backslash\{\root^{\bt_j}\} ,\ t\in
\bt_i\backslash\{\root^{\bt_i}\} .
\end{cases}
\end{equation*}
It is clear that
$(T,d^{T},\root^{T})$ is a  rooted real tree, and we
denote it by $T$ for simplicity and 
 use the following notation for the grafted tree:
\begin{equation}
T=\bt \circledast_{i\in I}(\bt_i,x_i) .
\label{def:gref}
\end{equation}
This construction can be easily extended to the case where $\bt$ is a
pointed rooted real tree with distinguished vertex $x\in \bt$,
which is also seen as a distinguished vertex of $T$.

\subsection{Sub-trees above/below a given level}
\label{sec:below/above-level}
Let $(\bt, d, \root)$ be a rooted real tree. Let
$h>0$. We  define the restriction  map $\br_h(\bt)$ which is  the sub-tree
below level $h$:
\begin{equation}
   \label{eq:def-rh}
  \br_h(\bt)=\{x\in \bt, \, d(\root,x)\le h\},
\end{equation}
and still denote  by $\br_h(\bt)$ the corresponding rooted  real tree with
root $\root$ and distance given by the restriction of $d$ to $\br_h(\bt)$.
Let   $(\bt_i^{\circ})_{i\in   I}$   be  the   connected   components   of
$\bt\setminus   \br_h(\bt)$;  let us  denote   by  $\root_i$   the  MRCA  (in $\bt$) of
$\bt_i^{\circ}$  and set  $\bt_i=\bt_i^{\circ}\cup\{\root  _i\}$ which  we
consider as a rooted real tree with root  $\root_i$.  
By construction, we have:
\[
  \bt=\br_h(\bt) \circledast_{i\in  I}(\bt_i,\root_i) .
\]
We will  also consider the
point measure:
\begin{equation}
   \label{eq:def-cna}
\cn_h^\bt=\sum_{i\in I}\delta_{(\root_i,\bt_i)}.
\end{equation}
The support of the measure $\sum_{i\in I}\delta_{\root_i}$  is a subset
of:
\begin{equation}
   \label{eq:t(h)}
  \bt(h)=\{x\in \bt\, \colon\, d(\rho,x)=h\}.
\end{equation}

We shall also cut a tree by removing the sub-tree above one of its
vertices. For $x\in \bt$, we define:
\begin{equation}
   \label{eq:def-wbr}
  \wbr(\bt,x)=\{x\} \cup \{y\in \bt, \, x\not\in \llbracket \root ,y \rrbracket
  \}. 
\end{equation}
Notice that $\br_{d(\root, x)}(\bt)\subset  \wbr(\bt,x)$.
 As for
$\br_h(\bt)$, we still denote 
by $\wbr(\bt,x)$ the corresponding rooted  real tree with
root $\root$ and distance given by the restriction of $d$ to
$\wbr(\bt,x)$; according to the context, we might see $x\in \wbr(\bt,x)$
as a distinguished vertex.

\subsection{Polish spaces of  rooted real trees}
\label{sec:T-T1}
By the Hopf-Rinow
therorem, if  $(\bt, d)$ is a complete and locally compact
metric  real tree, then every closed bounded subset of $\bt$ is
compact.
According to~\cite{adhe-2022}, one can  define a Gromov-Hausdorff metric
$\dlgh$ on the space $\T$ of (GH-isometric classes of) rooted complete
 locally compact real trees.  Furthermore,  following~\cite[Theorem~2.9]{adh:ghptwms},
we get that the space $(\T, \dlgh)$ is a Polish metric space.
When there is no possible confusion,  we  shall  also simply write  $\bt$  for its equivalence class in $\T$.

 One can  define a  restriction map  $\br_h$ on  $\T$ which  is consistent
 with~\eqref{eq:def-rh};  then the  map $(h,  \bt) \mapsto  \br_h(\bt)$ is
 continuous       from       $\R_+\times        \T$       to       $\T$,
 see~\cite[Lemma~5.4]{adhe-2022}.   As a  trivial  application from  the
 definition  of  the  GH-distance  $\dlgh$  in~\cite{adhe-2022},  let  us
 mention that for $h\geq 0$ and $\bt, \bt'\in \T$:
\begin{equation}
   \label{eq:rh(t)}
\br_h(\bt)=\br_h(\bt') \, \Longrightarrow\,\dlgh(\bt, \bt')\leq \expp{-h}.
\end{equation}

Similarly, one can also define a Gromov-Hausdorff metric $\dlghu$ on the
space  $\T_1$  of  (GH-isometric  classes of)  pointed  rooted  complete
locally compact real trees so that it is a Polish metric space; we refer
again to~\cite[Section~5.3]{adhe-2022} for the precise definition of the
restriction map which is again continuous.  One can define a restriction
map $\wbr$  on $\T_1$  which is consistent  with~\eqref{eq:def-wbr} and,
following~\cite[Lemma~6.22]{adhe-2022},     get     that     the     map
$ (\bt,x) \mapsto \wbr(\bt,x)$ is measurable from $ \T_1$ to $\T$ (or to
$\T_1$ if one consider $x$ as a distinguished vertex of $ \wbr(\bt,x)$).
The grafting map $((\bt, x),  \bt') \mapsto T=\bt \circledast (\bt', x)$
is a continuous  map from $\T_1\times \T$  to $\T$ (or to  $\T_1$ if one
consider the pointed tree $(T,x)$), see~\cite[Lemma~5.13]{adhe-2022}.

 \medskip

 Grafting a countable family of trees is more delicate, and we shall not
 consider it in full generality.  Write $\{\root\}$ for the tree reduced
 to  its   root.   Based  on~\cite[Lemma~5.31]{adhe-2022},  we   get  in
 particular  that  if $\cm=\sum_{i\in  I}  \delta_{(h_i,  T_i)}$ is  a
 Poisson    point    measure   on    $\R_+    \times    \T$   with    intensity
 $\rd h  \, \nu(\rd T)$,  where $\nu$  is a $\sigma$-finite  measure on
 $\T$ such  that $\nu(\bH(T)=0)=0$  and $\nu(  \bH(T)>\varepsilon)$ is
 finite for all $\varepsilon>0$, then the random trees:
\[
 \tau= \spine \circledast_{i\in I} (T_i, h_i),
   \quad
  \II{0,h} \circledast_{i\in I_h} (T_i, h_i)
  \quad\text{and}\quad
 \{\root\}\circledast_{i\in I_h} (T_i, \root),
 \]
 where  $h\geq 0$  and $I_h=\{i\in  I\, \colon\,  h_i\leq h\}$,  are well
 defined      $\T$-valued      random     variables.       The      tree
 $ \II{0,h} \circledast_{i\in I_h} (T_i, h_i)$ with distinguished vertex
 $h$ is also a well defined $\T_1$-valued random variable.

 Consider a non-decreasing  sequence $(A_r)_{r\geq 0}$ of measurable subsets of
 $\R_+\times \T$ and the random tree:
 \[
   \tau_r= \spine \circledast_{i\in I_{A_r}} (T_i, h_i),
 \]
 with $I_{A_r} =\{i\in I\, \colon\,  (h_i, T_i)\in A_r\}$, which we call
 the restriction of the tree $\tau$ to $A_r$.

 In this setting, we shall rewrite the monotonicity of the restrictions as:
\begin{equation}
   \label{eq:conv-inclu}
   \tau_r\subset \tau_{r'}
   \quad\text{a.s.\ for $0\leq r<r'$.}
 \end{equation} 
 If $\nu(A^c)=0$ for  $A=\lim_{r\rightarrow \infty } A_r$,
 then we get from~\cite[Lemma~5.31]{adhe-2022}  that a.s.\ $\lim_{r\rightarrow\infty } \dlgh(\tau_r, \tau)=0$,
 which we shall write as:
\begin{equation}
   \label{eq:conv-lim}
  \lim_{r\rightarrow \infty } \tau_r \stackrel{\text{a.s.}}{=} \tau
  \quad\text{in}\quad
  (\T, \dlgh).
  \end{equation}

\subsection{Branching process}
\label{sec:levy-tree}

We
consider the  branching mechanism $\psi$:
\begin{equation}
   \label{eq:psi}
\psi(\lambda)=\alpha'\lambda+\beta\lambda^2
+\int_{(0,+\infty)}\left(\expp{-\lambda
    r}-1+\lambda r\ind_{\{r\leq 1\}}\right)\pi(\rd r)
\quad\text{for}\quad \lambda\geq 0
,
\end{equation}
where   $\alpha'$ and  the quadratic parameter  $\beta$ are real numbers, the
Lévy measure 
$\pi$ on $(0, +\infty )$ is Borel, and:
\begin{equation}
   \label{eq:hyp-param}\tag{H0}
   \boxed{   \alpha'\in \R, \quad \beta\ge 0
     \quad\text{and}\quad
 \int_{(0,+\infty)}(1\wedge
r^2)\, \pi(\rd r)<+\infty.}
\end{equation}
We shall consider for $r\geq 0$:
\begin{equation}
  \label{eq:def-pid}
\bar \pi (r)=\pi((r, +\infty )). 
\end{equation}

The corresponding L\'evy process 
$\bV=(V_t)_{t\geq 0}$ is a process on $\R$ with independent and stationary
increments characterized by $V_0=0$ a.s.\ and:
\[
  \E^\psi\left[\expp{-\lambda V_t}\right]= \expp{ -
    t\psi(\lambda) }
  \quad\text{for all $\lambda\geq 0$ and $  t\geq 0$}.
\]
The corresponding  continuous state branching (CB) process $\bY=(Y_t)_{t\geq
  0}$ is an homogeneous Markov process on $[0, +\infty ]$ such that, for
$x\in \R_+$, $\P^\psi_x$-a.s.\ $Y_0=x$ and:
\begin{equation}
   \label{eq:Lap-Y}
   \E^\psi_x\left[\expp{-\lambda Y_t}\right]= \expp{ - x
     v(\lambda, t)}
   \quad\text{for all $\lambda> 0$  and $  t\geq 0$},
\end{equation}
where, for $\lambda> 0$, the function $t\mapsto v(\lambda, t)$ is the
unique positive solution of the equation:
\begin{equation}
   \label{eq:Lap-Y:v}
v(\lambda, t) + \int_0^t \psi(v(\lambda,s))\, \rd s = \lambda. 
\end{equation}

The  branching mechanism  $\psi$ is  convex, of  class $\cc^\infty  $ at
least on $(0, \infty )$, and $\psi(0)=0$. We simply write $\psi'(0)$ for
$\lim_{\lambda  \rightarrow   0+}  \psi'(\lambda)=\alpha'   -  \int_{(1,
  +\infty )} r\pi(\rd r)\in [-\infty ,  +\infty )$.  The CB process $\bY$
  and the branching mechanism  $\psi$ are critical (resp.\ sub-critical,
  resp.\  super-critical) if  $\psi'(0)=0$  (resp. $\psi'(0)>0$,  resp.\
  $\psi'(0)<0$).  We shall mainly consider the  (sub-)critical case; in
  this case  the integral  $\int_{(0,+\infty)}(r\wedge r^2)\pi(dr)$ is
  finite. When this latter integral is finite,  we can
  rewrite~\eqref{eq:psi} as follows and then define $(\alpha, \beta, \pi)$ as
   characteristic of the branching mechanism $\psi$:
 \begin{equation}
   \label{eq:psi-sub}
\psi(\lambda)=\alpha\lambda+\beta\lambda^2
+\int_{(0,+\infty)}\left(\expp{-\lambda
    r}-1+\lambda r\right)\pi(dr)
\quad\text{with}\quad \alpha= \psi'(0)\in \R.
\end{equation}

  \medskip
  
  We assume~\eqref{eq:hyp-param} and consider also the following assumptions.
  \begin{itemize}
\item Infinite variation:
   \begin{equation}
    \label{eq:variation} \tag{H1}
\boxed{    \beta>0
    \quad\text{or}\quad
    \int_{(0,1)}  r \pi(\rd r)=+\infty .}
\end{equation}
In particular, we then  have $\psi \neq 0$.  Under~\eqref{eq:hyp-param},
this   condition   is   equivalent    to the CB process   $\bY$   being   of   infinite
variation.  Notice   that~\eqref{eq:hyp-param}  and~\eqref{eq:variation}
imply   that  $\psi$   is   strictly  convex   and   a  bijection   from
$[\theta_0,     \infty    )$     to     $[0,     \infty    )$,     with
$\theta_0\in  [0,+\infty) $  the  largest root  of  $\psi=0$; and  that
$\psi$   is   (sub-)critical  if   and   only   if  $\theta_0=0$.   So,
under~\eqref{eq:variation},    we    denote     for    $\lambda>0$    by
$\psi^{-1}(\lambda)$ the only root in $(0, \infty )$ of $\psi=\lambda$.

\item The process $\bY$ and the branching mechanism are  conservative (that is, $\P^\psi_x(Y_t<+\infty
  )=1$ for all $t\geq 0$ and $x\in \R_+$):
   \begin{equation}
    \label{eq:conservative} \tag{H2}
\boxed{    \int_{0+} \frac{\rd \lambda} {|\psi(\lambda)|}= +\infty .}
\end{equation}
Under~\eqref{eq:hyp-param}, if  $\int_{[1, \infty )} r\, \pi(\rd r)$ is finite, then
the branching mechanism 
 $\psi$   is  conservative. 
Under~\eqref{eq:conservative}, the lifetime of $\bY$ is defined by   $\zeta=\inf\{t\,     \colon\,
Y_t=0\}$ with the convention that $\inf \emptyset=+\infty $.

\item The Grey condition:
  (which we shall consider with~\eqref{eq:hyp-param} and~\eqref{eq:variation}):
  \begin{equation}
    \label{eq:grey} \tag{H3}
 \boxed{   \int^{+\infty}\frac{\rd\lambda}{\psi(\lambda)}<+\infty.}
\end{equation}
Under~\eqref{eq:hyp-param}-\eqref{eq:conservative}, the Grey condition is equivalent to a.s.\
either the lifetime   $\zeta$   is     finite     or
$ \lim_{t\rightarrow+\infty } Y_t=+\infty$.

\item (Sub-)critical branching:
   \begin{equation}
    \label{eq:critical} \tag{H4}
\boxed{  \psi'(0)\geq  0.}
\end{equation}
  Under~\eqref{eq:hyp-param}, this condition is equivalent to 
$\lim_{t\rightarrow +\infty } Y_t=0$  a.s., and it
implies~\eqref{eq:conservative}. 
\end{itemize}

We        shall       most        of        the       time        assume
that~\eqref{eq:hyp-param}-\eqref{eq:grey}  hold, and  add some  comments
when~\eqref{eq:critical} does not hold, that is, for the super-critical case.

\medskip

Under~\eqref{eq:hyp-param}-\eqref{eq:grey}, we  define the  function $c$  on $(0,+\infty)$  as the  unique
nonnegative solution of the equation:
\begin{equation}
   \label{eq:def-v}
   \int_{c(h)}^{+\infty}\frac{\rd \lambda}{\psi(\lambda)}=h
   \quad\text{for $h>0$.}
 \end{equation}
We have $c=\lim_{\lambda \rightarrow +\infty } v(\lambda, \cdot)$ and the distribution of the lifetime $\zeta$ of  $\bY$ is
given by, for all $x\geq 0$:
\[
  \P^\psi_x(\zeta\leq h)=\expp{-x c(h)}
  \quad\text{for all $h\geq 0$.}
\]

\subsection{The Lévy tree}
\label{sec:LT}
The genealogy of the CB process $\bY$ (under the canonical measure) can
be described by a  random tree, the so called Lévy tree, see~\cite{Duquesne2002}. The next
description is from  \cite{Duquesne2005a}  using  the
coding of compact real trees by  height function (the considered
(sub-)critical  case  can be extended to the
super-critical case using a Girsanov transformation,  see
\cite{Abraham2011a}).
Under~\eqref{eq:hyp-param}-\eqref{eq:grey}, there  exists a $\sigma$-finite measure $\N^\psi[\rd\ct]$
on $\T$,  or excursion  measure of a  L\'{e}vy tree (with distance $d$
and root $\root$),
satisfying   the following
properties.
\begin{enumerate}[(i)]
\item\label{item:height}  \textbf{Height $\bH(\ct)$.} The distribution of the
  height  $\bH(\ct)$ is given by:
  \begin{equation}
   \label{eq:hauteur}
   \N^\psi[\bH(\ct)=0]=0
\quad\text{and}\quad
   \N^\psi[\bH(\ct)>h]=c(h)
   \quad\text{for all $h>0$.}
  \end{equation}

   \item \textbf{Local time.}
There exists a $\ct$-measure valued process $(\ell^a)_{a\geq 0}$
c\`adl\`ag for the weak topology on finite measures on $\ct$ such that:

\begin{enumerate}[-]
\item        $\ell^0=0$        and,       for        every        $a>0$,
  $\{\ell^a \neq 0\}=\{\bH(\ct)>a\} $, $\N^\psi[d\ct]$-a.e..
\item For every $a>0$, $\ell^a$ is supported on $\ct(a)=\{x\in \ct\,
  \colon\, d(\rho,x)=a\}$ (see~\eqref{eq:t(h)})
  and $\ell^a (\ct\setminus\Lf(\ct))=0$,
  $\N^\psi[d\ct]$-a.e.. 

\item For every $a>0$, we have $\N^\psi[d\ct]$-a.e.\ for
 every bounded
continuous function $f$ on $\ct$ and with the measure $\cn_a^{\ct}$
defined in~\eqref{eq:def-cna}:
\begin{align*}
\langle\ell^a, f \rangle
& = \lim_{\varepsilon \downarrow 0}
\frac{1}{c(\varepsilon)} \int f(x) \ind_{\bH(\ct')\ge
\varepsilon\}} \cn_a^{\ct}(\rd x, \rd\ct') \\
 & = \lim_{\varepsilon \downarrow 0} \frac{1}{c(\varepsilon)} \int f(x)
\ind_{\{\bH(\ct')\ge \varepsilon\}}
\cn_{a-\varepsilon}^{\ct}(\rd x, \rd\ct'),\ \text{if}\ a>0.
\end{align*}
\item The process $(\langle \ell^a, 1 \rangle)_{a\geq 0}$ is distributed
  as the CB process $\bY$ under its canonical measure with branching
  mechanism $\psi$. For simplicity, we shall identify the two processes:
  $Y_a=\langle \ell^a, 1 \rangle$. 
\end{enumerate}

In particular, we have for $\lambda>0$ (or $\lambda\geq 0$ if $\psi'(0)>-\infty $), $a\geq 0$ and 
$v$ defined by~\eqref{eq:Lap-Y:v} that:
\begin{equation}
  \label{eq:N(1-eY)}
  \N^\psi \left[1- \expp{-\lambda Y_a}\right]=v(\lambda, a),
\end{equation}
and with $\theta_0$ the largest nonnegative root of $\psi=0$, and thus
$v(\theta_0, a)=\theta_0$, that:
\begin{equation}
  \label{eq:N(1-eY)0}
  \N^\psi \left[1- \expp{-\theta_0 Y_a}\right]=\theta_0. 
\end{equation}

\item  \textbf{Branching property.} For every $a>0$, the conditional distribution of $\cn_a(\rd x, \rd
  \ct')$ under
  $\N^\psi[\cdot\, |\, \bH(\ct)>a]$ given $R_a(\ct)$ is that of a
  Poisson point measure on $\ct(a)\times \T$ with intensity $\ell^a(\rd
  x)\N^\psi[\rd \ct']$. 
    
\item \textbf{Mass measure and total mass $\sigma$.} The mass measure $\bm$ defined by:
\begin{equation}
\label{eq:int-la}
\bm(\rd x) = \int_0^\infty \ell^a(\rd x) \, \rd a,
\end{equation}
is supported by the leaves:  $\bm(\ct \setminus \Lf(\ct))=0$. 
The distribution of the  total mass $\sigma=\bm(\ct)$ 
is given by:
\begin{equation}
   \label{eq:N[1-S]}
  \N^\psi[\sigma=0]=0
  \quad\text{and}\quad
  \N^\psi[1- \expp{-\lambda \sigma}]=\psi^{-1}(\lambda)
  \quad\text{for $\lambda> 0$.}  
\end{equation}
In particular, we have  (with $\theta_0$ the largest root of $\psi=0$) that:
\[
  \N^\psi[\sigma=+\infty
  ]=\psi^{-1}(0+)=\theta_0.
\]
If $\psi$ is (sub-)critical, then
the Lévy tree $\ct$ is compact and $\sigma$ is finite $\N^\psi$-a.e.\ and:
\[
   \N^\psi[\sigma]=\inv{\psi'(0)}\in (0, +\infty ].
\]

\item \textbf{Branching points.}
\begin{itemize}
\item $\N^\psi[d\ct]$-a.e., for all $x\in \ct$, we have $\kappa_x\in
  \{1, 2, 3, \infty \}$ and $\kappa_\root=1$;  in particular the branching points of $\ct$
  have degree 3 (corresponding to  2
  children)  or   infinity.
\item  $\N^\psi$-a.e.\ the set $\{x\in \ct\,\colon\, \kappa_x=3\}$ is empty
   if $\beta=0$, or a countable dense subset of $\ct$ if $\beta>0$. 
\item The set $\Br_\infty(\ct)$  of infinite branching points is
  nonempty with $\N^\psi$-positive measure if and only if $\pi\ne 0$. If
  $\langle  \pi,1\rangle=+\infty$, the set  $\Br_\infty(\ct)$ is
  $\N^\psi$-a.e.\ a countable dense subset of $\ct$.
   If
  $\langle  \pi,1\rangle<+\infty$, the set  $\Br_\infty(\ct)$ is
  $\N^\psi$-a.e.\ a finite (possibly empty) subset of $\ct$.
\end{itemize}
\item \textbf{``Size'' of the vertices.}
The set $\{ d(\root,x),\ x\in \Br_\infty(\ct) \}$ coincides
$\N^\psi$-a.e.\ with the set of discontinuity times of the mapping $a\mapsto
\ell^a$. Moreover, $\N^\psi$-a.e., for every such discontinuity time $b$
of the map $a \mapsto \ell^a$, there
is a unique vertex $x_b\in \Br_\infty(\ct)\cap\ct(b)$ and
$\Delta_b>0$, such that:
\[
\ell^b = \ell^{b-} + \Delta_b \delta_{x_b},
\]
where the ``size'' $\Delta_b$ of the vertex $x_b$ is positive and  can  be obtained
by the approximation:
\begin{equation}
\label{DefMas}
\Delta_b = \lim_{\varepsilon \rightarrow 0}
\frac{1}{c(\varepsilon)}
n(x_b,\varepsilon),
\end{equation}
with $n(x_b,\varepsilon)=\int \ind_{\{x_b\}}(x)\ind_{\{\bH(\ct') >
\varepsilon\}} \cn_b^\ct(\rd x,\rd\ct')$ is the number of sub-trees
above level $b$ with  root
$x_b$ and height larger than $\varepsilon$.
\end{enumerate}

In order to stress the dependence in $\ct$, we may write $\ell^{a, \ct}$
for the local time $\ell^a$, $\bm^\ct$ for the mass measure  and  $\sigma^\ct$  for the total mass.

\begin{rem}[On Assumptions~\eqref{eq:hyp-param}-\eqref{eq:critical}
  for the Lévy tree]
  \label{rem:onH1-4}
  The Lévy trees are defined under~\eqref{eq:hyp-param}-\eqref{eq:grey},
  but they  could be  introduced without some  of those  assumptions. If
  Assumption~\eqref{eq:variation} does  not hold, then the  Lévy tree is
  discrete,   and  the   machinery  developed   here  is   not  adapted.
  Assuming~\eqref{eq:hyp-param}-\eqref{eq:conservative},      the      Grey
  condition~\eqref{eq:grey} implies  that the  Lévy tree belongs  to the
  Polish space $\T$ of (GH-isometric  classes) of locally compact rooted
  tree;  using the  mass erasure  procedure approach  from Duquesne  and
  Winkel~\cite{duquesne2022mass},  it  seems  possible  to  get  rid  of
  Assumption~\eqref{eq:grey} by  considering a larger space  of trees
  with    nice    topological    properties.     Under~\eqref{eq:hyp-param}-\eqref{eq:grey},    the
  condition~\eqref{eq:critical}  is then  equivalent  to  the Lévy  tree
  being $\N^\psi$-a.e.\ compact, that is, having a finite height.
\end{rem}

\subsection{Girsanov transformation and related measure on L\'evy trees}
\label{sec:meas-LT}
Assume~\eqref{eq:hyp-param}-\eqref{eq:grey}. 
We  define a  probability  measure on  $\T$  as follow.   Let $r>0$  and
$\sum_{i\in\ci}\delta_{(h_i,\ct_i)}(\rd h, \rd \ct)$ be  a Poisson point measure  on
$\R_+\times \T$ with
intensity  $\rd h \N^\psi[\rd \ct]$.   Consider $\{\root\}$  as  the trivial  measured
rooted  real tree reduced  to the  root. We define for $r>0$:
\[
  \ct_{[r]}=\{\root\}   \circledast_{i\in  \ci_r}(  \ct_i,   \root)
  \quad\text{with}\quad
  \ci_r=\{i\in \ci\, \colon\, h_i\leq  r\}.
\]
Since $\N^\psi[H(\ct)=0]=0$ and $\N^\psi[H(\ct)>\varepsilon]$ is finite
for all $\varepsilon>0$,
we deduce from Section~\ref{sec:T-T1} that $\ct_{[r]}$ is a $\T$-valued
random variable.  We denote by $\P^\psi_r$ its probability
distribution.  Notice that $\P^\psi_r$-a.s., the root is an infinite
branching vertex and its size $\Delta_\root$ defined by~\eqref{DefMas}
(with $b=0$)
is exactly  $r$. 
The corresponding local 
time  is defined  by $\ell^a_{[r]}=\sum_{i\in
  \ci_r}  \ell^{a, \ct_i}$  for  $a> 0$,  and we set
$\ell^0_{[r]}=r \delta_\root$. Thanks to 
Property (ii) on the local time, the process $
(\langle \ell^a_{[r]} , 1\rangle_a\geq 0)$ is distributed as the CB process
$\bY$  under $\P^\psi_r$.

\medskip

We now recall the  Girsanov
transformation from~\cite{Abraham2011a,Abraham2011}; this transformation allows in particular to define the Lévy trees in the super-critical regime. 
Assume~\eqref{eq:hyp-param}.
We consider the interval $\Theta'=\{\theta\in \R\, \colon\, \int_{(1, +\infty )}
\expp{-\theta r} \, \pi(\rd r)<+\infty \}$, and notice that  $\R_+
\subset \Theta$. The
function $\psi$ given by~\eqref{eq:psi} is in fact well defined on
$\Theta'$. 
For $\theta\in \Theta'$, we define the function
$\psi_\theta$ by:
\begin{equation}
   \label{eq:def-psi-q}
  \psi_\theta(\lambda)= \psi(\lambda+\theta) - \psi(\theta)
  \quad\text{for $\lambda\geq 0$;}  
\end{equation}
it is a branching mechanism whose 
quadratic parameter  is the same $\beta_\theta=\beta$, and whose  Lévy measure
is given by $\pi_\theta(\rd r)= \expp{-r  \theta} \, \pi(\rd r)$.
For $\theta\in \Theta'$, 
the       branching        mechanism       $\psi_\theta$       satisfies
also~\eqref{eq:hyp-param}.
Eventually, we consider the interval $\Theta^\psi=\{\theta\in \Theta'\,
\colon\,  \psi_\theta 
\text{ is conservative}\}$, and it is elementary to check that
$\Theta^\psi\subset \Theta' \subset  \Theta ^\psi \cup \{\theta_\infty\}$, with
$\theta_\infty= \inf \Theta'\in [-\infty , 0]$.
If  furthermore   the  branching  mechanism
$\psi$ satisfies~\eqref{eq:variation}  (resp.~\eqref{eq:grey}), then the
branching  mechanism  $\psi_\theta$ satisfies  also~\eqref{eq:variation}
(resp.~\eqref{eq:grey}).

For $h\geq 0$, we set $\cf_h$  the  $\sigma$-field  on $\T$ generated  by the
truncation map $R_h$, and under $\N^\psi[\rd \ct]$, we denote 
$\sigma_h=\bm^\ct  (\ct_h)=\int_0^h Y_a\, \rd a$ the total mass of the
truncated Lévy tree  $\ct_h=R_h(\ct)$, with 
 $Y_a=\langle
\ell^a, 1 \rangle$ the total local
time at level $a$ of $\ct$. 
It is elementary to deduce from~\cite[Theorem~2.22]{Abraham2011} (see
also~\cite[Lemma~3.8 and Corollary~4.4]{Abraham2009b}) that for
$\theta\in \Theta^\psi$ and 
$h\geq 0 $:
\begin{equation}
   \label{eq:Girsanov}
 \frac{d\N^{\psi_\theta}}{d\N^\psi}_{|\cf_h}=  \expp{ -\psi(\theta) \sigma_h - \theta  Y_h},
\end{equation}
and furthermore, with $ \sigma= \bm^{\ct}(\ct)$ the total mass of $\ct$:
\begin{equation}
   \label{eq:Girsanov-sigma}
 \ind_{\{\sigma<+\infty \}}\, \N^{\psi_{\theta}}[\rd \ct]
 = \ind_{\{\sigma<+\infty \}}\,\expp{ - \psi(\theta) \sigma}\, 
 \N^\psi[\rd \ct]. 
\end{equation}
Recall that 
$\N^{\psi_\theta}$-a.e.\ $\sigma$ is finite if and only if 
$\psi_\theta$ is sub-critical or critical. 
We deduce   that the distribution of the total mass $\sigma$ under $\N^\psi$
on $(0, +\infty )$ has a density w.r.t.\ the Lebesgue measure, say
$f_\sigma$, if and only if  the distribution of  $\sigma$ under $\N^{\psi_\theta}$
on $(0, +\infty )$ has a density w.r.t.\ the Lebesgue measure, say
$f^\theta_\sigma$, and that in this case:
\begin{equation}
   \label{eq:density-q}
  f^\theta_\sigma(r)=\expp{- \psi(\theta)r }\, f_\sigma(r)
  \quad\text{for}\quad
  r\in (0, +\infty ).
\end{equation}

\medskip

Let $\theta\geq 0$. 
We will consider the following measure  $\cn_\theta^\psi
[d\ct]=2\beta\theta\N^{\psi}[\rd\ct]
 +\int_{(0,+\infty )}  {\pi}( \rd r)(1-\expp{-\theta r})\P_r^{\psi}(\rd\ct)
$ on $\T$ and its formal derivative $\partial_\theta \; \cn_\theta^\psi
[d\ct]$  at $\theta=0$, that is:
\begin{equation}
\label{eq:def-bN}
\bN^\psi[d\ct]=2\beta \N^\psi[\rd\ct]+\int_{(0,+\infty)}r\pi(\rd r)\,\P_r^\psi(\rd\ct).
\end{equation}
Elementary computations yield for $q>0$ such that $\psi(q)>0$:
\begin{equation}
   \label{eq:bN-moment}
\bN^\psi\left[1-\expp{-\psi(q) \sigma}\right]=2\beta q + \int_{(0,
  +\infty )} (1- \expp{-q r}) \, r \pi(\rd r) = \psi'(q)
- \psi'(0).
\end{equation}

\subsection{The Kesten tree $\ck$}
\label{sec:kesten}
Recall the infinite branch $\spine$ defined at the end of
Section~\ref{sec:T-T1}. Its length measure, denoted by  $\rd h$, is just the Lebesque measure
on $\R_+$.

\begin{defi}[Kesten tree]
  \label{defi:kesten}
  Let  $\psi$   be  a   branching  mechanism  satisfying~\eqref{eq:hyp-param}-\eqref{eq:grey}.   Let
  $(h_i, \ct_i)_{i\in  I}$ be the  atoms of  a Poisson point  measure on
  $\spine \times  \T$ with intensity  $\rd h \, \bN^\psi[\rd  \ct]$ (and
  $\bN^\psi$ given in~\eqref{eq:def-bN}).  The  Kesten tree $\ck$, whose
  distribution is  denoted $\P^\psi$,  is a $\T$-valued  random variable
  defined as the infinite branch (or  spine) $\spine$, on which the $\ct_i$ are
  grafted at height $h_i$, that is:
\[
  \ck= \spine  \circledast_{i\in
    I}( \ct_i,h_i).
\]
\end{defi}
The left part of Fig.~\ref{fig:height} give an illustration of the Kesten tree. 
Notice  the  Kesten  tree  is   indeed  a  $\T$-valued  random  variable
by~\cite[Lemma~5.31]{adhe-2022}.
We see  $h\geq 0$ as an element of the spine $\spine$ at distance $h$
from the root $\root=0$ and as a distinguished vertex of $\ck$. In particular the
   Kesten tree whose spine is
truncated at level $h\geq 0$, that is, $\wck_h=\wbr(\ck, h)$, can be described as: 
\begin{equation}
   \label{eq:Kh}
  \wck_h=\II{0,h}  \circledast_{i\in
    I_h}( \ct_i,h_i)
  \quad\text{with}\quad
  I_h=\{i\in I\, \colon \, h_i\leq h\},
\end{equation}
and
$(\wck_h, h)$ is a $\T_1$-valued random variable.
The  local  time of the  Kesten tree
 $\ck$      at     level      $h\geq      0$      is     defined      by
 $\ell^{h,   \ck}=\sum_{i\in   I_h}  \ell^{h-h_i,   \ct_i}$.

We can recast Theorem~4.5 from~\cite{Duquesne2005a} in our setting for
the Lévy tree with a marked leaf (see~\cite[Corollary~5.9]{adhe-2022}
for the supercritical quadratic, that is, when $\pi=0$). 
\begin{prop}[Lévy tree with a marked leaf at a given level]
   \label{prop:TX=Kh}
   Assume~\eqref{eq:hyp-param}-\eqref{eq:grey}          hold         and
   $\psi'(0)> -\infty $.   Let $G$ be a  nonnegative measurable function
   defined on $\T_1$. We have for $h>0$:
\begin{equation}
   \label{eq:Levy-Kh}
     \N^\psi\left[\int_\ct \ell^h(\rd x)\, G(\ct, x)\right]
     = \expp{- h \psi'(0)} \E^\psi[G(\wck_h, h)].
   \end{equation}   
 \end{prop}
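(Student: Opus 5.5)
The plan is to reduce \eqref{eq:Levy-Kh} to the (sub-)critical case, where it is Theorem~4.5 of \cite{Duquesne2005a} written in the present notation, and then to treat the super-critical case (with $\psi'(0)>-\infty$) by the Girsanov transformation \eqref{eq:Girsanov}. Since both sides of \eqref{eq:Levy-Kh} define measures in $G$, I will use a monotone class argument. It then suffices to prove the identity for $G(\bt,x)$ depending on $(\bt,x)$ only through finitely many restriction-type functionals. Concretely, I take the exponential functionals $\exp(-\langle \ell^{a},f\rangle)$ of the local times of $\wbr(\bt,x)$ at levels $a\le h$, together with functionals of the sub-trees grafted on $\II{\root,x}$.

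\textbf{Critical or sub-critical case.} I use the coding of $\ct$ by the height process $H$ and the exploration process $\rho$ under the excursion measure. The measure $\ell^h(\rd x)$ is the image of the local time at level $h$ of $H$, so
\[
\N^\psi\Big[\int \ell^h(\rd x)\,G(\ct,x)\Big]=\N^\psi\Big[\int_0^\sigma \rd L^h_s\, G(\ct,p(s))\Big].
\]
I then apply the many-to-one (Bismut-type) decomposition of \cite{Duquesne2002,Duquesne2005a}. Under $\N^\psi[\int \rd L^h_s\,\cdot\,]$, the pair of exploration processes before and after $s$ (time-reversed for the "before" part) is given by two independent random measures $(\rho_s,\eta_s)$ on $[0,h]$, each with the law of $\ind_{[0,h]}(t)\,\rd U^{(1)}_t$ and $\rd U^{(2)}_t$. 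Here $(U^{(1)},U^{(2)})$ is the bivariate subordinator with Laplace exponent
\[
\frac{\psi(\lambda)-\psi(\mu)}{\lambda-\mu}-\alpha ,
\]
and the total mass factor is $\expp{-\alpha h}$ with $\alpha=\psi'(0)$. The trees hanging off the ancestral line $\II{\root,p(s)}$ are then obtained, conditionally on $(\rho_s,\eta_s)$, from Poisson measures with intensity $(\rho_s+\eta_s)(\rd t)\,\N^\psi[\rd\ct]$. This amounts to grafting on the spine $\II{0,h}$, at heights given by the jumps of $U^{(1)}+U^{(2)}$, trees of law $\P^\psi_r$ with $r$ the jump size, plus the $2\beta\,\N^\psi$ part coming from the drift. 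Computing the Lévy measure of $U^{(1)}+U^{(2)}$ shows that the grafted trees form a Poisson measure with intensity $\rd t\,\bN^\psi[\rd\ct]$, using \eqref{eq:def-bN} and \eqref{eq:bN-moment}. This matches \eqref{eq:Kh}, so the right-hand side is $\expp{-h\psi'(0)}\E^\psi[G(\wck_h,h)]$.

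\textbf{Super-critical case.} Here $\psi'(0)\in(-\infty,0)$. I pick $\theta>\theta_0$ in $\Theta^\psi$, so that $\psi_\theta$ is sub-critical. The event and the functional $\int\ell^h(\rd x)G(\ct,x)$ with $G$ depending only on $R_h$ are $\cf_h$-measurable, so \eqref{eq:Girsanov} gives
\[
\N^\psi\Big[\int\ell^h G\Big]=\N^{\psi_\theta}\Big[\expp{\psi(\theta)\sigma_h+\theta Y_h}\int\ell^h G\Big].
\]
I apply the sub-critical identity for $\psi_\theta$ to this weighted functional. On the Kesten side the weight factorizes over the Poisson grafted trees: the factor $\expp{\theta Y_h}$ is carried by the trees grafted on the spine through their local times at level $h-h_i$. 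Exponential formula computations then turn $\bN^{\psi_\theta}$ into $\bN^\psi$ and $\expp{-h\psi_\theta'(0)}$ into $\expp{-h\psi'(0)}$, using $\psi_\theta'(0)=\psi'(\theta)$ and \eqref{eq:bN-moment}. Since the identity is a priori for truncated functionals, the general $G$ needs one more step. I use $\wck_h\subset R_{h'}$-information as $h'\to\infty$ and a monotone class passage, relying on measurability of $\wbr$ from Section~\ref{sec:T-T1}.

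\textbf{Main obstacle.} I expect the hardest step to be the identification of the decomposition along the ancestral line with the Poisson grafting of intensity $\rd t\,\bN^\psi$. This requires checking the compensator of the jumps of $U^{(1)}+U^{(2)}$, namely $r\pi(\rd r)\rd t$, with the split of each jump $r$ into the masses of $\rho$ and $\eta$ being uniform. It also requires checking that the grafted forest at a jump is indeed $\P^\psi_r$. The Girsanov extension is routine by comparison, although $\cf_h$-measurability of $G(\ct,x)$ for $x$ at level $h$ should be verified there.
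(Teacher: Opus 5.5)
Your proposal is correct and follows the paper's route. The (sub-)critical case is Theorem~4.5 of \cite{Duquesne2005a}, which the paper simply cites rather than re-deriving from the exploration process, and the super-critical case goes through a Girsanov change to a sub-critical $\psi_\theta$, the exponential formula over the grafted trees, and a monotone class argument with $G$ depending on a restriction at a higher level $a\ge h$. The differences are minor: the paper takes $\theta=\theta_0$, so that $\psi(\theta_0)=0$ removes the $\sigma_h$ factor and only $\N^\psi[1-\expp{-\theta_0 Y_a}]=\theta_0$ is needed, whereas your choice $\theta>\theta_0$ requires the identity $\N^\psi[1-\expp{-\psi(\theta)\sigma_t-\theta Y_t}]=\theta$ (from the branching property and $\psi^{-1}(\psi(\theta))=\theta$) before \eqref{eq:bN-moment} applies; also, $\rho_s$ and $\eta_s$ are not independent since they share the jumps of $(U^{(1)},U^{(2)})$, which is harmless because only their sum enters.
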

 \begin{proof}        For         the        (sub-)critical        case,
   see~\cite[Theorem~4.5]{Duquesne2005a}.       We      now       assume
   $\psi'(0)\in (-\infty ,  0)$ and we shall use a  Girsanov argument to
   deduce the result from the sub-critical case. Let $h>0$. 
  
   For a pointed tree $(T,d, \root,  x)\in \T_1$, where $\root, x\in T$,
   we  uniquely  decompose   the  tree  $T$  according   to  the  branch
   $\II{\root, x}$ and the sub-trees grafted on this branch:
  \[
    T= \II{\root, x}\circledast_{i\in I}(T_i,x_i),
  \]
  where $x_i$  belongs to the  branch $\II{\root,  x}$ and $T_i$  is the
  union  of all  the connected  components of  $T\setminus \{x_i\}$  not
  containing the root  nor $x$, with $x_i$ added as  a root. Notice this
  decomposition   is  unique   (and  measurable   using  an   adaptation
  of~\cite[Proposition~5.32]{adhe-2022}).     We   shall    consider   a
  particular     choice      of     function     $G$      defined     by
  $ G(T,x)=\exp{\left( - \sum_{i\in I} G_0(d(\root, x_i), T_i)\right)}$,
  with   $G_0$   a   nonnegative    measurable   function   defined   on
  $\R_+\times\T $ such that $G_0(T,x)=0$ if $\bH(T)>a-h$ for some $a$ large.

  Let   $\theta_0>0$  be   the  only   root  of   $\psi=0$.  We   deduce
  from the Girsanov transformation~\eqref{eq:Girsanov} that:
 \begin{align*}
   \N^\psi\left[\int_\ct \ell^h(\rd x)\, G(\ct, x)\right]
  & =  \N^{\psi_{\theta_0}}\left[\int_\ct \ell^h(\rd x)\, G(\ct,
    x)\expp{\theta_0 Y_a}\right]\\
   &= \expp{-h \psi'(\theta)} \E^{\psi_\theta} \left[G(\wck
     _h,h)\expp{\theta_0 \ell^{h, \ck}}\right]\\
  &=   \expp{-h \psi'(\theta)}
    \exp{\left(- \int_0^h \rd s \, \bN^{\psi_{\theta_0}} [1- \expp{-G_0(s,
    \ct)+ \theta_0 Y_{a-s}}]\right)}.
 \end{align*}
Using~\eqref{eq:Girsanov} and then~\eqref{eq:N(1-eY)0}, we get that:
 \[
   \N^{\psi_{\theta_0}} [1- \expp{-G_0(s,     \ct)+ \theta_0 Y_{a-s}}]
   =  \N^{\psi} [1- \expp{-G_0(s,\ct)}] -   \N^{\psi} [1- \expp{
     -\theta_0 Y_{a-s}}]
      =  \N^{\psi} [1- \expp{-G_0(s,\ct)}] -\theta_0. 
\]
Using that the Lévy measure associated to $\psi_{\theta_0}$ is
$\expp{-\theta_0 r }\pi(\rd r)$, we then deduce that:
\begin{multline*}
  \int_{(0, +\infty )} r\expp{-\theta_0 r}\pi(\rd r)
  \P_r^{\psi_{\theta_0}} \left(1- \expp{-G_0(s,     \ct)+ \theta_0 Y_{a-s}}
  \right)\\
  =  \int_{(0, +\infty )} r\, \pi(\rd r)
  \P_r^{\psi} \left(1- \expp{-G_0(s,     \ct)}
  \right)
- \int_{(0, +\infty )} (1-\expp{-\theta_0 r})\, r\pi(\rd r).
\end{multline*}
Thus, we obtain that:
\[
  \bN^{\psi_{\theta_0}} [1- \expp{-G_0(s,
    \ct)+ \theta_0 Y_{a-s}}]
  =\bN^{\psi} [1- \expp{-G_0(s,
    \ct)}] - \psi'(\theta_0) + \psi'(0),
\]
and then that:
\[
     \N^\psi\left[\int_\ct \ell^h(\rd x)\, G(\ct, x)\right]
=  \expp{-h \psi'(0)} \E^{\psi} \left[G(\wck
  _h,h)\right].
\]
By dominated  convergence, we can remove the condition that $G_0(T,x)=0$
if $\bH(T)>a-h$ for some $a$ large, and then by the monotone class
theorem, we deduce the previous equality holds for any 
       nonnegative measurable function $G$
   defined on $\T_1$. 
\end{proof}

 Taking $G\equiv 1$ in~\eqref{eq:Levy-Kh}, if $\psi'(0)>-\infty $, we
 get that for $h>0$:
 \begin{equation}
   \label{eq:NYh}
     \N^\psi\left[\langle  \ell^h, 1 \rangle\right]
     =\N^\psi\left[Y_h\right]=\expp{- h\psi'(0)}.
 \end{equation}
 For the  sub-critical case, inegrating~\eqref{eq:Levy-Kh} over  $h$ and
 using~\eqref{eq:int-la}, we also obtain the following corollary.

\begin{cor}[Sub-critical Lévy tree with a marked leaf]
   \label{cor:TX=Kh}
   Assume~\eqref{eq:hyp-param}-\eqref{eq:grey}  hold and  $\psi'(0)>0$.  Let $G$ be
   a nonnegative measurable function defined on $\T_1$. 
   We have:
   \[
 \inv{  \N^\psi[\sigma]}\,  \N^\psi\left[\int_\ct \bm(\rd x)\, G(\ct, x)\right]
     =  \E^\psi[G(\wck_{E}, E)],
   \]
   where $E$ is under $\P^\psi$  an exponential random variable with mean
   $\N^\psi[\sigma]=1/\psi'(0)$ independent of  $\ck$.
 \end{cor}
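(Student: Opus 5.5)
The plan is to integrate the identity~\eqref{eq:Levy-Kh} of Proposition~\ref{prop:TX=Kh} over the level $h\in(0,+\infty)$, and to recognise the result as a mixture of the truncated Kesten trees $\wck_h$ against an exponential law.

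\textbf{Step 1: from the mass measure to local times.} Since $\psi'(0)>0$, $\psi'(0)$ is finite, so Proposition~\ref{prop:TX=Kh} applies for every $h>0$. By the definition~\eqref{eq:int-la} of the mass measure, $\bm(\rd x)=\int_0^\infty \ell^a(\rd x)\,\rd a$. For $G\ge 0$ measurable on $\T_1$, the map $(a,\ct)\mapsto \int_\ct \ell^a(\rd x)\,G(\ct,x)$ is jointly measurable, because $a\mapsto\ell^a$ is càdlàg for the weak topology. Fubini--Tonelli (everything is nonnegative) then gives
\[
\N^\psi\Big[\int_\ct \bm(\rd x)\,G(\ct,x)\Big]=\int_0^\infty \rd h\,\N^\psi\Big[\int_\ct \ell^h(\rd x)\,G(\ct,x)\Big]=\int_0^\infty \expp{-h\psi'(0)}\,\E^\psi[G(\wck_h,h)]\,\rd h .
\]

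\textbf{Step 2: the normalising constant.} Taking $G\equiv1$ in the display of Step 1 yields $\N^\psi[\sigma]=\int_0^\infty \expp{-h\psi'(0)}\,\rd h=1/\psi'(0)$. This matches the value recalled in Section~\ref{sec:levy-tree}~(iv).

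\textbf{Step 3: the exponential mixture.} Dividing by $\N^\psi[\sigma]$, the right-hand side becomes
\[
\int_0^\infty \psi'(0)\,\expp{-h\psi'(0)}\,\E^\psi[G(\wck_h,h)]\,\rd h .
\]
This is $\E^\psi[G(\wck_E,E)]$ for $E$ an exponential random variable with mean $1/\psi'(0)$, independent of $\ck$, provided $h\mapsto \E^\psi[G(\wck_h,h)]$ is measurable. Measurability holds because $(h,\ck)\mapsto(\wck_h,h)=\wbr(\ck,h)$ is measurable: $h\mapsto(\ck,h)\in\T_1$ is continuous, and $\wbr$ is measurable on $\T_1$ by Section~\ref{sec:T-T1}.

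\textbf{Main obstacle.} The only delicate point is this joint measurability, both of $(h,\ct)\mapsto\int\ell^h(\rd x)\,G(\ct,x)$ and of $(h,\ck)\mapsto(\wck_h,h)$, needed to apply Fubini and to identify the mixture. The first follows from the càdlàg property of $\ell$; for the second I would use the explicit description~\eqref{eq:Kh} of $\wck_h$ together with the measurability results quoted from~\cite{adhe-2022}.
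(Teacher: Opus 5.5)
Your proposal is correct and is exactly the paper's argument: the paper obtains the corollary by integrating~\eqref{eq:Levy-Kh} over $h\in(0,+\infty)$, using the occupation formula~\eqref{eq:int-la}, and reading $\psi'(0)\expp{-h\psi'(0)}\,\rd h$ as the law of $E$. Your remarks on Fubini and on the joint measurability of $(h,\ck)\mapsto(\wck_h,h)$ only spell out details that the paper leaves implicit.
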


   Provided
 $\psi'(0)>-\infty       $,        the       total local time
 process of the Kesten tree, 
 $(\langle  \ell^{a,  \ck},  1  \rangle)_{a\geq  0}$,  is  a  CB  process
 $\bZ=(Z_a)_{a\geq 0}$ with immigration (CBI  process) defined as the CB
 process    with   branching    mechanism    $\psi$   and    immigration
 $\psi'   -   \psi'(0)$.    As   a   consequence   of~\eqref{eq:Levy-Kh}
 and~\eqref{eq:NYh}, if $\psi'(0)>  -\infty $ the one-dimensional
 marginal  $Z_a$  is distributed  as  the  size biased  distribution  of
 $Y_a=\langle \ell^a, 1  \rangle$ under the excursion  measure, that is,
 for $a>0$ and $g$ a measurable nonnegative function defined on $\R_+$:
\begin{equation}
  \label{eq:EZK}
  \E^\psi[g(Z_a)]=\expp{ a\psi'(0)}\N^\psi[Y_a
  \,   g(Y_a)] =\frac{\N^\psi[Y_a\, 
    g(Y_a)]}{\N^\psi[Y_a]} \cdot
\end{equation}

\section{L\'evy tree with a given height}

\subsection{Coupling and convergence for the conditioning by the height}
Assume~\eqref{eq:hyp-param}-\eqref{eq:critical} so  that  $\psi'(0)=\alpha\geq 0$  and  the branching  is
(sub-)critical.      We refer to Remark~\ref{rem:super-crit-H} for the
super-critical case where~\eqref{eq:critical} is not satisfied. 
According         to~\eqref{eq:def-v}        and
Item~\ref{item:height}  in  Section~\ref{sec:LT},   we  get  that  under
$\N^\psi[\rd \ct]$ the distribution of the height $\bH(\ct)$ of the Lévy tree
$\ct$   has   the  density   $|c'|$   w.r.t.\   the  Lebesque   measure.
By~\cite[Corollary~1.29]{k:rm}, there  exists a  regular version  of the
conditional     distribution    of     $\ct$    given     its    height,
$\N^\psi[\rd     \ct     \,      |\,     \bH(\ct)=h]$     for     $h>0$.
See~\cite[Theorem~3.3]{Abraham2009a}  for a  nice representation  of the
compact rooted random  tree $\cth$ distributed as  $\ct$ under $\N^\psi$
conditionally  on $\{  \bH(\ct)=h\}$ for  any given  $h>0$:
\[
  \cl(\cth)= \N^\psi[\rd \ct \, |\,  \bH(\ct)=h].
  \]
From this representation, 
there exists a unique vertex $X_h$ of  $\cth$ which is at distance $h$ of
the root.


Recall  the  Kesten  tree $\ck$  from  Definition~\ref{defi:kesten}.  We
consider the tree  $\ckh$ defined as the Kesten tree  whose spine is cut
at level $h$ and where the  trees $\ct_i$, grafted at height $h_i$ (less
that $h$) and with height larger than $h-h_i$ are removed, that is:
\[
  \ckh= \II{0, h}  \circledast_{i\in
    I^{\height}_h}( \ct_i,h_i)
  \quad\text{with}\quad
  I^{\height}_h=\{i\in I\, \colon\, H(\ct_i)+ h_i\leq  h\},
\]
and  the  branch $\II{0,h}$  can  be  seen  as  the restriction  of  the
semi-infinite spine  $\spine$ of the  Kesten tree  up to level  $h$.  We
shall also see $h$ as a distinguished vertex of $\ckh$;  it is the only one at distance $h$
from the root. (Let  us stress that $\ct_i$ is the  only tree grafted at
height  $h_i$ and  that  its  root is  an  infinite  branching point  if
$\kappa_{h_i}(\ck)=+\infty $.)   Let us  mention the  following relation
between    the    different    truncations     of    the    Kesten    tree:
$\ckh \subset R_h(\ck)\subset \wck=\wbr(\ck, h)$.

We have the following coupling of the Lévy tree conditioned on its
height and the Kesten tree whose proof is given in Section~\ref{sec:proof-H}.

\begin{theo}[Coupling for the height conditioning]
  \label{theo:height}
  Assume~\eqref{eq:hyp-param}-\eqref{eq:critical}  hold. Let $h\in (0, +\infty )$. Then, the Lévy tree
 under $\N^\psi$  conditioned to have height $h$, denoted
 $\cth$,  is distributed as the
  truncated Kesten tree $\ckh$, more precisely (in $\T_1$):
 \begin{equation}
   \label{eq:KH=Th}
(   \cth, X_h)
   \, \overset{\text{(d)}}{=} \,
   (\ckh,h).
  \end{equation} 
\end{theo}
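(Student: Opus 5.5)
The plan is to start from Proposition~\ref{prop:TX=Kh}, which gives the Lévy tree with a leaf marked according to the local time at level $h$. The event $\{\bH(\ct)=h\}$ is degenerate, so I will condition on $\{h\le \bH(\ct)\le h+\varepsilon\}$ and identify the density. An equivalent but cleaner route is to compute, for a test functional $G$ on $\T_1$ and $\varepsilon>0$,
\[
\N^\psi\Bigl[\int_\ct \ell^{h}(\rd x)\, G(\ct,x)\,\ind_{\{\bH(\ct)\le h+\varepsilon\}}\Bigr]
= \expp{-h\psi'(0)}\,\E^\psi\bigl[G(\wck_h,h)\,\ind_{\{\bH(\wck_h)\le h+\varepsilon\}}\bigr].
\]
The right side factorizes by Poisson independence. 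Under $\bH(\wck_h)\le h+\varepsilon$, each grafted tree satisfies $H(\ct_i)+h_i\le h+\varepsilon$. The probability of this is $\exp(-\int_0^h \bN^\psi[\bH(\ct)>h+\varepsilon-s]\,\rd s)$. Conditionally, the grafted trees form a Poisson measure restricted to $\{H(\ct_i)+h_i\le h+\varepsilon\}$.

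Next I let $\varepsilon\downarrow 0$ and divide by the appropriate normalization. On the left, I decompose at level $h$ using the branching property (iii). Given $R_h(\ct)$, the subtrees above level $h$ form a Poisson measure with intensity $\ell^h(\rd x)\N^\psi$. So $\{\bH\le h+\varepsilon\}$ has conditional probability $\expp{-c(\varepsilon)Y_h}$. As $\varepsilon\to0$, conditioning on $\bH\in(h,h+\varepsilon]$ behaves like $c(\varepsilon)Y_h\expp{-c(\varepsilon)Y_h}$, and exactly one subtree exceeds height $\varepsilon$. Its root is a point chosen according to $\ell^h/Y_h$.

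Concretely, I will use the Palm formula for that Poisson measure. This gives
\[
\N^\psi\bigl[F(R_h(\ct))\,;\, h<\bH(\ct)\le h+\varepsilon\bigr]
=\N^\psi\Bigl[\int \ell^h(\rd x)\,F(R_h(\ct))\,\bigl(c(\varepsilon)\text{-type factor}\bigr)\,\expp{-c(\varepsilon) Y_h}\Bigr]
\]
plus error terms from two or more tall subtrees, which are $O(c(\varepsilon)^2)$ relative. The selected root $x$ is then the point on level $h$ through which the tree reaches height in $(h,h+\varepsilon]$. Applying Proposition~\ref{prop:TX=Kh} to the marked pair $(R_h(\ct),x)$, with the weight $\expp{-c(\varepsilon)Y_h}$ rewritten as the requirement that no other grafted tree reaches above level $h$ by more than $\varepsilon$, yields $(\wck_h,h)$ restricted to grafted trees with $H(\ct_i)+h_i\le h+\varepsilon$. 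Letting $\varepsilon\to0$, this converges to $(\ckh,h)$, using $\P(\bH(\ct_i)+h_i=h)=0$ for the atoms. The normalizations must agree. The total mass gives $|c'(h)|=\expp{-h\psi'(0)}\exp(-\int_0^h\bN^\psi[\bH>h-s]\rd s)\times$ the derivative factor. The matching is automatic once both sides are written as the same limit, and taking $G\equiv1$ identifies the constant.

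The main obstacle is the justification of the limit $\varepsilon\to0$: the interchange of the limit with $\N^\psi$, and the choice of the regular version. Since $(\cth)$ is only defined for a.e.\ $h$ via disintegration, I will prove the identity integrated against $\rd h$ instead. That is, I show $\N^\psi[\Phi(\bH(\ct))G(\ct,X)]=\int \rd h\,|c'(h)|\Phi(h)\E^\psi[G(\ckh,h)]$. I do this by applying the above with $\varepsilon$-averaging and monotone convergence. Then I use the continuity of $h\mapsto$ law of $(\ckh,h)$, or the representation of \cite[Theorem~3.3]{Abraham2009a}, to get equality for every $h$. A more direct alternative, if the Palm route is messy, is to compare the Laplace functionals of the grafted point measures on both sides using the description in \cite[Theorem~3.3]{Abraham2009a}, which already exhibits $\cth$ as a spine with Poissonian grafts of intensity $\bN^\psi[\rd\ct;\,\bH(\ct)\le h-s]$.
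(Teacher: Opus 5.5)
Your main (Palm) route breaks at the decomposition step. On $\{h<\bH(\ct)\le h+\varepsilon\}$ \emph{every} subtree above level $h$ has height at most $\varepsilon$. The branching property then gives exactly $\N^\psi[h<\bH(\ct)\le h+\varepsilon\,|\,R_h(\ct)]=\ind_{\{Y_h>0\}}\expp{-c(\varepsilon)Y_h}$, so there is no distinguished tall subtree and no selected point on level $h$. The weight $c(\varepsilon)Y_h\expp{-c(\varepsilon)Y_h}$ you write is the probability that exactly one subtree above level $h$ is taller than $\varepsilon$, an event contained in $\{\bH(\ct)>h+\varepsilon\}$. Also, since $c(\varepsilon)\uparrow+\infty$, ``relative errors $O(c(\varepsilon)^2)$'' are not small.

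Your first display (which is correct) only controls the $Y_h$-biased measure $\N^\psi[Y_h\expp{-c(\varepsilon)Y_h}\,\cdot\,]$ on $R_h(\ct)$. Passing to the unbiased one means removing a factor $1/Y_h$ on the region $Y_h\approx 1/c(\varepsilon)\to 0$. That would need something like continuity at $0+$ of the law of $R_h(\ct)$ given $Y_h=y$, which is essentially the statement itself.

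The normalization is also degenerate. Since $\bN^\psi[\bH(\ct)>u]=\psi'(c(u))-\psi'(0)$ and $c'=-\psi\circ c$, one has $\int_\eta^h\psi'(c(u))\,\rd u=\log\bigl(\psi(c(\eta))/\psi(c(h))\bigr)\to+\infty$ as $\eta\downarrow 0$. Hence $\expp{-h\psi'(0)}\exp\bigl(-\int_0^h\bN^\psi[\bH(\ct)>h-s]\,\rd s\bigr)=0$. So a.s.\ infinitely many grafts of $\wck_h$ cross level $h$, $\ckh$ is not $\wck_h$ conditioned on an event of positive probability, and nothing matches ``automatically''.

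A genuine Palm structure appears only if you decompose at a level $h-\eta$ with $\eta>0$ fixed. Then $\N^\psi[h<\bH(\ct)\le h+\varepsilon\,|\,R_{h-\eta}(\ct)]=\expp{-c(\eta+\varepsilon)Y_{h-\eta}}-\expp{-c(\eta)Y_{h-\eta}}\sim\varepsilon|c'(\eta)|\,Y_{h-\eta}\expp{-c(\eta)Y_{h-\eta}}$. Proposition~\ref{prop:TX=Kh} at level $h-\eta$ then represents $\cth$ as $\wck_{h-\eta}$ with grafts restricted to $\bH(\ct_i)+h_i\le h$, plus a top tree conditioned to have height $\eta$. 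The constants match because $|c'(\eta)|\,\psi(c(h))/\psi(c(\eta))=\psi(c(h))=|c'(h)|$, and letting $\eta\downarrow0$ gives the result. That would be a legitimate alternative proof (modulo the regularity issues you flagged), but it is not the argument you wrote.

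Your fallback is the paper's route, but you misstate what \cite[Theorem~3.3]{Abraham2009a} provides. The grafting intensity there is not $\bN^\psi[\rd\ct;\,\bH(\ct)\le h-s]$ but $\bN^\psi_s$. Its $\pi$-part is $r\expp{-rc(h-s)}\pi(\rd r)\,\P^\psi_{r,s}(\rd\ct)$, where $\P^\psi_{r,s}$ is the law of a root carrying Poissonian grafts of intensity $r\N^\psi[\rd\ct;\,\bH(\ct)\le h-s]$. The whole content of the paper's proof is the identity $\expp{-rc(h-s)}\P^\psi_{r,s}(\rd\ct)=\P^\psi_r(\rd\ct;\,\bH(\ct)\le h-s)$, checked via Laplace functionals. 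Here $\expp{-rc(h-s)}$ is the probability that no tree of the Poisson $r\N^\psi$ family attached at the root exceeds height $h-s$, and conditionally on this the family is the restricted Poisson measure. This identity turns $\bN^\psi_s$ into $\bN^\psi[\rd\ct;\,\bH(\ct)+s\le h]$. By the restriction property, $(h_i,\ct_i)_{i\in I^{\height}_h}$ is a Poisson measure with exactly this intensity. With that step made explicit, your fallback is complete and coincides with the paper's proof.
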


\begin{rem}[Reconstruction of the L\'evy tree]
  \label{rem:recons-height}
If  $\psi$ is (sub-)critical  and $H$ is distributed as $\bH(\ct)$
under   $\N^\psi[\rd    \,   \ct]$    (that   is,    with   distribution
$|c'(h)|\, \rd h$ on $(0, +\infty )$) and independent of the Kesten tree
$\ck$ under $\P^\psi$, we  get that  $\ckH$ is  distributed as  the unconditioned
Lévy tree $\ct$ under $\N^\psi[\rd \ct]$.
\end{rem}

By construction, we have the following result on the coupled Kesten
sub-trees. Recall the  convention~\eqref{eq:conv-inclu}
and~\eqref{eq:conv-lim} for the inclusion and limit of trees.

\begin{prop}[Monotony and local convergence for the height coupling]
  \label{prop:height}
  Assume~\eqref{eq:hyp-param}-\eqref{eq:critical}  hold. We have  that for $h'\geq h>0$:
  \[
    \ckh \subset
    \ckhp \quad\text{a.s.,}
  \]
   and the following 
   convergence holds:
 \[
   \lim_{h\rightarrow +\infty} \ckh \stackrel{\text{a.s.}}{=}  \ck
     \quad\text{in}\quad
  (\T, \dlgh).
\]
\end{prop}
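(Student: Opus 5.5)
Both assertions follow from the fact that $\ckh$ is a restriction of the single tree $\ck=\spine\circledast_{i\in I}(\ct_i,h_i)$ in the sense of Section~\ref{sec:T-T1}. We regard $\ckh$ as $\spine\circledast_{i\in I_{A_h}}(\ct_i,h_i)$ with
\[
A_h=\{(s,T)\in\R_+\times\T\,\colon\, s+\bH(T)\le h\}.
\]
This differs from the definition of $\ckh$ only by the part of the spine above level $h$, which carries no grafted tree. We handle that discrepancy below.

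\textbf{Monotonicity.} If $h\le h'$, then $s+\bH(T)\le h$ implies $s+\bH(T)\le h'$, so $A_h\subset A_{h'}$ and $I^{\height}_h\subset I^{\height}_{h'}$. Moreover $\II{0,h}\subset\II{0,h'}$ on the common spine. Since both trees are built from the same Poisson atoms grafted at the same positions on the same spine, $\ckh$ is the subset of $\ckhp$ made of $\II{0,h}$ together with the $\ct_i$, $i\in I^{\height}_h$. The metrics agree by the definition of grafting, and this gives $\ckh\subset\ckhp$ a.s.

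\textbf{Convergence.} The quickest route is through~\eqref{eq:rh(t)}. Fix $a>0$. Under~\eqref{eq:critical} we have $\bN^\psi$-a.e.\ $\bH(\ct)<\infty$: each Lévy tree is compact $\N^\psi$-a.e., and under $\P^\psi_r$ the tree is a finite-time superposition of such trees. Also $\bN^\psi[\bH(\ct)>\varepsilon]<\infty$, since $2\beta c(\varepsilon)+\int r\pi(\rd r)\,\P_r^\psi(\bH>\varepsilon)\le 2\beta c(\varepsilon)+\int r\pi(\rd r)(1-\expp{-rc(\varepsilon)})<\infty$.

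Only finitely many atoms with $h_i\le a$ have $\bH(\ct_i)>1$. The remaining atoms with $h_i\le a$ satisfy $h_i+\bH(\ct_i)\le a+1$. Hence $M_a=\max_{i:h_i\le a}(h_i+\bH(\ct_i))$ is a.s.\ finite. For $h\ge\max(M_a,a)$, every tree grafted at height $\le a$ is kept in $\ckh$, and the spine is present up to level $h\ge a$. Trees grafted above $a$ do not meet $\br_a$. Therefore $\br_a(\ckh)=\br_a(\ck)$, and~\eqref{eq:rh(t)} gives $\dlgh(\ckh,\ck)\le\expp{-a}$ for all such $h$. Letting $a\to\infty$ along integers (a countable intersection of a.s.\ events) yields $\lim_{h\to\infty}\ckh=\ck$ a.s.\ in $(\T,\dlgh)$.

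Alternatively, one can apply~\eqref{eq:conv-lim} with $A=\bigcup_h A_h=\{s+\bH(T)<\infty\}$, for which $(\rd s\,\bN^\psi)(A^c)=0$, and then treat the truncated spine separately as above.

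The main point needing care is this identification of $\bN^\psi$-finiteness of heights, together with the truncation of the spine, which prevents a direct citation of~\eqref{eq:conv-lim}. The restriction argument above avoids both issues.
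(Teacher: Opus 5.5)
Your proof is correct and is essentially the paper's argument. The paper observes that $\br_a(\ckh)=\br_a(\ck)$ for all $a<A_h=\inf\{h_i\colon h_i+\bH(\ct_i)>h\}$, shows that $A_h\to+\infty$ a.s.\ because the grafted trees have finite height, and concludes with \eqref{eq:rh(t)}. This is equivalent to your statement that $M_a<\infty$ a.s.\ for every $a$. Your explicit check that $\bN^\psi[\bH(\ct)>\varepsilon]<\infty$ and $\bN^\psi[\bH(\ct)=\infty]=0$ is precisely the justification the argument needs; the paper only cites the corresponding facts for $\N^\psi$.
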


\begin{proof}
  The inclusion is clear.  Notice that $R_a(\ckh)=R_a(\ck) $ for all $a$
  strictly                           less                           than
  $A_h=\inf\{h_i\,   \colon\,   i\in   I  \quad\text{and}\quad   h_i   +
  \bH(\ct_i)>  h\}$.  Since  $\psi$ is  (sub-)critical by~\eqref{eq:critical},  we have
  $\N^\psi[\bH(\ct)\geq r]$ finite for all $r>0$. We deduce that the map
  $h\mapsto    A_h$   which    is    non-decreasing   satisfies    a.s.\
  $\lim_{h\rightarrow \infty } A_h=+\infty  $.  This 
  and~\eqref{eq:rh(t)} readily imply the
  a.s.\ convergence in $(\T, \dlgh)$.
\end{proof}

We immediately deduce from Theorem~\ref{theo:height}
and Proposition~\ref{prop:height} the following  well known result.

\begin{theo}[Local convergence for the height conditioning]
\label{theo:cv-loc-height}
  Assume~\eqref{eq:hyp-param}-\eqref{eq:critical}  hold. The following 
local   convergence holds in distribution:
 \begin{equation}
   \label{eq:limTh}
      \cth  \xrightarrow[h\rightarrow
   +\infty]{\text{(d)}}
      \ck
     \quad\text{in}\quad
  (\T, \dlgh).
 \end{equation}
\end{theo}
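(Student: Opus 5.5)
The plan is to combine the distributional identity of Theorem~\ref{theo:height} with the almost sure convergence of Proposition~\ref{prop:height}; almost sure convergence implies convergence in distribution. All the probabilistic content sits in those two earlier results, and only a few formal steps remain.

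First, fix $h>0$. Let $\cl(\cth)=\N^\psi[\rd\ct\,|\,\bH(\ct)=h]$ denote the regular conditional distribution. By Theorem~\ref{theo:height}, $(\cth,X_h)$ and $(\ckh,h)$ have the same law on $\T_1$. The map from $\T_1$ to $\T$ that forgets the distinguished vertex is continuous, since $\dlgh$ is dominated by the pointed distance $\dlghu$ (or, more crudely, it is measurable). Pushing the law forward along this map gives $\cth \overset{\text{(d)}}{=} \ckh$ in $(\T,\dlgh)$ for every $h>0$.

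Second, all the trees $\ckh$, $h>0$, are built on a single probability space from the one Kesten tree $\ck$ under $\P^\psi$. By Proposition~\ref{prop:height}, $\dlgh(\ckh,\ck)\to 0$ $\P^\psi$-a.s. as $h\to+\infty$. That argument rests on $A_h\to+\infty$ together with~\eqref{eq:rh(t)}, which gives $\dlgh(\ckh,\ck)\le \expp{-a}$ for every $a<A_h$. Hence, for any bounded continuous $F\colon\T\to\R$, dominated convergence yields $\E^\psi[F(\ckh)]\to\E^\psi[F(\ck)]$.

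Finally, combining the two steps gives $\E[F(\cth)]=\E^\psi[F(\ckh)]\to\E^\psi[F(\ck)]$ for every bounded continuous $F$, which is exactly~\eqref{eq:limTh}. There is no genuine obstacle here. The only point to check is that the identity in law holds for every $h$, not merely for almost every $h$. This is why Theorem~\ref{theo:height} has to be read with the specific regular version of the conditional law given in~\cite{Abraham2009a}, for which $\ckh$ itself provides the version. The limit $h\to+\infty$ is then taken along all real $h$, which is legitimate because Proposition~\ref{prop:height} gives convergence along the full continuum rather than along a sequence.
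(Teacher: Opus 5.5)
Your proposal is correct and is the paper's argument: the paper deduces Theorem~\ref{theo:cv-loc-height} immediately from the identity in law $\cth \overset{\text{(d)}}{=} \ckh$ of Theorem~\ref{theo:height} and the almost sure convergence $\ckh\to\ck$ of Proposition~\ref{prop:height}. In your first step, measurability of the forgetful map $\T_1\to\T$ is all you need (and Theorem~\ref{theo:height} already states the identity in $\T$), so you can drop the unverified claim that $\dlgh$ is dominated by $\dlghu$.
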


\begin{rem}[The super-critical case]
  \label{rem:super-crit-H}
  Assume~\eqref{eq:hyp-param}-\eqref{eq:grey}  hold and  that $\psi$  is
  super-critical.   Then,  according  to~\eqref{eq:Girsanov-sigma},  the
  Lévy tree $\ct$ conditioned to have  a finite mass under $\N^\psi$, or
  equivalently  a  finite  height  thanks  to  the  Grey  condition,  is
  distributed  as  the  Lévy tree  under  $\N^{\psi_{\theta_0}}$,  where
  $\theta_0   $   is   the   only  positive   root   of   $\psi=0$   and
  $\psi_{\theta_0}$ is defined by~\eqref{eq:def-psi-q}.  Furthermore, it
  is elementary to check  that the branching mechanism $\psi_{\theta_0}$
  defined   by~\eqref{eq:def-psi-q}   is  sub-critical.    Thus,   under
  $\N^\psi$,  the conditioned  Lévy  tree $\cth$,  which  is still  well
  defined,  is distributed  as the  conditioned Lévy  tree $\cth$  under
  $\N^{\psi_{\theta_0}}[\rd  \ct]$.   This  gives  that~\eqref{eq:KH=Th}
  and~\eqref{eq:limTh}    hold     if    $\psi$     is    super-critical
  under~\eqref{eq:hyp-param}-\eqref{eq:grey}, but with  $\ck$ the Kesten
  tree    associated   to    the   sub-critical    branching   mechanism
  $\psi_{\theta_0}$.
\end{rem}

\subsection{Proof of Theorem~\ref{theo:height}}
\label{sec:proof-H}
Let $h>0$ be fixed. Recall the function $c$ defined in~\eqref{eq:def-v} so that $c(h)=
\N^\psi[ \bH(\ct)>h]$.  
   According to~\cite[Theorem~3.3]{Abraham2009a}, the distribution of
   $\cth$ and $X_h$ is given by:
   \[
 \II{0, h}  \circledast_{i\in
   I'_h}( \ct'_i,h'_i)
 \quad\text{and}\quad
 h,
\]
where   $(h'_i, \ct'_i)_{i\in  I}$ are  the atoms of a  Poisson point measure
  on $[0,h] \times \T$ with intensity  $\rd h' \, \bN_{h'}^\psi[\rd \ct]$
  with:
  \[
\bN_{h'}^\psi[\rd \ct]= 2 \beta \N^\psi[\rd \ct, \, \bH(\ct)\leq
h-h']
+ \int_{(0, +\infty )} r \expp{- r c(h-h')} \pi(\rd r) \P_{r, h'}^\psi
(\rd \ct),
\]
and $\P_{r,h'}^\psi (\rd \ct)$ is the distribution of 
$\{\root\}   \circledast_{j\in  \cj}(  \ct''_j,   \root)$  where
$(\ct''_j)_{j\in  \cj}$ are  the atoms of a  Poisson point measure
on $ \T$ with intensity  $r \N^\psi[\rd \ct, \bH(\ct) \leq  h-h']$.
Notice that $X_h$ in $\cth$ and $h \in \ckh$ are
the only element at distance $h$ from the root of their tree.

So, the proof is complete once we get that 
$( h_i, \ct_i)_{i \in I^{\height}_h}$ and $(h'_i, \ct'_i)_{i\in  I'}$ have the
same distribution. Let $G$ be a nonnegative measurable function defined
on $\R_+\times \T$. We have:
\[
  \E\left[\expp{- \sum_{i\in I'} G(h'_i, \ct'_i)}\right]
= \exp\left\{ - \int_0^h \rd h'\bN_{h'}^\psi\left[1- \expp{-G(h', \ct)}\right]
  \right\}
\]
and:
\begin{multline*}
 \bN_{h'}^\psi\left[1- \expp{-G(h', \ct)}\right]
  =2 \beta \N^\psi \left[(1- \expp{-G(h', \ct)})\ind_{\{\bH(\ct) +h' \leq
    h\}}\right]\\
    + \int_{(0, +\infty )} r \expp{- c(h-h')} \pi(\rd r) \E_{r, h'}^\psi
\left[1- \expp{-G(h', \ct)}\right].
\end{multline*}
Now considering first a function $G$ such that $G\left(h', \{\root\}
\circledast_{j\in  \cj'}(  \ct^j,   \root)\right)= \sum_{j\in \cj}
G_0(h',\ct^j)$ with $G_0$
a nonnegative measurable function defined
on $\R_+\times \T$, we get:
\begin{align}
  \label{eq:compute!}
  \E_{r, h'}^\psi \left[ \expp{-G(h', \ct)}\right]
&  = \exp\left\{ - r \N^\psi\left[ \left(1-
        \expp{-G_0(h',\ct)}\right)\ind_{\{\bH(\ct) + h'\leq  h\}}
  \right]\right\}\\
  \nonumber
&  = \exp\left\{ - r \N^\psi\left[ 1-
        \expp{-G_0(h',\ct)}\ind_{\{\bH(\ct) + h'\leq  h\}}
  \right] + r \N^\psi[ \bH(\ct) >h-h']\right\}\\
  \nonumber
&  = \E_{r}^\psi\left[ \expp{-G(h',\ct)}\ind_{\{\bH(\ct) + h'\leq  h\}}
  \right] \expp{ r c(h-h')}.
\end{align}
By the monotone class theorem, we deduce that for all nonnegative
measurable function $G$:
\begin{equation}
   \label{eq:Ph'=ecPh}
 \expp{ -r c(h-h')} \, \E_{r, h'}^\psi \left[ \expp{-G(h', \ct)}\right]
=  \E_{r}^\psi\left[ \expp{-G(h', \ct)}\ind_{\{\bH(\ct) + h'\leq  h\}}
\right]
\end{equation}
and thus:
\begin{align*}
  \bN_{h'}^\psi[\rd \ct]
   &= 2 \beta \N^\psi[\rd \ct, \, \bH(\ct)+h'\leq
h]
+ \int_{(0, +\infty )} r \pi(\rd r) \P_{r}^\psi
  (\rd \ct, \, \bH(\ct)+h'\leq h)\\
&=   \bN^\psi[\rd \ct, \, \bH(\ct)+h'\leq h] .
\end{align*}
By construction the left hand-side of the above equality times $\rd h'$
is the intensity of the Poisson point measure $\sum_{i\in I^{\height}_h}
\delta_ {(h_i , \ct_i)}$. We have thus obtained that 
$( h_i, \ct_i)_{i \in I^{\height}_h}$ and $(h'_i, \ct'_i)_{i\in  I'}$ have the
same distribution, which completes the proof.

\section{Lévy tree with  a given maximal vertex ``size''}

\subsection{Coupling and convergence for the conditioning by the maximal
  vertex size}

We assume~\eqref{eq:hyp-param}-\eqref{eq:critical}  and that the Lévy measure $\pi$ is non trivial and denote by 
$\supp(\pi)$ its closed support (in $(0, +\infty )$). In particular
$\psi'(0)\geq 0$. 
For $x\in \Br_\infty(\ct)$, we define the ``size'' of the vertex at $x$ by
$\Delta_x$ given as the right hand-side of~\eqref{DefMas}. Recall that
$\Br_\infty(\ct)$ is $\N^\psi$-a.e.\ a countable dense (resp.\ finite) subset of $\ct$
if $\langle \pi, 1 \rangle=\infty $ (resp.\  $\langle \pi, 1 \rangle<\infty $),
and that the set of vertices with size larger than $\varepsilon>0$ is finite
for any $\varepsilon>0$. 
We consider the maximal ``size'' of the vertices in the Lévy tree $\ct$ under $\N^\psi$:
\[
  \Delta= \max \{ \Delta_x\, \colon\, x\in \Br_\infty(\ct)\},
\]
with the  convention $\max  \emptyset=0$. If  necessary, we  shall write
$\Delta(\ct)$ to  stress the  dependence in  the tree  $\ct$.  According
to~\cite[Proposition~3.1]{nassif2022} (where the size of a vertex is
called mass therein),  the   distribution  of  $\Delta$
under $\N^\psi$ is given by, for $\delta>0$:
\begin{equation}
   \label{eq:ND>d}
  \N^\psi[\Delta>\delta]=\psi_{\{\delta\}}^{-1}(\bar \pi(\delta)),
\end{equation}
where $\bar \pi(\delta)$ is defined in~\eqref{eq:def-pid} and  for $\lambda\geq 0$:
\begin{equation}
   \label{eq:def-psi-d}
  \psi_{\{\delta\}}(\lambda)= \psi(\lambda)+ \int_{(\delta, \infty )} \left(
    1- \expp{-\lambda r}\right)\, \pi(\rd r).
\end{equation}
Intuitively,    the     function    $\psi_\delta$,     with
characteristic 
$(\alpha_{\{\delta\}},  \beta_{\{\delta\}},  \pi_{\{\delta\}})$, is  the
branching mechanism  of the CB  process with branching  mechanism $\psi$
after removing all the jumps of  size strictly larger than $\delta$ (and
killed when  it reaches 0); notice  the quadratic parameter is  the same
$\beta_{\{\delta\}}=\beta$,    the   Lévy    measure    is   given    by
$\pi_{\{\delta\}}(\rd r)=  \ind_{\{r \leq \delta\}} \pi(\rd  r)$ and the
drift                   is                    given                   by
$\psi'_{\{\delta\}}(0)=\alpha_{\{\delta\}}=\psi_\theta'(0)+         \int
_{(\delta,     \infty    )}     r     \,     \pi(\rd    r)$.      Thanks
to~\cite[Corollary~3.2]{nassif2022},         we         get         that
$\N^\psi[\Delta\in  \cdot]$   and  $\pi$   have  the  same   support  in
$(0, \infty  )$, and  the same  atoms in  $(0, \infty  )$, and  thus for
$\delta>0$:
\begin{equation}
   \label{eq:pi(d)}
  \pi(\{\delta\})=0 \quad \Longleftrightarrow\quad
  \N^\psi[\Delta\geq \delta; \,  \cdot\, ]=
  \N^\psi[\Delta> \delta; \, \cdot\,].
\end{equation}

We follow~\cite{nassif2022} (notice   \eqref{eq:grey} is  not assumed
therein, but  one need them here  for $\ct$ to be  locally compact). Let
$\delta>0$     and      consider     a    random      marked     tree
$(\ctnr,  X_\delta)\in   \T_1$  whose   distribution is characterized by,  for  any
nonnegative measurable function $G$ defined on $\T_1$:
\begin{equation}
   \label{eq:EGTd}
  \E\left[G\left(\ctnr, X_\delta
  \right)\right]
=
\inv{\N^\psi[\sigma \ind_{\{\Delta\leq  \delta\}}]}
\N^\psi\left[ \ind_{\{
    \Delta\leq  \delta\}}\int_{\ct} \bm(\rd x) \, G(\ct, x)\right].
\end{equation}
Then define the random marked 
tree $(\ctn, X_\delta)\in \T_1$ by:
\begin{equation}
   \label{eq:Tnode*}
  \ctn=\ctnr \circledast (\ct'_{\{\delta\}},X_\delta),
\end{equation}
where   $\ct'_{\{\delta\}}$  is   independent  of   $(\ctnr,  X_\delta)$   and
distributed according to $\P^\psi_\delta( \cdot |\, \Delta\leq \delta)$.
By  construction, if  $\delta$ is  not  an atom  of $\pi$,  we get  that
$X_\delta$ is  the only vertex  of $ \ctn$  with size $\delta$  and that
$\Delta=\delta$. We thus get   $\wbr(\ctn,  X_\delta)=\ctnr$,   with  the
restriction  map $\wbr$  defined in~\eqref{eq:def-wbr}.   We denote  the
probability          distribution          of         $\ctn$          by
$\N^\psi[\rd \ct \, |\, \Delta=\delta]$; it is defined even for $\delta$
not  in   the  support  of   the  distribution  of  $\Delta$.    By  the
measurability of the grafting procedure, see~\cite{adhe-2022}, we deduce
that  the map  $\delta \mapsto  \N^\psi[\rd \ct  \, |\,  \Delta=\delta]$
(from $\R_+^*$  to the set of  probability measures on the  Polish space
$\T$)  is measurable.   According to~\cite[Theorem~5.7]{nassif2022},  if
$\pi\neq    0$     and $\pi$     \textbf{has    no     atom},    we
have:
\[
\int    _{\delta>0}    \N^\psi[\rd    \ct   \,    |\,    \Delta=\delta]
\,\N^\psi[\Delta=\rd \delta]=\N^\psi[\rd \ct].
\]
For this
reason, if $\pi \neq 0$ has no atom, 
we shall  say that $ \ctn$ \textbf{is distributed  as the L\'evy
  tree  $\ct$ conditioned  on having  one vertex  of maximal
  size} $\delta$.

\medskip

Now we consider  a Kesten  tree $\ck$ under $\P^\psi$.
We  define the
tree $\cknr$ as follows:  let $H'_\delta$ be the height
of the lower vertex on the spine  on  which is grafted a tree  with a vertex
(possibly  its root)  of size  larger or  equal than  $\delta$. Notice
that $H'_\delta$ is a.s.\ finite as $\pi([\delta,\infty ))>0$. 
Let $E$   be  under $\P^\psi$ an independent exponential random variable
with mean $1/ \psi'(0)$, with the convention that $E=+\infty $ in the
critical case, and set $H_\delta=\min (H'_\delta, E)$.  We also identify
$H_\delta$  with the vertex on the infinite spine of $\ck$ at distance
$H_\delta$ from the root. 
We define   $\cknr$ as the  closure of the connected
component containing the root of the  Kesten tree $\ck$ when the vertex
$H_\delta$ has been removed.  More formally, we have:
\begin{equation}
   \label{eq:def-cknr}
  \cknr
  = \II{0, H_\delta}  \circledast_{i\in
    I^{\node}_\delta}( \ct_i,h_i)
  \quad\text{with}\quad
  I^{\node}_\delta=\{i\in I\, \colon\, h_i <H_\delta\},
\end{equation}

Let $(h''_j,\ct''_j)_{j\in J}$ be the atoms of a Poisson point process on
$\R_+ \times \T$ with
intensity $\rd h \, \N^\psi[\rd \ct]$ and independent of the Kesten tree
$\ck$.  Recall  $\{\root\}$  denote   the trivial 
rooted  real tree reduced  to the  root.
We set for $\delta>0$:
\begin{equation}
   \label{eq:def-T[d]}
  \ct_{[\delta]}^\node=\{\root\}  \circledast_{j\in J^{\node}_\delta} (\ct''_j, \root)
\quad\text{with}\quad
J^{\node}_\delta=\{j\in J\, \colon\, h''_j\leq  \delta \quad\text{and}\quad
\Delta(\ct''_j)\leq  \delta \},
\end{equation}
so that $\ct_{[\delta]}^\node$ is independent  of $\ck$  and distributed  according to
$\P^\psi_\delta(  \cdot |\,  \Delta\leq \delta)$,  with $\P_\delta^\psi$
defined in Section~\ref{sec:meas-LT}.
We then denote by $\ckn$ the tree $\cknr$ on which the tree
$\ct_{[\delta]}^\node$ is  grafted at the top
$H_\delta$  of the  truncated  spine $\II{0,  H_\delta}$:
\begin{equation}
   \label{eq:def-ckn}
\ckn=  \cknr \circledast (\ct_{[\delta]}^\node,
H_\delta).
\end{equation}
Notice that $\wbr(\ckn, H_\delta)= \cknr$.
The next theorem is proved in Section~\ref{sec:proof-D}. 

\begin{theo}[Coupling for the  maximal size vertex conditioning]
  \label{theo:Delta}
  Assume~\eqref{eq:hyp-param}-\eqref{eq:critical}   hold and that  $\pi\neq 0$. Let  $\delta>0$ such that
  $\pi(\{\delta\})=0$ and $\bar \pi(\delta)>0$. 
Then,
  the Lévy tree  under $\N^\psi$ conditionned to have  maximal  vertex size
  $\delta$, $\ctn$,     is   distributed    as   the    truncated   Kesten    tree
  $\ckn$ under $\P^\psi$, more precisely, we have on $\T_1$:
\begin{equation}
   \label{eq:=en-loi-Td}
    (\ctn,X_\delta)\overset{\text{(d)}}{=}   (\ckn, H_\delta)
    \quad\text{and}\quad
    (\ctnr, X_\delta) \overset{\text{(d)}}{=}   (\cknr, H_\delta).
  \end{equation}  
\end{theo}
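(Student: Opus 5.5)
The plan is to prove the second identity in \eqref{eq:=en-loi-Td}, on $\cknr$, and then deduce the first. Indeed, $\ctn$ is obtained from $(\ctnr,X_\delta)$ by grafting at $X_\delta$ an independent tree with law $\P^\psi_\delta(\cdot\,|\,\Delta\le\delta)$, see \eqref{eq:Tnode*}. Likewise, $\ckn$ is obtained from $(\cknr,H_\delta)$ by grafting at $H_\delta$ the independent tree $\ct^\node_{[\delta]}$, which has the same law, see \eqref{eq:def-ckn}. Since the grafting map is continuous from $\T_1\times\T$ to $\T_1$, equality in law of the pointed trees gives the first identity.

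For the second identity, start from the definition \eqref{eq:EGTd} and write $\ind_{\{\Delta\le\delta\}}$ as a function $G$ of the pointed tree $(\ct,x)$. Then apply Corollary~\ref{cor:TX=Kh} in the sub-critical case. In the critical case, apply Proposition~\ref{prop:TX=Kh} integrated over $h$ with $\psi'(0)=0$, using $\bm(\rd x)=\int_0^\infty\ell^a(\rd x)\,\rd a$. This gives
\[
\N^\psi\Big[\ind_{\{\Delta\le\delta\}}\int_\ct \bm(\rd x)\,G(\ct,x)\Big]=\int_0^\infty \rd h\, \expp{-h\psi'(0)}\,\E^\psi\big[\ind_{\{\Delta(\wck_h)\le\delta\}}G(\wck_h,h)\big].
\]
By \eqref{eq:Kh}, $\Delta(\wck_h)\le\delta$ holds iff every tree $\ct_i$ grafted at $h_i\le h$ has no vertex of size $\ge\delta$, including its root. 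Here $\pi(\{\delta\})=0$ lets us disregard the difference between $>$ and $\ge$, by \eqref{eq:pi(d)} and the absence of atoms for the root sizes under $r\pi(\rd r)$. Since $(h_i,\ct_i)$ is Poisson with intensity $\rd h\,\bN^\psi$, the restriction map ensures this event is exactly $\{H'_\delta>h\}$. Moreover, on this event $\wck_h$ is the tree built only from the Poisson atoms in the restricted intensity $\rd h\,\bN^\psi[\cdot\,;\Delta<\delta]$, because the grafted trees have a.s.\ distinct heights.

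Next, identify the right-hand side with the law of $(\cknr,H_\delta)$. Set $\lambda_\delta=\bN^\psi[\Delta\ge\delta]$, which is finite and positive because $\bar\pi(\delta)>0$ and the number of vertices with size $\ge\delta$ is controlled. Restriction of Poisson measures shows that $H'_\delta$ is exponential with parameter $\lambda_\delta$. It also shows that, conditionally on $H'_\delta$, the atoms with $h_i<H'_\delta$ form a Poisson measure on $[0,H'_\delta)$ with intensity $\rd h\,\bN^\psi[\cdot\,;\Delta<\delta]$. Adding the independent $E$ with rate $\psi'(0)$, the variable $H_\delta=\min(H'_\delta,E)$ is exponential with rate $\lambda_\delta+\psi'(0)$ and independent of that Poisson measure $M'$ of good atoms. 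Hence
\[
\E^\psi[G(\cknr,H_\delta)]=(\lambda_\delta+\psi'(0))\int_0^\infty \rd h\,\expp{-h(\lambda_\delta+\psi'(0))}\,\E\big[G(\II{0,h}\circledast_{M'}\cdot\,,h)\big].
\]
Comparing with the display above, with $\P(H'_\delta>h)=\expp{-h\lambda_\delta}$, the two expressions coincide up to the constant. Taking $G\equiv1$ identifies the normalisation: $\N^\psi[\sigma\ind_{\{\Delta\le\delta\}}]=1/(\lambda_\delta+\psi'(0))$, so the laws agree.

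The main obstacle is the careful identification of the event $\{\Delta(\wck_h)\le\delta\}$ with $\{H'_\delta>h\}$, and the check that $\lambda_\delta$ is finite. One has to treat the root size of the trees under $\P^\psi_r$ (an atom at size $r$, with weight $r\pi(\rd r)$) together with the internal vertices. Finiteness should follow from $\bN^\psi[\Delta\ge\delta]\le \int_{[\delta,\infty)}r\pi(\rd r)+2\beta\N^\psi[\Delta\ge\delta]+\int r\pi(\rd r)\P_r^\psi(\Delta\ge\delta)$, where the last term is bounded by $\int r\pi(\rd r)\, r\N^\psi[\Delta\ge\delta]$. For this bound one needs $\int_{(1,\infty)} r\pi(\rd r)<\infty$, which holds in the (sub-)critical case, and \eqref{eq:ND>d} gives finiteness of $\N^\psi[\Delta\ge\delta]$. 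The remaining steps are routine applications of the monotone class theorem.
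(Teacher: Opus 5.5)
Your argument is correct, and it takes a genuinely different route from the paper's.

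\textbf{The paper's route.} It first turns the restriction into a change of branching mechanism. Using \cite[Eq.~(4.2)]{nassif2022} and the Girsanov identity \eqref{eq:Girsanov-sigma}, it shows that $\ind_{\{\Delta\le\delta\}}\N^\psi=\N^\phi$, with $\phi$ the sub-critical mechanism of \eqref{eq:def-phi}. It then applies Corollary~\ref{cor:TX=Kh} to $\phi$. Finally it checks by hand that $\bN^\psi[\cdot\,;\Delta<\delta]=\bN^\phi$ and $\bN^\psi[\Delta\ge\delta]=\phi'(0)-\psi'(0)$. This identifies $(\cknr,H_\delta)$ under $\P^\psi$ with $(\wck_E,E)$ under $\P^\phi$.

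\textbf{Your route.} You apply the spine decomposition of Proposition~\ref{prop:TX=Kh} directly to $\psi$, with the indicator $\ind_{\{\Delta\le\delta\}}$ absorbed into the test functional. You then perform the restriction on the Kesten side by Poisson thinning into good and bad atoms. This needs only two inputs: $\bN^\psi[\Delta=\delta]=0$, which follows from \eqref{eq:pi(d)} and $\pi(\{\delta\})=0$, and $0<\bN^\psi[\Delta\ge\delta]<\infty$. It avoids \cite[Eq.~(4.2)]{nassif2022} and the Girsanov transform entirely. It also treats the critical and sub-critical cases uniformly through the integrated form of Proposition~\ref{prop:TX=Kh}. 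Your normalisation via $G\equiv1$ gives the same constant as the paper: your $\lambda_\delta+\psi'(0)$ equals $\phi'(0)$.

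\textbf{What each buys.} Your approach is more elementary and self-contained. The paper's approach gives structural information in exchange. It identifies the restricted tree as a genuine sub-critical Lévy tree with mechanism $\phi$. It also gives the normalising constant in closed form:
\[
\N^\psi[\sigma\ind_{\{\Delta\le\delta\}}]=1/\phi'(0),
\quad\text{with}\quad
\phi'(0)=\psi'_{\{\delta\}}(\psi^{-1}_{\{\delta\}}(\bar\pi(\delta))).
\]

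\textbf{One small fix.} In your finiteness bound, use the estimate $\P^\psi_r(\Delta\ge\delta)\le r\N^\psi[\Delta\ge\delta]$ only for $r\in(0,\delta)$. There $\int_{(0,\delta)}r^2\pi(\rd r)<\infty$ by \eqref{eq:hyp-param}, whereas $\int r^2\pi(\rd r)$ over all $r$ may diverge. For $r\ge\delta$ the root already has size at least $\delta$, and that contribution is exactly your first term $\int_{[\delta,\infty)}r\pi(\rd r)$.
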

The  first equality
in~\eqref{eq:=en-loi-Td} is  a consequence of the second one and of~\eqref{eq:Tnode*}
and~\eqref{eq:def-ckn}.

\begin{rem}[Reconstruction of the L\'evy tree]
   \label{rem:recons-vertex}
 Let  $\ck$  be  an
independent   Kesten    tree.   Assume that
$\pi$   has    no   atom and 
$\pi(\R_+^*)=+\infty  $   (that  is,   $\N^\psi[\Delta=0]=0$). If 
 $\Delta$ is distributed as  $\N^\psi[\rd  \Delta(
\ct)]$  and
independent  of  $\ck$ and  of  the atoms $(h''_j,\ct''_j)_{j\in J}$ of the
independent  Poisson point process which appears in~\eqref{eq:def-T[d]},  we deduce  from
Theorem~\ref{theo:Delta} that $ \cknD$, defined as in~\eqref{eq:def-ckn}
with $\delta$ replaced by $\Delta$, is distributed as
the  unconditioned  Lévy  tree   $\ct$  under  $\N^\psi[\rd  \ct]$.  The
condition  $\pi(\R_+^*)=+\infty  $  can  be   removed  at  the  cost  of
considering  the event  $\{\Delta(\ct)=0\}$  which has  then a  positive
measure under $\N^\psi[\rd \ct]$.
\end{rem}

By construction, we have the following result on the coupled Kesten
sub-trees. Notice the increasing limit of $\ct^\node_{[\delta]}$ as $\delta$ goes
to infinity is not  locally compact as there is a condensation
phenomenon at the root; thus  the sequence
$(\ct^\node_{[\delta]})_{\delta>0}$  does not converge in $\T$. One would need to
enlarge the set $\T$ and change the topology for this sequence to
converge, see~\cite{Abraham2014a} in the discrete setting of
Bienaym\'e-Galton-Watson  trees. For this reason, we can only consider
local limit when $\lim_{\delta\rightarrow \infty } d^\ck(0,
H_\delta)=+\infty $, that is when the branching is critical.

\begin{prop}[Monotony and local convergence for the maximal size vertex coupling]
  \label{prop:delta}
  Assume~\eqref{eq:hyp-param}-\eqref{eq:critical}  hold  and  $\pi\neq 0$. If  $\delta'\geq \delta>0$ are
  not atoms of $\pi$ and  $\bar\pi(\delta')>0$, then we have a.s.\ that:
\begin{equation}
   \label{eq:mono-Kd}
  \cknr \subset
  \cknrp, \quad
  H_\delta\in \II{\root, H_{\delta'}}
    \quad\text{and}\quad
\ct^\node_{[\delta]}\subset \ct^\node_{[\delta']}.
  \end{equation}  
  Furthermore,  in  the  critical  case,   we have
  $\lim_{\delta\rightarrow \infty } d^\ck(0,
H_\delta)=+\infty $ and,  if the  support  of  $\pi$  is
  unbounded and $\pi $ has no atom,
  the following  convergences hold  in $(\T,  \dlgh)$:
\begin{equation}
   \label{eq:cv-Kd}
    \cknr \xrightarrow[\delta\rightarrow +\infty]{\text{a.s.}} \ck
   \quad\text{and}\quad
    \ckn \xrightarrow[\delta\rightarrow +\infty]{\text{a.s.}} \ck.
  \end{equation}  
\end{prop}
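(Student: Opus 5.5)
The monotonicity statements~\eqref{eq:mono-Kd} come directly from the construction on a single probability space: the Kesten tree $\ck$, the exponential variable $E$ and the Poisson point process $(h''_j,\ct''_j)_{j\in J}$ are the same for all $\delta$. For $\delta\le\delta'$, a grafted tree $\ct_i$ carrying a vertex of size $\ge\delta'$ also carries a vertex of size $\ge\delta$. Hence $H'_\delta\le H'_{\delta'}$, and so $H_\delta=\min(H'_\delta,E)\le H_{\delta'}$, which means $H_\delta\in\II{\root,H_{\delta'}}$. Then $I^\node_\delta=\{i: h_i<H_\delta\}\subset I^\node_{\delta'}$, so $\cknr\subset\cknrp$ by~\eqref{eq:def-cknr}. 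Similarly $J^\node_\delta\subset J^\node_{\delta'}$, since both conditions $h''_j\le\delta$ and $\Delta(\ct''_j)\le\delta$ are monotone in $\delta$; this gives $\ct^\node_{[\delta]}\subset\ct^\node_{[\delta']}$ by~\eqref{eq:def-T[d]}.

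In the critical case we have $E=+\infty$, so $H_\delta=H'_\delta$. The plan is to show $H'_\delta\to+\infty$ a.s. The trees grafted on the spine form a Poisson point measure with intensity $\rd h\,\bN^\psi[\rd\ct]$. So $H'_\delta$ is exponential with parameter $\lambda_\delta=\bN^\psi[\Delta\ge\delta]$, where for $\P^\psi_r$-trees the root size $r$ also counts. Using~\eqref{eq:def-bN} and the fact that $\P^\psi_r$ is a Poisson forest of $\N^\psi$-trees with intensity $r$, I expect
\[
\lambda_\delta=2\beta\N^\psi[\Delta\ge\delta]+\int_{[\delta,\infty)}r\,\pi(\rd r)+\int_{(0,\delta)}r\,\pi(\rd r)\bigl(1-\expp{-r\N^\psi[\Delta\ge\delta]}\bigr).
\]
Each term tends to $0$ as $\delta\to\infty$. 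The first tends to $0$ because $\N^\psi[\Delta\ge\delta]\le\N^\psi[\Delta>\delta/2]$, which is finite and decreases to $0$. The second is the tail of $\int r\wedge r^2\,\pi(\rd r)<\infty$, which holds under~\eqref{eq:critical}. The third tends to $0$ by dominated convergence with dominating function $r\wedge r^2$ up to constants, since $1-\expp{-rx}\le rx\wedge1$. Since $H'_\delta$ is non-decreasing in $\delta$ and $\P(H'_\delta\le a)=1-\expp{-a\lambda_\delta}\to0$ for each $a$, we get $H'_\delta\to\infty$ a.s. Unbounded support of $\pi$ is needed only so that $\bar\pi(\delta)>0$ for all $\delta$, which makes $H_\delta$ finite and the objects well defined.

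For the convergence~\eqref{eq:cv-Kd}, note that $\cknr$ is the restriction of $\ck$ to $A_\delta=\{(h,\ct): h<H_\delta\}$. Alternatively, and more directly, for every $a<H_\delta$ we have $R_a(\cknr)=R_a(\ck)$, because the spine up to $H_\delta$ and all trees grafted strictly below $H_\delta$ are kept. By~\eqref{eq:rh(t)}, $\dlgh(\cknr,\ck)\le\expp{-a}$ for any $a<H_\delta$, which tends to $0$. For $\ckn$, the tree $\ct^\node_{[\delta]}$ is grafted at height $H_\delta$, so again $R_a(\ckn)=R_a(\ck)$ for $a<H_\delta$, and the same bound applies. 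The main obstacle is the computation of $\lambda_\delta$, in particular handling the $\P^\psi_r$ part of $\bN^\psi$, where the root itself may have size $\ge\delta$. The decay estimate above should settle it; even a crude bound $\lambda_\delta\le C\bigl(\N^\psi[\Delta\ge\delta]+\int r\wedge r^2\,\ind_{\{r\ge\delta\}}\pi(\rd r)+\dots\bigr)$ tending to $0$ suffices.
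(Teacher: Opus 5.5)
Your proposal is correct and follows the same route as the paper. Monotonicity comes from $H'_\delta\le H'_{\delta'}$, and convergence comes from $R_a(\cknr)=R_a(\ckn)=R_a(\ck)$ for $a<H_\delta$ combined with \eqref{eq:rh(t)}. The one step the paper only asserts, $H_\delta\to+\infty$ a.s., you justify by showing that the exponential rate $\bN^\psi[\Delta\ge\delta]$ tends to $0$; your formula for this rate matches the paper's computation $\bN^\psi[\Delta\geq\delta]=\phi'(0)-\psi'(0)$ in the proof of Theorem~\ref{theo:Delta}.
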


\begin{proof}
  Equation~\eqref{eq:mono-Kd}
  is obvious as by construction $H'_\delta\leq  H'_{\delta'} $
  and thus $H_\delta\leq  H_{\delta'}$. 

  We now consider the critical case. Notice        that
  $R_a(\ckn)=R_a(\ck)$ for all $a$ 
  strictly   less than       $H_\delta $.  Then use~\eqref{eq:rh(t)} and
  that a.s.\
  $\lim_{\delta\rightarrow +\infty } H_\delta=+\infty $ to get 
  the convergences in~\eqref{eq:cv-Kd}. 
\end{proof}

We  deduce from Theorem~\ref{theo:Delta}
and Proposition~\ref{prop:delta} the following  well known result
from~\cite{nassif2022}.

\begin{theo}[Local convergence for the maximal size vertex conditioning]
\label{theo:cv-loc-size-vertex}
  Assume~\eqref{eq:hyp-param}-\eqref{eq:grey}  hold,  the regime is critical  (that is $\psi'(0)=0$),
  the L\'evy measure   $\pi$  has no atom and its support is
  unbounded.  The following 
local   convergence holds in distribution:
 \[
   \ctn  \xrightarrow[\delta\rightarrow +\infty]{\text{(d)}} \ck
     \quad\text{in}\quad
  (\T, \dlgh).
\]
\end{theo}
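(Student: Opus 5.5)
This follows directly by combining the coupling of Theorem~\ref{theo:Delta} with the almost sure convergence of Proposition~\ref{prop:delta}; the only work is checking that the hypotheses of both results hold for every large $\delta$. We work under~\eqref{eq:hyp-param}-\eqref{eq:grey} with $\psi'(0)=0$, so~\eqref{eq:critical} holds. The measure $\pi$ has unbounded support, so $\pi\neq 0$ and $\bar\pi(\delta)>0$ for every $\delta>0$. Since $\pi$ has no atom, $\pi(\{\delta\})=0$ for all $\delta>0$.

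Hence Theorem~\ref{theo:Delta} applies to every $\delta>0$ and gives $\ctn \overset{\text{(d)}}{=} \ckn$ as $\T$-valued random variables. This is obtained by forgetting the distinguished vertex; the forgetful map $\T_1\to\T$ is measurable, indeed continuous. Consequently, for any bounded continuous $F$ on $(\T,\dlgh)$ we have $\E[F(\ctn)]=\E^\psi[F(\ckn)]$.

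Next, Proposition~\ref{prop:delta} in the critical case, with unbounded support and no atom, yields $\ckn\to\ck$ a.s.\ in $(\T,\dlgh)$ as $\delta\to+\infty$. All these trees are built on the same probability space from the same Kesten tree $\ck$ together with the independent Poisson point measure in~\eqref{eq:def-T[d]}. Almost sure convergence implies convergence in distribution, so by dominated convergence $\E^\psi[F(\ckn)]\to\E^\psi[F(\ck)]$. Combining this with the identity from the previous paragraph gives $\ctn\to\ck$ in distribution.

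There is no genuine obstacle in this argument. The one point to watch is that the limit is taken along the continuous parameter $\delta$ rather than along a sequence. This causes no difficulty: the a.s.\ convergence in Proposition~\ref{prop:delta} comes from $R_a(\ckn)=R_a(\ck)$ for $a<H_\delta$, with $H_\delta$ nondecreasing in $\delta$ and tending to $+\infty$, so it holds uniformly along the full parameter $\delta$. Alternatively, it suffices to check convergence in distribution along every sequence $\delta_n\to\infty$.
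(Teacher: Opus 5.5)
Your proposal is correct and is exactly the paper's route. The paper deduces this theorem directly from the coupling of Theorem~\ref{theo:Delta}, which applies for every $\delta>0$ because $\pi$ has no atom and unbounded support, combined with the almost sure convergence $\ckn\to\ck$ in the critical case from Proposition~\ref{prop:delta}. Your checks of the hypotheses and your remark about the continuous parameter $\delta$ (using that $H_\delta$ is nondecreasing and tends to $+\infty$) are sound.
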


\begin{rem}[The sub-critical case]
  \label{rem:sub-critic-d}
  Notice   that~\eqref{eq:cv-Kd}   no   longer  holds   if   $\psi$   is
  sub-critical. Assume the support of $\pi$  is unbounded and $\pi $ has
  no  atom.  According  to~\cite[Theorem~1.6]{nassif2022}  the limit  of
  $\ctn$ is informally a tree with a condensation vertex at level given by
  an  exponential   random  variable   with  mean   $1/\psi'(0)$.   From
  Theorem~\ref{theo:Delta},           we           deduce           that
  $\lim_{\delta\rightarrow   +\infty   }    H_\delta=E   $,   and   thus
  $  \cknr \xrightarrow[\delta\rightarrow  +\infty]{\text{a.s.}}
  \wck_{E}
  $, where  $\wck_{E}=\wbr(\ck, E)$ is  the Kesten  tree whose spine  has been  cut at
  level   $E$.   Now   informally   the   increasing   random   sequence
  $(\ct^\node_{[\delta]})_{ \delta>0}$ converges  to a  (non locally  compact) tree
  with   condensation at the  root; and it  should then be  possible to
  give  sense  to  the  convergence  of $  \ckn$  towards  a  tree  with
  condensation at vertex  $E$.
\end{rem}

\subsection{Proof of Theorem~\ref{theo:Delta}}
\label{sec:proof-D}
We assume~\eqref{eq:hyp-param}-\eqref{eq:critical} and let $\delta>0$ be
such    that     $\pi(\{\delta\})=0$    and     $\bar    \pi(\delta)>0$,
see~\eqref{eq:def-pid}.    Notice   the   function   $\psi_{\{\delta\}}$
from~\eqref{eq:def-psi-d}                                      satisfies
also~\eqref{eq:hyp-param}-\eqref{eq:critical} and it is then a bijection
of $\R_+$.   We define  $\phi$ as  $\psi_\theta$ in~\eqref{eq:def-psi-q}
with  $\psi$   replaced  by  $   \psi_{\{\delta\}}$  and $\theta$ by:
\begin{equation}
  \label{eq:ND}
\psi_{\{\delta\}}^{-1}(\bar \pi(\delta))=\N^\psi[\Delta>  \delta]=\N^\psi[\Delta\geq  \delta]>0,
\end{equation} 
(where we used that  $\delta$ is not an atom of $\pi$ for the second equality)
that is, for $\lambda\geq 0$:
\begin{equation}
   \label{eq:def-phi}
  \phi(\lambda)= 
   \psi_{\{\delta\}}(\lambda+\psi_{\{\delta\}}^{-1}(\bar \pi(\delta))) - 
\bar \pi(\delta).
\end{equation}
The
characteristic 
$(\alpha_\phi, \beta_\phi, \pi_\phi)$ of $\phi$ are given by:
\[
  \alpha_\phi  =\phi'(0)= \psi_{\{\delta\}}'(\N^\psi[\Delta \geq  \delta])> 0,\quad
  \beta_\phi  = \beta
  \quad\text{and}\quad
    \pi_\phi (\rd r)
  = \ind_{\{r< \delta\}}\, \expp{- r\N^\psi[\Delta\geq  \delta]} \, \pi (\rd r).
\]
In particular the
branching mechanism $\phi$ is sub-critical.
We deduce
from~\eqref{eq:Girsanov-sigma} that:
\begin{equation}
  \label{eq:Girsanov-T}
  \N^{\psi_{\{\delta\}}} \left[ \expp{-\bar \pi(\delta)  \sigma} F(\ct)\right]
  =  \N^{\phi} \left[  F(\ct)\right]. 
\end{equation}

Recall  we only need to prove the second equality
in~\eqref{eq:=en-loi-Td}.  Let  $F$ be a measurable nonnegative function
defined on $\T$. Using~\cite[Eq.~(4.2)]{nassif2022} for the first
equality and~\eqref{eq:Girsanov-T} for the second, we
have:
\begin{equation}
   \label{eq:(4.2)}
  \N^\psi\left[\ind_{\{\Delta \leq
  \delta\}} \, F(\ct)
  \right]
  =
  \N^{\psi_{\{\delta\}}}\left[\expp{- \bar\pi(\delta)  \sigma}
   \, F(\ct )
 \right]
 = \N^\phi[F(\ct)].
\end{equation}

The distribution of the marked random
tree $(\ctnr, X_\delta)\in \T_1$, with
$X_\delta$ the only vertex of size $\delta$ in $\ctn$
is characterized by~\eqref{eq:EGTd}. 
We have:
\[
  \N^\psi\left[\ind_{\{\Delta \leq
  \delta\}} \, \int_\ct \bm (\rd  x) \, G(\ct, x)
  \right] 
=  \N^{\phi}\left[ \int_\ct \bm (\rd  x) \, G(\ct, x)
     \right]
= \N^\phi[\sigma]\,  \E^{\phi}\left[  G(\ck_E, E)
    \right],
\]
where    we    used~\eqref{eq:(4.2)}    for    the    first    equality,
and   Corollary~\ref{cor:TX=Kh}  for  the  second  with, under $\P
^\phi$,   $E$  an
exponential  random  variable  with mean  $\N^\phi[\sigma]=1/  \phi'(0)$
independent of the Kesten tree $\ck$.

\medskip

Recall  the function  $\phi$ depends  on $\delta$.  Taking $G=1$  in the
previous equality gives:
\[
 \N^\psi \left[ \sigma \, \ind_{\{\Delta \leq
     \delta\}}  \right]
 =\N^\phi[\sigma].
 \]
So we deduce from~\eqref{eq:EGTd} that the proof of the second equality
in~\eqref{eq:=en-loi-Td} is complete, once we prove that:
\begin{equation}
   \label{eq:TBP}
  \E^\psi\left[G(\cknr, H_\delta)\right]
  =
   \,  \E^{\phi}\left[ G(\ck_{E}, {E})
\right].
\end{equation}

\medskip

Consider the Kesten tree $\ck$ under $\E^\psi$. The intensity of
grafting a tree having a vertex with size larger than $\delta$ is $\rd h
\,\bN^\psi[ \Delta\geq \delta]$ with  $\bN^\psi$
given in~\eqref{eq:def-bN}, so the height $H'_\delta$ at which is grafted a
tree with maximal vertex size larger than $\delta$   is an exponential random
variable with parameter:
\begin{align*}
  \bN^\psi[ \Delta\geq \delta]
  &= 2\beta \N^\psi[\Delta \geq \delta] + 
  \int_0^{+ \infty } r \pi(\rd r)\P_r^\psi (\Delta \geq  \delta)\\
&  =
  2  \beta \N^\psi[\Delta \geq \delta]
  +\int_{(0,\delta) } r \pi(\rd
  r) \left(1 - \expp{-r \N^\psi [\Delta\geq  \delta]}\right)
+
  \int_{[\delta, \infty )} r\pi(\rd r)\\
  &= \psi'_\delta(\N^\psi[\Delta \geq \delta]) - \psi'(0)\\
 &= \phi'(0) - \psi'(0),
\end{align*}
where we used~\eqref{eq:def-psi-d}  for the third 
equality with $\pi(\{\delta\})=0$, and~\eqref{eq:def-phi} 
 for the last.
We deduce that $\min(H'_\delta, E)$ with $E$ independent of $\ck$ and
distributed under $\P^\psi$ as an exponential random variable with mean
$1/\psi'(0)\in (0, \infty ]$ is an exponential random variable with mean
$1/\phi'(0)\in (0, \infty )$, which is the distribution of $E$ under
$\P^\phi$. 

Since  $\bar \pi(\delta)>0$, the random tree $\cknr$ conditionally on $H_\delta$
is distributed under $\P^\psi$ as the cut spine  $\II{0, H_\delta}$ on which are grafted trees:
\[
  \cknr
  \, \overset{\text{(d)}}{=} \,
  \II{0, H_\delta} \circledast_{i\in
    I'}( \ct'_i,h'_i),
\]
where  $(h'_i, \ct'_i)_{i\in  I'}$  are  the atoms  of  a Poisson  point
measure      on      $\R_+      \times      \T$      with      intensity
$\rd  h \,  \bN^\psi[\rd  \ct; \,  \Delta<\delta]$.
So to prove the second equality in distribution
of~\eqref{eq:=en-loi-Td}, we deduce from~\eqref{eq:TBP} that  it is enough to
check that the grafting intensity on the spine $\II{0, H_\delta}$ for $\cknr$,
that is $\bN^\psi[\rd \ct \, ;\, \Delta<\delta]$, coincides with 
 the grafting intensity on the spine $\II{0, E}$ for $\ck$ under $\P^\phi$,
that is, with $\bN^\phi[\rd \ct ]$.

Recall  $\P^\phi_r$   (with  $\psi$   instead  of  $\phi$)   defined  in
Section~\ref{sec:meas-LT} denotes  the distribution  of the  random tree
$\ct_{[r]}=\{\root\}   \circledast_{i\in   I''_r}(    \ct''_i,   \root)$,   where
$(h''_i, \ct''_i)_{i\in I''}$ are the atoms of a Poisson point measure
on $\R_+ \times \T$ with intensity $\rd h \, \bN^\phi[\rd \ct]$
and $I''_r=\{i\in I''\, \colon\, h''_i\leq r\}$. 
We first notice that  for $G$ a measurable nonnegative function on $\R_+
\times \T$ and $r>0$:
  \begin{align*}
  \E_r^\phi\left[\expp{-\sum_i G(h_i, \ct_i)}\right]
  & = \exp\left\{- \int_0^r \rd s\,  \N^\phi [1 - \expp{-G(s,
    \ct)}]\right\}\\
  &= \exp\left\{- \int_0^r \rd s\,  \N^\psi [(1 - \expp{-G(s,
    \ct)})\ind_{\{\Delta< \delta\}}]\right\}\\
  &=  \exp\left\{- \int_0^r \rd s\,  \N^\psi [1 - \expp{-G(s,
    \ct)}\ind_{\{\Delta< \delta\}}] +r \N^\psi
    [\Delta\geq \delta]\right\}\\
    &= \expp{r \N^\psi
    [\Delta\geq \delta]} \E^\psi_r \left[\expp{-\sum_i G(h_i,
      \ct_i)\ind_{\{\Delta(\ct^i) < \delta\}}}\right],
  \end{align*}
  where we used~\eqref{eq:(4.2)} for the second equality.
Arguing similarly as in the proof of~\eqref{eq:Ph'=ecPh}
given by the computations in~\eqref{eq:compute!}, we deduce 
 that for $r\in (0,\delta)$:
\[
  \P ^\psi_r
    (\rd \ct;  \,  \Delta<\delta)= 
 \expp{-r \N^\psi[\Delta\geq \delta]}  \P^\phi_r
 (\rd \ct).
\]
Then, using~\eqref{eq:(4.2)}, we obtain:
\begin{align*}
  \bN^\psi[\rd  \ct; \,  \Delta<\delta]
  &= 2 \beta \N^\psi[\rd \ct; \,  \Delta<\delta] + \int_{(0,
    +\infty )} r\pi(\rd r) \, \P ^\psi_r
    (\rd \ct;  \,  \Delta<\delta)\\
  &= 2 \beta \N^\phi[\rd \ct] + \int_{(0, \delta)} r\pi(\rd r)\,
    \expp{-r \N^\psi[\Delta\geq \delta]}  \P^\phi_r
    (\rd \ct)\\
  &= 2 \beta \N^\phi[\rd \ct] + \int_{(0, \delta)} r\pi_\phi(\rd r)\,
    \P^\phi_r    (\rd \ct)\\
  &=  \bN^\phi[\rd  \ct].
\end{align*}
This concludes the proof of the  second equality
in~\eqref{eq:=en-loi-Td}.
(We already noticed that the first equality in~\eqref{eq:=en-loi-Td} is a
direct consequence of the second one.)

\section{Lévy tree with a  given mass}

\subsection{Subordinator and Bernstein function}
\label{sec:bernstein}

Let $\P^\varphi_x$ denote the  distribution of a 
  subordinator $\bS=(S_t)_{t\geq 0}$ starting at  $x\geq 0$ at time $0$, that
  is $S_0=x$ a.e., with general  Laplace     exponent $\varphi$: 
\begin{equation}
   \label{eq:varphi=}
  \varphi(\lambda)=  \kappa   \lambda + \int  _{(0,  +\infty   )}  (1
  -\expp{-\lambda r})\,  \nu(\rd r) \quad \text{for $\lambda\geq 0$},
\end{equation}
  where the drift $ \kappa$ belongs to $ \R_+$ and the L\'evy measure 
$\nu$    is    a   measure    on    $(0,    +\infty   )$    such    that
$\int_{(0, +\infty )}  (1 \wedge r) \, \nu(\rd r)$  is finite. In
particular, we have  for $\lambda, t\geq 0$ that:
\[
  \E^\varphi_x[\expp{-\lambda S_t}]= \expp{- t
  \varphi(\lambda) - \lambda x}. 
\]
We recall that:
\begin{equation}
   \label{eq:kappa=phi/lambda}
 \lim_{\lambda \rightarrow \infty }
 \frac{\varphi(\lambda)}{\lambda}= \kappa
 .
\end{equation}
The
corresponding  $\alpha$-potential measure  $\pot^{(\alpha)}$  on $\R_+$,
for $\alpha\geq 0$, is defined  by
$\pot^{(\alpha)}(A)=\int_0^\infty \rd t \, \expp{-\alpha t} \P(S_t\in A)$ for all
Borel sets $A\subset \R_+$. Its Laplace transform is given by:
\begin{equation}
   \label{eq:Lap-pot}
  \int_{[0, \infty )} \expp{-\lambda r}\, \pot^{(\alpha)} (\rd r)
  = \inv{\alpha +\varphi(\lambda)}
  \quad\text{for all $\lambda\geq 0$.}
\end{equation}
We simply write $\pot$ for the $0$-potential $\pot^{(0)}$.

\begin{rem}
   \label{rem:Bernstein}
   We recall that a Bernstein  function, say $\varphi_0$, is the Laplace
   exponent   of  a   killed  subordinator   (that  is,   of  the   form
   $\varphi_0=\varphi+c_0$, with $c_0\geq 0$)  and that $\varphi_0$ is a
   special  Bernstein  function  if   the  function  $\phi$  defined  by
   $\phi(\lambda)=\lambda/  \varphi_0(\lambda)$  is   also  a  Bernstein
   function, see~\cite[Sec.~11]{ssrv:bernstein}.

   Provided that $\bS\not  \equiv 0$, the Laplace  exponent $\varphi$ of
   the subordinator $\bS$ is a special Bernstein function if and only if
   the $0$-potential measure $\pot(\rd r)$  of $ \varphi$ can be written
   as   $\mathrm{c}  \delta_0   (\rd  r)   +  u(r)   \,  \rd   r$,  with
   $\mathrm{c} \geq  0$ (and  $\delta_0$ the  Dirac mass  at 0)  and the
   nonnegative   density   function   $u$   defined   on   $\R_+^*$   is
   non-increasing.          Thus          we          have          that
   $u(\infty )=  1/(\kappa + \int  _{(0, \infty  )} r \nu(\rd  r))$, and
   also,   if  $\kappa>0$   or  $\nu(\R_+^*)=+\infty   $,  that   $c=0$,
   $\lim_{r\rightarrow         0+}         r         u(r)=0$         and
   $\lim_{r\rightarrow 0+} \int_0^r u(s) \, \rd s=0$.
\end{rem}

\subsection{The subordinator conditioned to die at a given level}
\label{sec:sub-die-hard}
In this section, we  follow \cite{krs:csemp}.  Let $\bS=(S_t)_{t\geq 0}$
be     a    subordinator     with     Laplace     exponent given
by~\eqref{eq:varphi=}. 
 We assume
that    $\varphi(+\infty   )=+\infty    $,    that    is   $\kappa>0$    or
$\nu((0, 1))=+\infty  $.
Notice  that the $0$-potential $\pot=\pot^{(0)}$ is well defined
as for any $\lambda>0$:
\[
  \pot([0, x])=\int_0^\infty \rd t \, \P^\varphi(S_t \leq  x) \leq  \int_0^\infty
  \rd t \, \E^\varphi[\expp{\lambda (x-S_t)}] =\inv{\varphi(\lambda)} \expp{\lambda
    x}< +\infty .
\]
 Notice also  that $\varphi(+\infty  )=+\infty $
implies  that $\P^\varphi(S_t=0)=0$ for all $t>0$ and thus
$\P^\varphi(S_t=a)=0$ for all $a>0$, see~\cite{b:levy} p.~30.  
For   $\alpha\geq   0$,   we   deduce   that   the   potential   measure
$\pot^{(\alpha)}$ has no  atom.  We recall that  if $\pot^{(\alpha)}$ is
absolutely  continuous  w.r.t.\  Lebesgue  measure on  $\R_+$  for  some
$\alpha\geq  0$,  then  it  is absolutely  continuous  w.r.t.\  Lebesgue
measure on $\R_+$ for any $\alpha\geq 0$.

\medskip

The results from~\cite{krs:csemp} are stated for $\alpha=0$, but they
can be extended to the case $\alpha\geq 0$ with minor modifications of
the proofs which we shall omit. 
We   now   fix   $\alpha\geq   0$  and   assume   that   the   potential
$\pot^{(\alpha)}$ has  a continuous  density $u^{(\alpha)}$  w.r.t.\ the
Lebesgue measure on $(0, \infty )$, see~\cite[Hyp. (DA)]{krs:csemp} when
$\alpha=0$.

Let $r>0$. Following Lemma~2.4  therein,  the  process $(M^r_t)_{t\geq
  0}$ defined by:
\begin{equation}
   \label{eq:def-super-mart}
M_t^r= \expp{-\alpha t} u^{(\alpha)} (r   -S_t)   \ind_{\{
  S_t<r\}}
\end{equation}
is   a 
super-martingale. 
Using that the support of the distribution of $S_t$ is $\R_+$ for $t>0$,
 see~\cite{tucker:75}, it  is  then  easy  to  check  that  the  density
$u^{(\alpha)}$  is   positive  on  $(0,  +\infty   )$.   
So,  we can define  the following h-transform  of the
Lévy  process  $\bS$   with  lifetime  $\zeta^r$    and
starting at $x\in [0, r)$ under $\P^{\varphi, r}_x$
defined   by,  for  all
$t\in  [0, \infty  )$,  $A \in  \cf_t=\sigma((S_s)_{s\in  [0, t]})$:
\begin{equation}
   \label{eq:cond-S-die}
  \P^{\varphi,r}_x(A, t<\zeta^r)=\expp{-\alpha t}  \E^\varphi_x\left[
\frac{u^{(\alpha)}(r-S_t)}{u^{(\alpha)}(r-x)} \ind_{\{ S_t <  r\}}\, \ind_ A
\right].
\end{equation}

Since $\P^\varphi_x$-a.s.\ $\lim_{t\rightarrow \infty } S_t=+\infty$, we get that $\lim_{t\rightarrow
  \infty } \P^{\varphi, r}_x (t< \zeta^r)=0$, and thus  the lifetime $\zeta^r$ is a.s.\ finite. 
Under $\P_x^{\varphi, r}$, the process $\bS$ is started at $x$, killed at
time $\zeta^r$ and 
continuously absorbed at $r$, that is $\P_x^{\varphi, r}$ a.s.\ $S_{\zeta^r-}=r$.
Indeed, for $y\geq 0$, set $\tau_y=\inf\{t\geq 0 \, \colon\, S_t >y\}$.
Extending  Theorem~2.5 in~\cite{krs:csemp} given for $\alpha=0$ to any $\alpha\geq 0$, we also get that for $0\leq  x < b<
r$, $t\geq 0$  and $A\in \cf_t$:
\begin{equation}
  \label{eq:P-phi,r_x}
  \P^{\varphi, r}_x(A, t<\tau_b)=
  \lim_{\varepsilon \downarrow 0 }
  \frac{
    \E^\varphi_x\left[  \expp{-\alpha \tau_r}; \, A, t< \tau_b ,  S_{\tau_r-} \geq  r-\varepsilon
     \right]}
  {
    \E^\varphi_x\left[  \expp{-\alpha \tau_r}; \,   S_{\tau_r-} \geq  r-\varepsilon\right]}
  \cdot
\end{equation}
For $\alpha=0$, see~\cite{krs:csemp}, this reduces to:
\[
  \P^{\varphi, r}_x(A, t<\tau_b)=
  \lim_{\varepsilon \downarrow 0 }
  \P^\varphi_x( A, t< \tau_b \, |\, S_{\tau_r-} \geq  r-\varepsilon).
\]
Eventually, we simply write $\P^{\varphi,r}$ for $\P^{\varphi,r}_x$
when $x=0$.

\subsection{On the density of $\sigma$ under the excursion measure}
\label{sec:density-sigma}

We assume~\eqref{eq:hyp-param}-\eqref{eq:critical}  and thus $\psi'(0)\geq 0$, which corresponds to
the  (sub-)critical case. 
We shall consider the associated critical branching mechanism $\psicr$
defined by:
\[
  \psicr(\lambda)=\psi(\lambda)- \psi'(0)\lambda.
\]
Notice that $\psi'$ and $\psicr'$ are Bernstein functions as for $\lambda \geq 0$:
\[
  \psi'(\lambda)=\psi'(0) +
  \psicr'(\lambda)
 \quad\text{and}\quad
 \psicr'(\lambda)=
  2\beta \lambda + \int_{(0, \infty )}
  \left(1-\expp{-\lambda r}\right) \, r\pi(\rd r).
\]

Let $\bV=(V_t)_{t\geq 0}$ be a  subordinator  with Laplace
exponent $\psicr'$.
Let $\bT=(T_t)_{t\geq 0}$ be a subordinator independent
of $\bV$ with no killing term, no drift and Lévy measure $\N[\rd \sigma]$, that is,
with Laplace exponent $\psi^{-1}$ by~\eqref{eq:N[1-S]}.
 The process $\bS=(S_t(\omega)=T_{V_t(\omega)}(\omega))_
{t\geq 0}$ is thus a subordinator with Laplace exponent:
\begin{equation}
   \label{eq:def-phi2}
  \varphi= \psicr' \circ \psi^{-1}.
\end{equation}

\begin{rem}
  \label{rem:kappa=0}
  Since $\varphi$ is the Laplace exponent of the subordinator $\bS$, it can be
  written as in~\eqref{eq:varphi=}. Notice
  that~\eqref{eq:hyp-param}-\eqref{eq:critical} imply that $\psi(+\infty
  )=\psicr'(+\infty )=\varphi(+\infty )=+\infty $. 
Using that $x(1-\expp{-x}) \geq \expp{-x}-1+x \geq  x(1-\expp{-x})/2$
for $x\geq 0$,
we get that for $\lambda\geq 0$:
\begin{equation}
   \label{eq:psi-psi'}
\psi(\lambda) \leq  \lambda \psi'(\lambda) \leq  2 \psi(\lambda).
\end{equation}
This implies that:
\begin{equation}
   \label{eq:cv-phi-psi}
  \lim_{\lambda\rightarrow \infty } \frac{\varphi(\lambda)}{\lambda}
  = \lim_{\lambda\rightarrow \infty } \frac{\psi'(\lambda)}{\psi(\lambda)}=0,
\end{equation}
and thus $\kappa=0$ by~\eqref{eq:kappa=phi/lambda}. We deduce 
that $\nu((0, 1))=\infty $ and 
from~\cite{b:levy} p.~30 that $\P(S_t=0)=0$ for all $t>0$ and thus
$\P(S_t=a)=0$ for all $a>0$.  In particular, we get that the
corresponding $\alpha$-potential has no atom on $\R_+$. 
\end{rem}

We denote by $\pot^{(\alpha)}$ the $\alpha$-potential associated to
$\bS$. 

\begin{lem}
  \label{lem:U=N}
  Under~\eqref{eq:hyp-param}-\eqref{eq:critical}, with
  $\alpha=\psi'(0)\geq 0$, we have for any  Borel set
  $A  \subset \R_+$ that:
  \[
\pot^{(\alpha)}(A)=\N^\psi[\sigma \ind_A(\sigma)].
\]
In particular the      measure
  $\N^\psi[\rd \sigma]$ has  no atom on $[0, \infty )$.
\end{lem}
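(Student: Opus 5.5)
We compare Laplace transforms: both sides are finite measures on $\R_+$ up to exponential tilting, so identifying them on $\lambda\mapsto \expp{-\lambda r}$ for $\lambda$ large enough suffices.

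\emph{The potential side.} By~\eqref{eq:Lap-pot}, with $\varphi=\psicr'\circ\psi^{-1}$ from~\eqref{eq:def-phi2}, we have for $\lambda>0$:
\[
\int_{[0,\infty)} \expp{-\lambda r}\,\pot^{(\alpha)}(\rd r)=\inv{\alpha+\psicr'(\psi^{-1}(\lambda))}=\inv{\psi'(\psi^{-1}(\lambda))},
\]
where we use $\alpha=\psi'(0)$ and $\psi'=\psi'(0)+\psicr'$.

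\emph{The excursion-measure side.} Differentiate~\eqref{eq:N[1-S]} in $\lambda>0$. Differentiation under $\N^\psi$ is justified by monotone convergence, since $\sigma\expp{-\lambda\sigma}$ is the increasing limit of the difference quotients, or by dominated convergence using $(1-\expp{-\varepsilon\sigma})/\varepsilon\le \sigma$ locally. This gives
\[
\N^\psi[\sigma\expp{-\lambda\sigma}]=(\psi^{-1})'(\lambda)=\inv{\psi'(\psi^{-1}(\lambda))}.
\]
Here $\psi^{-1}$ is differentiable because $\psi$ is strictly convex and increasing on $[0,\infty)$ under~\eqref{eq:variation} and~\eqref{eq:critical}, so $\psi'(\psi^{-1}(\lambda))>0$ for $\lambda>0$.

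\emph{Identification.} The two finite measures $\expp{-\lambda_0 r}\pot^{(\alpha)}(\rd r)$ and $\expp{-\lambda_0 r}\N^\psi[\sigma\in \rd r]\,r$ have the same Laplace transform on $(0,\infty)$ for any fixed $\lambda_0>0$, so they coincide. Hence $\pot^{(\alpha)}(\rd r)=r\,\N^\psi[\sigma\in\rd r]$ on $\R_+$. For the atom at $0$, note that $\N^\psi[\sigma\ind_{\{0\}}(\sigma)]=0$ trivially, and by Remark~\ref{rem:kappa=0} $\pot^{(\alpha)}$ has no atom; the identity on $[0,\infty)$ is therefore consistent. This proves $\pot^{(\alpha)}(A)=\N^\psi[\sigma\ind_A(\sigma)]$.

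\emph{No atoms.} Remark~\ref{rem:kappa=0} gives that $\pot^{(\alpha)}$ has no atom, so $r\,\N^\psi[\sigma\in\rd r]$ has no atom on $(0,\infty)$, and hence neither does $\N^\psi[\rd\sigma]$ there. At $0$, $\N^\psi[\sigma=0]=0$ by~\eqref{eq:N[1-S]}.

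The only delicate point is the differentiation under $\N^\psi$, which is a $\sigma$-finite measure. The bound $|\partial_\lambda(1-\expp{-\lambda\sigma})|=\sigma\expp{-\lambda\sigma}\le \sigma\expp{-\lambda_1\sigma}$ for $\lambda\ge\lambda_1>0$, together with the fact that $\N^\psi[\sigma\expp{-\lambda_1\sigma}]<\infty$ (which follows from convexity of $\psi^{-1}$'s derivative bound, or by Fatou applied to the difference quotients), settles it.
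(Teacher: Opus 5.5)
Your proof is correct and follows the paper's argument: you match the Laplace transform $1/\psi'\circ\psi^{-1}(\lambda)$ of $\pot^{(\alpha)}$ with $\partial_\lambda \N^\psi[1-\expp{-\lambda\sigma}]=\N^\psi[\sigma\expp{-\lambda\sigma}]$, and you get the absence of atoms from Remark~\ref{rem:kappa=0} together with $\N^\psi[\sigma=0]=0$. You also supply details the paper leaves implicit, namely monotone convergence for the difference quotients and exponential tilting to handle the possibly infinite measure $\pot^{(0)}$; only the aside about ``convexity of $\psi^{-1}$'s derivative bound'' is garbled ($\psi^{-1}$ is concave), and the monotone-convergence/Fatou argument you also give already suffices.
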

\begin{proof}
   Using that $\N^\psi[1- \expp{-\lambda
  \sigma}]=\psi^{-1}(\lambda)$, we get that for $\lambda>0$:
\begin{equation}
   \label{eq:U-sigma}
  \int _{[0, \infty )} \expp{-\lambda r}\, \pot^{(\alpha)}(\rd r)= \int_0^\infty
    \expp{-\alpha t} \, \E\left[\expp{-\lambda S_t}\right] \, \rd t= \inv{ \psi'\circ
      \psi^{-1} (\lambda)} = \partial_\lambda \N^\psi[1 - \expp{-\lambda
      \sigma}]=\N^\psi\left[\sigma \, \expp{-\lambda \sigma}\right],
  \end{equation}
  and thus  $\pot^{(\alpha)} (A)=\N^\psi[\sigma \ind_A(\sigma)]$  for any  Borel set
  $A  \subset \R_+$.
By  Remark~\ref{rem:kappa=0}, 
 the measure $\pot^{(\alpha)}$ has no atom
  on   $\R_+$,   and   thus,    as  $\N^\psi[\sigma=0]=0$ by~\eqref{eq:N[1-S]},    the
  excursion   measure
  $\N^\psi[\rd \sigma]$ has  no atom on $[0, \infty )$. 
\end{proof}

 Notice 
  from~\eqref{eq:U-sigma} that  $\sigma$ under  $\N^\psi$ has  a density
  w.r.t.\  the  Lebesgue measure  on $[0,  \infty )$ if  and only  if the
  $\alpha$-potential $\pot^{(\alpha)}$ of  $\bS $ has  a density  w.r.t.\ the Lebesgue  measure on
  $(0, \infty )$.

\begin{rem}[The critical stable case]
  \label{rem:stable}
 We consider the 
critical stable case $\psi(\lambda)=\lambda^a$ with $a\in (1,
2]$ (and thus $\psi'(0)=\alpha=0$). From~\eqref{eq:U-sigma}, we deduce that, with $u=u^{(0)}$ the
density of $\pot^{(0)}$ on $(0, +\infty )$:
\[
  f_\sigma(r)= \inv{a  \Gamma\left(\frac{a-1}{a} \right)\, r^{(a+1)/a}}
  \quad\text{and}\quad
  \frac{u(r-x)}{u(r)}=  \left( \frac{r}{r-x}\right)^{1/a}
  \quad\text{for}\quad r>x>0.  
\]
\end{rem}

\subsection{The Lévy tree  conditioned  by the total mass}
\label{sec:Levy-cond-mass}

We  assume~\eqref{eq:hyp-param}-\eqref{eq:critical}, and we refer to
Section~\ref{sec:mass-super} for the super-critical case.   For  simplicity we
write $\sigma$ for  the total mass $ \bm(\ct)$ of  the L\'evy tree $\ct$
(see  Section~\ref{sec:LT}),  and the  tree  $\ct$  which is  implicitly
considered   in   $\sigma$   shall    be   clear   from   the   context.
By~\cite[Corollary~1.29]{k:rm}, there  exists a  regular version  of the
conditional   distribution    of   $\ct$    given   its    total   mass,
$\N^\psi[\rd \ct  \, |\, \sigma=r]$ for  $r>0$, and let $\ctm$  denote a
$\T$-valued random  variable with  this distribution.  In what follows we shall  give a
nice representation of $\ctm$ and of $\N^\psi[\rd \ct \, |\, \sigma=r]$.

\begin{rem}[The stable case]
  \label{rem:stable-height}
  Assume  that $\psi(\lambda)= \lambda^a$ with $a\in (1,
  2]$. It is possible to give an explicit construction of
  $\ctm$, see~\cite[Sec.~3.2]{Duquesne2005b}. In particular, we
  have the following scaling property for $r>0$:
  \begin{equation}
      \label{eq:scaling-alpha}
    (\ctm,  d, \root)
  \, \overset{\text{(d)}}{=} \,
    (\ctmun,  r^{1-1/a} d,  \root).
 \end{equation}
 \end{rem}

  We can also represent the Kesten tree as:
\begin{equation}
   \label{eq:rep-K-sub}
  \ck= \spine  \circledast_{i\in
    I}( \ct^{\mass}_{\sigma_i},h_i),
\end{equation}
where $(h_i,  \sigma_i, \ct^{\mass}_{\sigma_i})_{i\in I}$ are  the atoms
of   a    Poisson   point    measure   on   $\R_+^2$    with   intensity
$\rd h \,  \bN^\psi[ \sigma\in \rd r]\,  \bN^\psi[\rd \ct|\sigma=r]$.  The
process $\bS^\ck=(S^\ck_h)_{h>0}$ defined by:
\begin{equation}
   \label{eq:rep-S}
  S^\ck_h=\sum_{h_i\leq  h}  \sigma_i  ,
\end{equation}
is, thanks to~\eqref{eq:bN-moment},
a  subordinator with Laplace exponent:
\begin{equation}
  \label{eq:def-phi3}
  \varphi=\psicr'\circ
\psi^{-1}.
\end{equation}
When  there   is  no   ambiguity,  we  shall   simply  write   $\bS$  for
$\bS^\ck$. Notice that $\varphi(0)= 0$.

\medskip Notice that the distribution of $\sigma$ under $\N^\psi$ has no atom
by  Lemma~\ref{lem:U=N} and  $\N[\sigma=+\infty  ]=0$  as the  branching
mechanism is (sub-)critical.  We further assume in this section that the
distribution  of the  total  mass  $\sigma$ on  $(0,  +\infty  )$ has  a
continuous density, say $f_\sigma$, w.r.t.\ the Lebesgue measure:
\begin{equation}
   \label{eq:densite}\tag{H5}
   \boxed{\N^\psi[ \sigma\in\rd r]= f_\sigma(r)\, \rd r
     \quad\text{and}\quad
\text{$f_\sigma$ is continous on $\R_+^*$}.}
\end{equation}
In         particular,        Assumption~\eqref{eq:densite}         (and
also~\eqref{eq:decroissant} below)  is satisfied in the  critical stable
case    $\psi(\lambda)=\lambda^a$    with    $a\in    (1,    2]$,    see
Remark~\ref{rem:stable}.

\medskip

Under~\eqref{eq:densite}, we deduce from Remark~\ref{rem:kappa=0} and Lemma~\ref{lem:U=N} that the
$\alpha$-potential $\pot^{(\alpha)}$ of $\bS $  has no Dirac mass at $0$
and  has a  continuous density,  say  $u^{(\alpha)}$,  w.r.t.\ the  Lebesgue
measure      on     $(0,      \infty      )$;      thus     we      have
$\pot^{(\alpha)}(\rd    r)=u^{(\alpha)}(r)\,    \rd    r$    and    from
Lemma~\ref{lem:U=N} that:
\begin{equation}
  \label{eq:r*pi=u}
    r f_\sigma(r)= u^{(\alpha)}(r)
  \quad\text{for all}\quad
  r\in (0, +\infty ).
\end{equation}

\medskip

We define  a modified Kesten tree  $\ckm$ as follows. Let  $r>0$ and let
$\bS^r$    be    with     distribution    $\P^{\varphi,    r}_0$    from
Section~\ref{sec:sub-die-hard},  that  is,  when $\alpha=0$  defined  as
$\bS$    be     continuously    absorbed    at    $r$.      Denote    by
$(h'_i,  \sigma'_i)_{i\in  I'}$  the  jumping times  and  the  jumps  of
$\bS^r$, and $\zeta^r$  the finite lifetime of $\bS^r$.  We consider the
$\T$-valued random variable defined by:
\begin{equation}
   \label{eq:def-ckm}
  \ckm= \II{0, \zeta^r}   \circledast_{i\in
    I'}( \ct^{\mass}_{\sigma'_i},h'_i),
\end{equation}
where        the        compact        rooted        random        trees
$(  \ct^{\mass}_{\sigma'_i})_{i\in  I'}$  are conditionally  on  $\bS^r$
independent  with  $\ct^{\mass}_{\sigma'_i}$  distributed  according  to
$\bN^\psi[\rd \ct |  \sigma=\sigma'_i]$ for all $i\in  I'$. In particular,
we see $\zeta^r$ as  a leaf of $\ckm$, and $(\ckm,  \zeta^r)$  as
an element  of $\T_1$.  We  shall denote  by $\P^{\varphi,r}$ the  distribution of
$(\ckm, \zeta^r)$.

 For $h< \zeta ^r$,
we shall  see $h$ as the  vertex of the branch  $\II{\root, \zeta^r}$ at
distance $h$ from  the root, as well as the  vertex of the semi-infinite
spine  of the  Kesten tree  $\ck$ at  distance $h$  from the  root.  From  the
representation~\eqref{eq:rep-K-sub} of the subordinator $\bS^\ck$ from the
Kesten tree $\ck$, notice that $S^\ck_h$ is the total mass
of the random tree $\wbr(\ck,h)$, the Kesten tree with the spine
truncated at level $h$, see~\eqref{eq:def-wbr}; it is a.s. larger than the total
mass  of  $\br_h(\ck)$,  the  Kesten  tree  truncated  at level  $h$;  the
difference comes from the fact that  $S^\ck_h$ counts also the mass above $h$
of the sub-trees grafted on the semi-infinite spine below level $h$.

\begin{rem}[On the distributions of $\ck$ and $\ckm$]
   \label{rem:version-reg}
We recall that for $(\bt, x)\in \T_1$, a tree $\bt$ with a distinguished
vertex $x\in \bt$, one can also see $x$ as a distinguished vertex of the
truncated tree
$\wbr(\bt, x)$, which is then considered as an element of $ \T_1$. 
Using~\eqref{eq:cond-S-die}, for any nonnegative measurable  function $G$ defined
  on   the  set   $\T_1$,  we   have,   with  $\bS^\ck$ given
  by~\eqref{eq:rep-S} and     the
  representation~\eqref{eq:rep-K-sub} of  the Kesten tree, that, with
  $\alpha=\psi'(0)\geq 0$,    for all
  $h>0$:
 \begin{equation}
   \label{eq:K=Sr+graft}
  \E^ {\varphi, r} \left[ \ind_{\{ \zeta^r > h\}}\, G(\wbr( \ckm, h) )
  \right] \, =\,   \expp{-\alpha h}
 \E^\psi\left[\frac{u^{(\alpha)}(r-S^\ck_h)}{u^{(\alpha)}(r)} \, \ind_{\{ S^\ck_h < r\}}\,
    G(\wbr( \ck, h))
     \right] .
  \end{equation} 
Notice that the right hand-side is a measurable function of $r\in (0, +\infty
)$ which is lower semi-continuous as the non-decreasing limits as $n\to +\infty$ of the
continuous functions $r \mapsto   \expp{-\alpha h}
 \E^\psi\left[\frac{n \wedge u^{(\alpha)}(r-S^\ck_h)}{u^{(\alpha)}(r)} \, \ind_{\{ S^\ck_h < r\}}\,
    G(\wbr( \ck, h))
     \right] $ for $n\in \N$ (the continuity of the latter functions is a consequence
     of the continuity of $u^{(\alpha)}$ and the fact that
     $\P^\psi$-a.s.\ $S^\ck_h\neq r$). 
\end{rem}

The proof of the next result is given in
Section~\ref{sec:proof-theo-mass}. 

\begin{theo}[A first representation for the mass conditioning]
  \label{theo:mass}
  Assume~\eqref{eq:hyp-param}-\eqref{eq:densite}  hold. For  $r\in (0,  +\infty )$, let $\ctm$ be the Lévy tree
  under $\N^\psi$  conditioned to have total  mass $r$, $U^r$ a  leaf of
  $\ctm$  chosen uniformly  (w.r.t.\  the probability  measure
  $r^{-1}  \bm^{\ctm}(\rd x)$)  and  on $\{\bH(U^r)\geq  h\}$ denote  by
  $U^r_h$ the  vertex of  $\II{\root, U^r}$ at  distance $h$  from the
  root of $\ctm$.   For any nonnegative measurable  function $G$ defined
  on   the  set   $\T_1$,  we   have,   with  $\bS^\ck$ given
  by~\eqref{eq:rep-S} and    the
  representation~\eqref{eq:rep-K-sub} of  the Kesten tree, that  $\rd
  r$-a.e., for all
  $h>0$:
 \begin{align}
   \label{eq:K=Tmass}
  \N^\psi_r\left[  \ind_{\{\bH(U^r)>h\}}\, 
   G( \wbr(\ctm, U_h^r)) \right]
       &\,   =\,   \E^ {\varphi, r} \left[ \ind_{\{ \zeta^r > h\}}\, G(\wbr( \ckm, h) )
  \right] .
  \end{align} 
\end{theo}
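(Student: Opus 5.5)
Since the right-hand side of~\eqref{eq:K=Tmass} has already been identified in~\eqref{eq:K=Sr+graft}, the plan is to prove that, $\rd r$-a.e.,
\[
\N^\psi_r\left[\ind_{\{\bH(U^r)>h\}}\, G(\wbr(\ctm, U^r_h))\right]
= \expp{-\alpha h}\,\E^\psi\left[\frac{u^{(\alpha)}(r-S^\ck_h)}{u^{(\alpha)}(r)}\,\ind_{\{S^\ck_h<r\}}\, G(\wbr(\ck,h))\right],
\]
with $\alpha=\psi'(0)$. This is a ``disintegration in $r$'' statement. It suffices to integrate both sides against $g(r)\,u^{(\alpha)}(r)\,\rd r = g(r)\, r f_\sigma(r)\,\rd r$ for an arbitrary nonnegative measurable $g$ on $(0,+\infty)$, and to check that the two integrals agree. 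Equality $\rd r$-a.e.\ then follows for each fixed $h$ and $G$. Since both sides are monotone in $h$ in a suitable sense and the relevant $\sigma$-fields are countably generated, restricting to rational $h$ and a countable determining class of $G$ then gives equality for all $h$ simultaneously off a single null set.

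\emph{Left side.} By definition of the regular conditional law and of $U^r$,
\[
\int \rd r\, r f_\sigma(r)\, g(r)\,\N^\psi_r\left[\ind_{\{\bH(U^r)>h\}}G(\wbr(\ctm,U^r_h))\right]
=\N^\psi\left[\int_\ct \bm(\rd x)\, g(\sigma)\,\ind_{\{d(\root,x)>h\}}\, G(\wbr(\ct,x_h))\right],
\]
where $x_h$ is the ancestor of $x$ at height $h$. Using $\bm(\rd x)=\int_0^\infty \ell^a(\rd x)\,\rd a$, the right-hand side becomes
\[
\int_h^\infty \rd a\; \N^\psi\left[\int \ell^a(\rd x)\, g(\sigma)\, G(\wbr(\ct,x_h))\right].
\]
Proposition~\ref{prop:TX=Kh} then turns each inner term into
\[
\expp{-a\alpha}\,\E^\psi\left[g(\sigma(\wck_a))\, G(\wbr(\wck_a,h))\right].
\]
Here $\wbr(\wck_a,h)=\wbr(\ck,h)$ since $h\le a$, and $\sigma(\wck_a)=S^\ck_a$.

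\emph{Conditioning and substitution.} Condition on $\wbr(\ck,h)$ and use the independent increments of the Poisson structure: $S^\ck_a=S^\ck_h+(S^\ck_a-S^\ck_h)$, where the increment is independent of $\wbr(\ck,h)$ and distributed as $S_{a-h}$. This yields
\[
\expp{-\alpha h}\,\E^\psi\left[G(\wbr(\ck,h))\int_0^\infty \rd t\, \expp{-\alpha t}\,\E\left[g(x+S_t)\right]_{x=S^\ck_h}\right].
\]
The inner integral equals $\int g(x+y)\,u^{(\alpha)}(y)\,\rd y=\int_{(x,\infty)} g(r)\,u^{(\alpha)}(r-x)\,\rd r$. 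Fubini then produces exactly the integral of the right-hand side against $g(r)\,u^{(\alpha)}(r)\,\rd r$.

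\emph{Main obstacle.} The delicate step is the measurability and identification $\wbr(\wck_a,h)=\wbr(\ck,h)$ as elements of $\T_1$, with the vertex $U^r_h$ corresponding to $h$ on the spine. The decomposition along the branch $\II{\root,x}$ used in the proof of Proposition~\ref{prop:TX=Kh} handles this: $\wbr(\ct,x_h)$ is a measurable functional of the pointed tree $(\ct,x)$, and under the identification it becomes the spine cut at $h$ with the subtrees grafted below $h$. Passing from ``for each $h$, $\rd r$-a.e.'' to ``$\rd r$-a.e., for all $h$'' will use the monotone class argument above together with the right-continuity in $h$ of both sides for continuous bounded $G$.
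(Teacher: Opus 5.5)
Your proof is correct. It uses the same ingredients as the paper's proof:
\begin{itemize}
\item the occupation formula $\bm=\int_0^\infty \ell^a\,\rd a$;
\item the spine decomposition of Proposition~\ref{prop:TX=Kh};
\item independence of the spine Poisson measure below and above level $h$;
\item the identity $u^{(\alpha)}(r)=rf_\sigma(r)$;
\item the $h$-transform~\eqref{eq:cond-S-die}, through~\eqref{eq:K=Sr+graft}.
\end{itemize}
What differs is the execution. The paper tests only against $\expp{-\lambda r}$ in $r$, and only against multiplicative functionals $G(T,x)=\exp(-\sum_i G_0(d(\root,x_i),T_i))$ of the branch decomposition. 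It adds an auxiliary weight $\expp{-\gamma \bH(U^r)}$ (resp.\ $\expp{-\gamma\zeta^r}$) and computes both sides in closed form with the Poisson exponential formula. It then concludes by injectivity of the Laplace transform and a monotone class argument in $G$.

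You instead apply Proposition~\ref{prop:TX=Kh} directly to $g(\sigma)\,G(\wbr(\ct,x_h))$ and carry the left-hand side into the right-hand side for arbitrary $g$ and $G$. This removes the Laplace inversion, the restriction to exponential functionals and the parameter $\gamma$. It also makes transparent that the weight $u^{(\alpha)}(r-S^\ck_h)/u^{(\alpha)}(r)$ is simply the $\alpha$-potential density of the spine mass added after level $h$. In exchange, the paper's computation gives explicit formulas for both sides.

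On passing to all $h>0$ off a single null set, which the paper leaves implicit: the monotonicity you mention is the right tool. For rational $q>h$, the pointed tree $\wbr(\ctm,U^r_h)$ is a measurable function of $\wbr(\ctm,U^r_q)$. Moreover $\{\bH(U^r)>q\}\uparrow\{\bH(U^r)>h\}$ as $q\downarrow h$, so monotone convergence suffices once the identity holds for all rational levels and a countable determining class of $G$.

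Do not invoke right-continuity of $h\mapsto \wbr(\ctm,U^r_h)$. That map is only left-continuous: it jumps from the right when $U^r_h$ is a branch point of the ancestral line. For a version of $\N^\psi_r$ that is only defined $\rd r$-a.e., such points cannot be excluded a priori at every height.
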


For  an  other local  absolute  continuity
representation, we also refer to
\cite[Proposition~3]{clg:bp} in  the critical quadratic case
($\psi(\lambda)=\beta \lambda^2$)
using the contour  process of the L\'evy tree (called  in this quadratic
case the  continuum Brownian tree)  given by the Brownian  excursion and
the  contour process  of  the  corresponding Kesten  tree  given by  two
independent 3-dimensional Bessel processes, and to 
\cite[Theorem~4]{cub:bridges-11} in the stable non-quadratic case
($\psi(\lambda)=\lambda^a$ with $a\in (1,2)$) with a representation of
the subordinator killed continuously at level $r$ (that is, under
$\P^{\varphi, r}_0$) using scaling of
bridges.

\medskip

We  deduce  the  following  corollary  using  first  the  regularity  of
$r\mapsto  \E^ {\varphi,  r} \left[  \ind_{\{ \zeta^r  > h\}}\,  G(\wbr(
  \ckm, h)  ) \right] $,  see Remark~\ref{rem:version-reg} and  that the
distribution  of   $(  \ctm,  U^r)$  under   $\N^\psi_r$  is  completely
determined               by                the               functionals
$ \N^\psi_r\left[ \ind_{\{\bH(U^r)>h\}}\, G( \wbr(\ctm, U_h^r)) \right]$
when $h$ ranges  over $\R_+$ and $G$ ranges over  the set of nonnegative
bounded measurable functions defined on $\T_1$.

\begin{cor}[Representation for the mass conditioning]
  \label{cor:mas}
  Assume~\eqref{eq:hyp-param}-\eqref{eq:densite}  hold. Let  $r\in (0,  +\infty )$.
The distribution of 
$(\ck^{\mass}_{r}, \zeta^r)$ under $\P^{\varphi, r}$ is a regular
version of the distribution of $(   \ctm, U^r)$, where 
 the random tree  $\ctm$ is distributed  as the  Lévy tree
  under  $\N^\psi$ conditioned  to  have mass  $r$, and 
  $U^r$ is  a leaf  of  $\ctm$  chosen uniformly  (that is, w.r.t.\ the probability
  measure $r^{-1} \bm^{\ctm}(\rd x)$). 
\end{cor}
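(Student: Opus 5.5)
The plan is to derive Corollary~\ref{cor:mas} from Theorem~\ref{theo:mass} by a regular-version argument, upgrading the $\rd r$-a.e.\ identity to an equality of kernels. Set $Q_r=\P^{\varphi,r}$, the law of $(\ckm,\zeta^r)$ on $\T_1$. First I would check that $r\mapsto Q_r$ is a probability kernel from $\R_+^*$ to $\T_1$. The most direct route is~\eqref{eq:K=Sr+graft}: for each $h$ and $G$, the map $r\mapsto Q_r[\ind_{\{\zeta^r>h\}}G(\wbr(\ckm,h))]$ is given by an explicit measurable (indeed lower semi-continuous) function of $r$, by Remark~\ref{rem:version-reg}. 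A monotone class argument then gives measurability of $r\mapsto Q_r[F]$ for all bounded measurable $F$ on $\T_1$, once we have the determining-class fact below.

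The key structural step is that the family of functionals
\[
F_{h,G}(\bt,x)=\ind_{\{d(\root,x)>h\}}\,G\bigl(\wbr(\bt,x_h)\bigr),
\]
with $x_h$ the point of $\II{\root,x}$ at distance $h$ from the root, $h\in\R_+$ and $G$ bounded measurable on $\T_1$, determines probability measures on $\T_1$ carried by $\{(\bt,x)\colon x\in\Lf(\bt)\}$. I would argue as follows. As $h\uparrow d(\root,x)$, the truncated pointed tree $(\wbr(\bt,x_h),x_h)$ converges to $(\bt,x)$ in $\dlghu$, since $x$ is a leaf and the part removed is the sub-tree above $x_h$, which shrinks to $\{x\}$. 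Taking $h$ along a sequence and $G$ continuous and bounded, we can recover $\E[\Phi(\bt,x)]$ for every bounded continuous $\Phi$ on $\T_1$ by dominated convergence. On the event $\{d(\root,x)=0\}$ the tree is degenerate, and this event is null for both laws, because $\sigma$ has no atom at $0$ and $\zeta^r>0$ a.s. Both $(\ctm,U^r)$ (since $\bm$ is carried by the leaves) and $(\ckm,\zeta^r)$ (since $\zeta^r$ is a leaf) satisfy the leaf condition. I expect this approximation step to be the main obstacle. Its measurability and continuity details rely on~\cite{adhe-2022} (measurability of $\wbr$ and the restriction maps), and I would handle it via the decomposition along the branch $\II{\root,x}$ used in the proof of Proposition~\ref{prop:TX=Kh}.

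Finally, I would conclude as follows. By Theorem~\ref{theo:mass}, for each $(h,G)$ in a countable determining subfamily (rational $h$ and $G$ from a countable convergence-determining class, available since $\T_1$ is Polish), the identity $\N^\psi_r[F_{h,G}(\ctm,U^r)]=Q_r[F_{h,G}]$ holds for $\rd r$-a.e.\ $r$. Being countably many, these hold simultaneously off a single Lebesgue-null set. Off this set the laws coincide by the determining-class step. Since $\N^\psi[\sigma\in\rd r]=f_\sigma(r)\,\rd r$ is absolutely continuous by~\eqref{eq:densite}, modifying a regular conditional distribution on a Lebesgue-null set still yields a regular version. Hence $r\mapsto Q_r$ is a regular version of the conditional law of $(\ct,U)$ given $\sigma=r$, with $U$ sampled from $\sigma^{-1}\bm$. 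Projecting onto the first coordinate gives the corresponding statement for $\ctm$.
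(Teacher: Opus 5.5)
Your proposal is correct and follows essentially the paper's route: the paper likewise combines the $\rd r$-a.e.\ identity of Theorem~\ref{theo:mass}, the measurability in $r$ coming from Remark~\ref{rem:version-reg}, and the fact that the functionals $\ind_{\{\bH(U^r)>h\}}G(\wbr(\ctm,U^r_h))$ determine the law of the marked tree. The paper only asserts this determining property, whereas you sketch why it holds and correctly note that it relies on the marked point being a leaf. One point needs fixing: $h$ is deterministic in $F_{h,G}$, so you cannot literally let $h\uparrow d(\root,x)$. Instead, discretize. Write $\ind_{\{k/n<d(\root,x)\le (k+1)/n\}}\Phi(\wbr(\bt,x_{k/n}),x_{k/n})$ as the difference of two functionals of your family, using that the level-$k/n$ truncation is a measurable function of the level-$(k+1)/n$ one. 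Then sum over $k$ and let $n\to\infty$.
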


\begin{rem}[Scaling properties of $\ckm$]
    \label{rem:scaling}
In the stable critical case $\psi(\lambda)=\lambda^a$ with $a\in (1, 2]$, a regular version of $\N_r^\psi$ can be obtained by scaling using~\eqref{eq:scaling-alpha}, see Remark~\ref{rem:stable-height}.
Let us check it also coincides with the one given by~\eqref{eq:K=Tmass}.

First, 
we have  $\varphi(\lambda) =a \lambda^{1- 1/a}$. Thus,  the subordinator $\bS$ enjoy the scaling property $r^{-1} S_\bullet \stackrel{(d)}{=} S_{(r^{-1+1/a}\bullet)}$. 
By~\eqref{eq:cond-S-die} with $\alpha=0$, the process $\bS^r$ started from 0 enjoy a scaling property, see the formula for the potential densities ratio in Remark~\ref{rem:stable}: $(S^r_t, t \in [0, \zeta_r)) \stackrel{d}{=} (rS^1_{r^{-1+1/a} \,t}, t\in [0, r^{1- 1/a}\,\zeta_1))$. In particular $\zeta^r$ and $r^{(1-1/a)}\,  \zeta^1$ have the same distribution. Furthermore, using~\eqref{eq:scaling-alpha}, the grafted trees in $\ckm$ enjoy also a similar scaling property (notice that the masses $\sigma'_j$ in $\ckm$ are distributed as the masses $r \sigma'_j$ in $\ckmun$). 
Then using the definition of $\ckm$ from~\eqref{eq:def-ckm}, we deduce that $(\ckm, d, \root)$ is distributed as $(\ckmun, r^{1-1/a} d, \root)$, which is in agreement with~\eqref{eq:scaling-alpha}. 
So the right-hand side of~\eqref{eq:K=Tmass} enjoys the same scaling property in $r$ as the left hand-side. 
This implies that the regular version of the conditional distribution of $\ctm$ given by scaling and the one given by~\eqref{eq:K=Tmass} coincide.  

\end{rem}

\subsection{Convergence for the conditioning by the total
  mass in the critical case}
We assume in this section that~\eqref{eq:hyp-param}-\eqref{eq:grey} and~\eqref{eq:densite} hold, that the regime is critical
(that is, $\alpha=\psi'(0)=0$) as well as a regularity condition on the
density $f_\sigma$ of $\sigma$ under $\N^\psi$:
\begin{equation}
   \label{eq:decroissant}\tag{H6}
   \boxed{\psi'(0)=0 \quad \text{and the function  $r\mapsto rf_\sigma(r)$ is non-increasing
on $(0, \infty )$.}}
\end{equation}

We simply write  $u$ for $u^{(0)}$, the density of  the potential of the
subordinator  with  Laplace  exponent $\varphi=  \psi'\circ  \psi^{-1}$.

\begin{rem}[On Assumption~\eqref{eq:decroissant}]
   \label{rem:u-Bernstein}
   Recall         we         assume~\eqref{eq:hyp-param}-\eqref{eq:grey}
   and~\eqref{eq:densite}.                   Notice                 that
   Assumption~\eqref{eq:decroissant}       is      equivalent,       by~
   \eqref{eq:r*pi=u},  to   the  function   $u$  being   continuous  and
   non-increasing on $(0, \infty )$.  By Remark~\ref{rem:Bernstein}, the
   function $u$  (or equivalently the function  $r\mapsto rf_\sigma(r)$)
   is  then  non-increasing  if  and  only if  $\varphi$  is  a  special
   Bernstein function, that is $\lambda/\varphi(\lambda)$ is the Laplace
   exponent of a subordinator.  (Notice  this subordinator is not killed
   as
   $\lim_{\lambda\rightarrow                                         0+}
   \lambda/\varphi(\lambda)=\lim_{\lambda\rightarrow         \emptyset+}
   \psi(\lambda)/\psi'(\lambda)=0$ by~\eqref{eq:psi-psi'}.)

  Assumption~\eqref{eq:decroissant} holds  in
   particular   in  the   critical  stable  case,   see  also
   Remark~\ref{rem:stable}.
\end{rem}

We now state the main result of this section. 

\begin{prop}[Strong local convergence for the mass conditioning]
  \label{prop:mass}
  Assume~\eqref{eq:hyp-param}-\eqref{eq:decroissant} hold (in particular the branching meachnism $\psi$  is
  critical, that is $\psi'(0)=0$). Let $F$ be bounded nonnegative measurable
  function on $\T$. We have:
 \begin{equation}
   \label{eq:main-ctm}
\lim_{r\rightarrow \infty } \N^\psi_r\left[  
 F( \br_h(\ctm)) \right]  
=\lim_{r\rightarrow \infty } \E^{\varphi, r}\left[  
 F( \br_h(\ckm)) \right]  
=\E^\psi\left[  F(\br_h( \ck)) \right].
\end{equation}
\end{prop}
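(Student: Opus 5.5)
We split the proof in two parts: first the identification $\N^\psi_r[F(\br_h(\ctm))]=\E^{\varphi,r}[F(\br_h(\ckm))]$, which holds for all $r$ with the regular version given by Corollary~\ref{cor:mas}; then the limit of the right-hand side as $r\to\infty$ for fixed $h>0$. Since $\N^\psi_r$ is taken to be the law of $\ckm$ (the regular version), the first equality in~\eqref{eq:main-ctm} is immediate. It therefore suffices to prove that $\E^{\varphi,r}[F(\br_h(\ckm))]$ converges to $\E^\psi[F(\br_h(\ck))]$.

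\textbf{Step 1 (reduction to the spine).} We use that $\br_h(\ckm)$ is a functional of $\wbr(\ckm,h)$ on the event $\{\zeta^r>h\}$. Indeed, the part of the tree above the spine vertex $h$ lies at distance $>h$ from the root, so $\br_h(\wbr(\ckm,h))=\br_h(\ckm)$. The same identity holds for $\ck$. Applying~\eqref{eq:K=Sr+graft} with $\alpha=0$ and $G(\bt,x)=F(\br_h(\bt))$ gives
\[
\E^{\varphi,r}\left[\ind_{\{\zeta^r>h\}} F(\br_h(\ckm))\right]=\E^\psi\left[\frac{u(r-S_h)}{u(r)}\ind_{\{S_h<r\}}F(\br_h(\ck))\right],
\]
with $S=S^\ck$. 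It remains to control this term and the complementary term on $\{\zeta^r\le h\}$.

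\textbf{Step 2 (Fatou lower bound).} Under~\eqref{eq:decroissant}, $u$ is non-increasing and positive, so $u(r-S_h)/u(r)\ge 1$ on $\{S_h<r\}$. Since $S_h<\infty$ a.s., $\ind_{\{S_h<r\}}\to1$. Fatou's lemma then gives, for any bounded nonnegative $F$,
\[
\liminf_{r\to\infty}\E^{\varphi,r}[F(\br_h(\ckm))]\ \ge\ \liminf_{r\to\infty}\E^{\varphi,r}[\ind_{\{\zeta^r>h\}}F(\br_h(\ckm))]\ \ge\ \E^\psi[F(\br_h(\ck))].
\]

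\textbf{Step 3 (upper bound by complement).} Let $M$ be a bound for $F$ and apply Step 2 to $M-F\ge0$. This yields
\[
\liminf_r\E^{\varphi,r}[(M-F)(\br_h(\ckm))]\ge \E^\psi[(M-F)(\br_h(\ck))].
\]
Both $\E^{\varphi,r}$ and $\E^\psi$ are probability measures, so the constant $M$ cancels and we obtain $\limsup_r\E^{\varphi,r}[F(\br_h(\ckm))]\le\E^\psi[F(\br_h(\ck))]$. Combined with Step 2, this proves~\eqref{eq:main-ctm}.

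As a by-product, taking $F\equiv1$ in Step 2 gives $\liminf_r \P^{\varphi,r}(\zeta^r>h)\ge1$, so $\P^{\varphi,r}(\zeta^r\le h)\to0$. This confirms that the contribution of the event $\{\zeta^r\le h\}$ is negligible.

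The main obstacle is the justification in Step 1 that $\br_h$ factors through $\wbr(\cdot,h)$, as a measurable map on $\T_1$. This follows from the measurability of $\wbr$ and the continuity of $\br_h$ recalled in Section~\ref{sec:T-T1}. One should also check that~\eqref{eq:K=Sr+graft} holds for every $r$ and not just $\rd r$-a.e.; this is exactly how $\ckm$ is defined, since~\eqref{eq:cond-S-die} holds for every $r$. Everything else is Fatou's lemma plus the complement trick, which gives convergence without needing any uniform integrability of $u(r-S_h)/u(r)$.
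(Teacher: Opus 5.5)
Your proof is correct and follows essentially the paper's argument. It uses the same ingredients: the identity \eqref{eq:K=Sr+graft} with the regular version from Corollary~\ref{cor:mas}, the bound $u(r-S^\ck_h)/u(r)\ge 1$ coming from \eqref{eq:decroissant}, Fatou's lemma, and the complement trick for probability measures. The only difference is bookkeeping. You work directly with $F(\br_h(\cdot))$ and drop $\ind_{\{\zeta^r>h\}}$ using $F\ge 0$, so the complement step absorbs $\{\zeta^r\le h\}$ automatically. The paper instead first proves the limit for pointed functionals $G(\wbr(\ctm,U^r_h))$ on $\{\bH(U^r)>h\}$, extracts $\lim_r\N^\psi_r[\bH(U^r)>h]=1$, and only then passes to $F\circ\br_h$.
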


\begin{proof}
 By~\eqref{eq:decroissant}, we get that the $0$-potential density $u$ exists and is
 non-increasing, so we get:
  \begin{equation}
   \label{eq:ineg-u/u}
   \frac{u(r-S^ \ck_h)}{u(r)} \, \ind_{\{ S_h^\ck < r\}}\geq   \ind_{\{ S_h^\ck <
    r\}}
  \quad\text{and thus}\quad
  \liminf_{r \rightarrow \infty } \frac{u(r-S_h^\ck)}{u(r)} \, \ind_{\{ S_h^\ck
   < r\}}
  \geq 1.
  \end{equation}
Let $G$ be a
  nonnegative measurable function on $\T_1$. By Fatou lemma, we deduce
  from~\eqref{eq:K=Sr+graft}, \eqref{eq:K=Tmass}  and~\eqref{eq:ineg-u/u}
  that:
 \[
  \liminf_{r\rightarrow \infty }
    \N^\psi_r\left[  \ind_{\{\bH(U^r)>h\}}\, 
      G( \wbr(\ctm, U_h^r)) \right]
    \geq \E^\psi\left[    G(\wbr( \ck, h))
  \right]. 
\]
Since $\N^\psi_r[\bH(U^r) > h]\leq 1$  and,  by the previous inequality
with $G=1$, 
$ \liminf_{r\rightarrow \infty } \N^\psi_r[\bH(U^r) > h]\geq 1$, we get
that:
\begin{equation}
   \label{eq:NrH>h}
  \lim_{r\rightarrow \infty } \N^\psi_r[\bH(U^r) > h]= 1.
\end{equation}
Now assume that $G$ is also bounded by $1$ and set $G^*=1-G$. We get:
\begin{align*}
   \limsup_{r\rightarrow \infty } 
\N^\psi_r\left[  \ind_{\{\bH(U^r)>h\}}\, 
  G( \wbr(\ctm, U_h^r)) \right]
  &= 1-\liminf_{r\rightarrow \infty }
    \N^\psi_r\left[  \ind_{\{\bH(U^r)>h\}}\, 
  G^*( \wbr(\ctm, U_h^r)) \right]\\
&\leq  1-  \E^\psi\left[    G^*(\wbr( \ck, h)) \right]\\
&=  \E^\psi\left[    G(\wbr( \ck, h)) \right].
 \end{align*}
This implies that for any  bounded nonnegative measurable function $G$ on
$\T_1$:
\[
 \lim_{r\rightarrow \infty } 
\N^\psi_r\left[  \ind_{\{\bH(U^r)>h\}}\, 
  G( \wbr(\ctm, U_h^r)) \right]  
=\E^\psi\left[    G(\wbr( \ck, h)) \right],
\]
and thus  for any $h\geq 0$ and any  bounded nonnegative measurable function $F$ on
$\T$ such that $F(\bt)=0$ on $\bH(\bt)<h$:
\[
 \lim_{r\rightarrow \infty } 
\N^\psi_r\left[  
 F( \br_h(\ctm)) \right]  
=\E^\psi\left[  F(\br_h( \ck)) \right]. 
\]
Then use Corollary~\ref{cor:mas} to get
$\N^\psi_r\left[  
 F( \br_h(\ctm)) \right]  =  \E^{\varphi, r}\left[  
 F( \br_h(\ckm)) \right]  $
so that~\eqref{eq:main-ctm} holds for $F$ such that
$F(\bt)\ind_{\{\bH(\bt)<h\}}=0$. 
To conclude use Corollary~\ref{cor:mas} and then~\eqref{eq:NrH>h} to get
that
$\lim_{r \rightarrow  \infty }  \E^{\varphi, r} [\bH(\ckm)  < h]=\lim_{r
  \rightarrow \infty } \N^\psi_r [\bH(\ctm) < h]=0 $.
\end{proof}

We get in particular the following (weaker) result.

\begin{theo}[Local convergence for the mass conditioning in the critical
  case]
\label{theo:cv-loc-mass-crit}
  Assume~\eqref{eq:hyp-param}-\eqref{eq:decroissant} hold (in particular the branching meachnism $\psi$  is
  critical, that is $\psi'(0)=0$).
The following 
local   convergence holds in distribution:
 \[
   \ctm  \xrightarrow[r\rightarrow +\infty]{\text{(d)}} \ck
     \quad\text{in}\quad
  (\T, \dlgh).
\]
\end{theo}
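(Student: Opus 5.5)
Theorem~\ref{theo:cv-loc-mass-crit} should follow directly from Proposition~\ref{prop:mass}, via a standard criterion for local convergence in distribution in $(\T,\dlgh)$. The plan is to show $\E[F(\ctm)]\to\E^\psi[F(\ck)]$ for every bounded $\dlgh$-Lipschitz (or uniformly continuous) function $F$ on $\T$; by the Portmanteau theorem this suffices for convergence in distribution on the Polish space $\T$.

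Fix such an $F$, bounded by $1$ and Lipschitz with constant $L$ with respect to $\dlgh$. For every $h>0$ and every $\bt\in\T$ we have $\br_h(\br_h(\bt))=\br_h(\bt)$, so~\eqref{eq:rh(t)} gives $\dlgh(\bt,\br_h(\bt))\le \expp{-h}$. We then split
\[
\left|\N^\psi_r[F(\ctm)]-\E^\psi[F(\ck)]\right|
\le 2L\expp{-h} + \left|\N^\psi_r[F(\br_h(\ctm))]-\E^\psi[F(\br_h(\ck))]\right|.
\]
The function $F$ is bounded nonnegative (up to adding a constant) and measurable on $\T$, so Proposition~\ref{prop:mass} applies and the second term tends to $0$ as $r\to\infty$, for each fixed $h$. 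Taking the $\limsup$ in $r$ and then letting $h\to\infty$ yields the claim.

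The only point needing care is the identification of the conditional law $\N^\psi_r$ with the law of $\ctm$. The conditional distribution is only defined $\rd r$-a.e., whereas the statement is about a limit along all $r$. Here we use the regular version given by Corollary~\ref{cor:mas}, namely $\ctm\overset{\text{(d)}}{=}\ckm$ under $\P^{\varphi,r}$, which is exactly the version for which Proposition~\ref{prop:mass} is stated. This is the main (mild) obstacle; otherwise the argument is a routine Lipschitz-truncation step.
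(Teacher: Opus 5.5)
Your proof is correct and matches the paper's route: the paper obtains Theorem~\ref{theo:cv-loc-mass-crit} as the ``weaker'' consequence of Proposition~\ref{prop:mass}, which gives convergence of $\N^\psi_r[F(\br_h(\ctm))]$ for every bounded nonnegative measurable $F$ and every $h$, using the regular version of $\N^\psi_r$ from Corollary~\ref{cor:mas}. Your bound $\dlgh(\bt,\br_h(\bt))\le \expp{-h}$, deduced from~\eqref{eq:rh(t)}, combined with bounded Lipschitz test functions, is exactly the standard step that turns this into convergence in distribution in $(\T,\dlgh)$.
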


As it  shall be  used in a  forthcoming work, we  also mention  the next
result,  whose  proof   is  immediate  thanks  to~\eqref{eq:K=Sr+graft},
\eqref{eq:K=Tmass}  and~\eqref{eq:ineg-u/u}  (with   the  choice  of  the
regular  version  of  the   conditional  distribution  $\N^\psi_r$  from
Corollary~\ref{cor:mas}).

\begin{cor}[Lower bound  for the mass conditioning]
  \label{cor:mino-mass}
   Assume~\eqref{eq:hyp-param}-\eqref{eq:decroissant} hold  (in particular the branching meachnism $\psi$  is
  critical).  Let $r\in (0, +\infty )$, $\ctm$
  be the Lévy tree  under $\N^\psi$ conditioned to have total mass
  $r$ and   $U^r$  a leaf  of  $\ctm$  chosen uniformly.  For any nonnegative
  measurable functions $F$ and $G$  defined respectively on $\T$ and $\T_1$,  we have that  for all
  $h>0$:
 \begin{align*}
   \N^\psi_r\left[  \ind_{\{\bH(U^r)>h\}}\, 
   F( \br_h(\ctm)) \right]
&\, \geq \, 
 \E^\psi\left[ \ind_{\{ S_h^\ck < r\}}\,
    F(\br_h( \ck))
  \right],\\
     \N^\psi_r\left[  \ind_{\{\bH(U^r)>h\}}\, 
   G( \wbr_h(\ctm), U^r_h) \right]
&\, \geq \, 
 \E^\psi\left[ \ind_{\{ S_h^\ck < r\}}\,
    G(\wbr_h( \ck), h)
  \right].
  \end{align*} 
\end{cor}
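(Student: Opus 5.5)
The plan is to combine the identity~\eqref{eq:K=Tmass} of Theorem~\ref{theo:mass} with the representation~\eqref{eq:K=Sr+graft}, taking for $\N^\psi_r$ the regular version of the conditional distribution given by Corollary~\ref{cor:mas}. With this version, \eqref{eq:K=Tmass} and~\eqref{eq:K=Sr+graft} hold for every $r>0$, not only $\rd r$-a.e. In the critical case $\alpha=\psi'(0)=0$ this gives, for every nonnegative measurable $G$ on $\T_1$ and every $h>0$,
\[
\N^\psi_r\left[\ind_{\{\bH(U^r)>h\}}\, G(\wbr(\ctm,U^r_h))\right]
=\E^\psi\left[\frac{u(r-S^\ck_h)}{u(r)}\,\ind_{\{S^\ck_h<r\}}\, G(\wbr(\ck,h))\right].
\]
Here $\wbr(\ctm,U^r_h)$ is understood as the pointed tree with distinguished vertex $U^r_h$, and $\wbr(\ck,h)$ as the pointed tree with distinguished vertex $h$. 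This is the notation written $\wbr_h$ in the statement.

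For the second inequality, I will use that under~\eqref{eq:decroissant} the potential density $u$ is non-increasing on $(0,\infty)$ (Remark~\ref{rem:u-Bernstein}). On $\{S^\ck_h<r\}$ we have $r-S^\ck_h\le r$, and by Remark~\ref{rem:kappa=0} a.s.\ $S^\ck_h>0$, so $u(r-S^\ck_h)\ge u(r)$. This is exactly~\eqref{eq:ineg-u/u}. Since $G\ge 0$, bounding the ratio from below by $\ind_{\{S^\ck_h<r\}}$ gives
\[
\N^\psi_r\left[\ind_{\{\bH(U^r)>h\}}\, G(\wbr(\ctm,U^r_h), U^r_h)\right]\ge \E^\psi\left[\ind_{\{S^\ck_h<r\}}\, G(\wbr(\ck,h),h)\right].
\]

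For the first inequality, I will reduce to the second by taking $G(\bt,x)=F(\br_{d(\root,x)}(\bt))$. This function is measurable on $\T_1$, since the restriction map is continuous in $(h,\bt)$ and $x\mapsto d(\root,x)$ is measurable. Two identities are needed to make this choice work. First, on $\{\bH(U^r)>h\}$ we have $d(\root,U^r_h)=h$ and $\br_h(\wbr(\ctm,U^r_h))=\br_h(\ctm)$, because $\wbr(\bt,x)\supset\br_{d(\root,x)}(\bt)$ and $\wbr(\bt,x)$ adds only points at distance larger than $d(\root,x)$. Second, in the same way, $\br_h(\wbr(\ck,h))=\br_h(\ck)$. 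Plugging this $G$ into the second inequality then gives the first.

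The only delicate step is the measurability of $G$ and the identity $\br_h\circ\wbr(\cdot,x)=\br_h$ when $d(\root,x)=h$ in the $\T_1$ formalism of~\cite{adhe-2022}. I expect both to be routine, since the definitions in Section~\ref{sec:below/above-level} are explicit.
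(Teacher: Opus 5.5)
Your proposal is correct and follows the paper's own (one-line) argument: with the regular version of $\N^\psi_r$ from Corollary~\ref{cor:mas}, combine \eqref{eq:K=Tmass} with \eqref{eq:K=Sr+graft} at $\alpha=0$, then bound the ratio $u(r-S^\ck_h)/u(r)$ from below by $1$ on $\{S^\ck_h<r\}$ as in \eqref{eq:ineg-u/u}. Your reduction of the $F$-inequality to the $G$-inequality is also fine. Two small simplifications: you could take $G(\bt,x)=F(\br_h(\bt))$ with $h$ fixed, and the positivity of $S^\ck_h$ is not needed, since $S^\ck_h\ge 0$ already gives $r-S^\ck_h\in(0,r]$.
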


\subsection{Convergence for the conditioning by the total
  mass in super-critical cases and some  sub-critical cases}
\label{sec:mass-super}

Intuitively, in the critical case $\alpha=0$, $u^{(0)}(r+a)/ u^{(0)}(r)$
``converges'' to 1 as  $r$ goes to infinity (this is  indeed the case in
the stable case); but this is  no more expected in the sub-critical case
(in    the     sub-critical    quadratic    case,    one     has    that
$\log(u^{(\alpha)}(r+a)/  u^{(\alpha)}(r))$ converges  to a  non trivial
constant times  $a$).  We shall  use the same  trick as in  the discrete
case  for  Bienaymé-Galton-Watson   (BGW)  tree,  see~\cite{Abraham2014}
or~\cite{Abraham2026} an the references therein, where one can exhibit a
parameterized   family  of   offspring  distributions   such  that   the
corresponding conditioned BGW  trees have the same  distribution.  To do
so,    we   consider    the    Girsanov    transformation   stated    in
Section~\ref{sec:meas-LT}.  Recall the set $\Theta^\psi$ defined therein.


Notice that  under~\eqref{eq:hyp-param}-\eqref{eq:variation} the
function $\psi$ is strictly convex, thus  there exists at most one root of $\psi'=0$  in $
\Theta^\psi$; such value when it exits will be  denoted $\theta^*$, and then the
branching mechanism $\psi_{\theta^*}$ and the density
$f_\sigma^{\theta^*}$ (which exists if and only if the distribution of
$\sigma$ on $(0, +\infty )$ under $ \N^\psi$ has a density, see~\eqref{eq:density-q}), simply denoted $\psi_*$ and $f_\sigma^*$, is
critical. 
In this section, we shall consider the following assumption:
\begin{equation}
   \label{eq:theta*}\tag{H7}
   \boxed{\text{There exists $\theta^*\in \Theta^\psi$ such that $\psi'(\theta^ *)=0$. 
}}
\end{equation}
Under~\eqref{eq:hyp-param}, Assumption~\eqref{eq:theta*} is  trivially satisfied in the critical case
(with $\theta^*=0$) and in the super-critical case (where $\psi'(0)<0$) under
the sufficient condition~\eqref{eq:variation}. In the sub-critical case, Assumption~\eqref{eq:theta*}
is  not satisfied in general. For example in the sub-critical case
$\psi(\lambda)=\alpha \lambda + \lambda^{a}$, with $\alpha>0$ and $a\in (1,2)$, then 
$\pi(\rd r) \propto r^{-a-1} \, \rd r$ so that  $\Theta^\psi=[0, \infty )$
and thus~\eqref{eq:theta*} is not satisfied. Notice~\eqref{eq:theta*} is satisfied for $a=2$ and
that $\Theta^\psi=\R$ in this case.

\medskip

Under~\eqref{eq:hyp-param},~\eqref{eq:variation} and~\eqref{eq:theta*}, let $\ck^*$ be the Kesten tree associated with the
critical branching
mechanism $\psi_*$.  The following result is a direct consequence of Theorem~\ref{theo:cv-loc-mass-crit} (with $\psi$ replaced by the
critical branching mechanism $\psi_*$)
and 
the
Girsanov transformation which implies, thanks
to~\eqref{eq:Girsanov-sigma},  that:
\[
  \N^\psi_r[\rd
  \ct]=\N^{\psi_\theta}_r[\rd \ct]
  \quad\text{for all $\theta\in \Theta^\psi$ and $r\in (0, +\infty )$.}
\]

\begin{cor}[Local convergence for the mass conditioning]
\label{cor:cv-loc-mass-generic}
Assume~\eqref{eq:hyp-param}-\eqref{eq:grey} and~\eqref{eq:theta*} hold
(recall that if $\psi$ is (super-)critical, then~\eqref{eq:theta*} holds), 
and the function
$r \mapsto r \expp{- \psi(\theta^*) r} f_\sigma(r)$ is non
increasing on $(0, +\infty )$ (that is, \eqref{eq:decroissant} holds with 
$f_\sigma$ replaced by  $f^*_\sigma$). 
The following 
local   convergence holds in distribution:
 \[
   \ctm  \xrightarrow[r\rightarrow +\infty]{\text{(d)}} \ck^*
     \quad\text{in}\quad
  (\T, \dlgh).
\]
\end{cor}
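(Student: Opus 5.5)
The plan is to transfer everything to the critical mechanism $\psi_*=\psi_{\theta^*}$ via the Girsanov transformation, and then apply Theorem~\ref{theo:cv-loc-mass-crit} to $\psi_*$. The argument has three steps.

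\emph{Step 1: $\psi_*$ satisfies the standing hypotheses.} Since $\theta^*\in\Theta^\psi$, the mechanism $\psi_*$ satisfies~\eqref{eq:hyp-param} and is conservative, so~\eqref{eq:conservative} holds. As recalled in Section~\ref{sec:meas-LT}, $\psi_*$ inherits~\eqref{eq:variation} and~\eqref{eq:grey} from $\psi$. Moreover $\psi_*'(0)=\psi'(\theta^*)=0$, so $\psi_*$ is critical and~\eqref{eq:critical} holds.

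\emph{Step 2: density and monotonicity for $\psi_*$.} By~\eqref{eq:Girsanov-sigma} with $\theta=\theta^*$, on $\{\sigma<\infty\}$ we have $\N^{\psi_*}[\rd\ct]=\expp{-\psi(\theta^*)\sigma}\N^\psi[\rd\ct]$. Hence, by~\eqref{eq:density-q}, $\sigma$ has a density $f^*_\sigma(r)=\expp{-\psi(\theta^*)r}f_\sigma(r)$ under $\N^{\psi_*}$. This density is continuous, so~\eqref{eq:densite} holds for $\psi_*$. The monotonicity assumption in the statement is exactly that $r\mapsto rf^*_\sigma(r)$ is non-increasing, which gives~\eqref{eq:decroissant} for $\psi_*$. (The assumption of a continuous density $f_\sigma$ is implicit in the statement.)

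\emph{Step 3: identify the conditional laws.} Under $\N^\psi$, the tree $\ctm$ has total mass $r<\infty$. Since $\N^\psi$ and $\N^{\psi_*}$ differ on $\{\sigma<\infty\}$ only by the density $\expp{-\psi(\theta^*)\sigma}$, which depends on $\ct$ only through $\sigma$, the two disintegrations with respect to $\sigma$ coincide. Concretely, for nonnegative measurable $F$ on $\T$ and $g$ on $(0,\infty)$,
\[
\N^{\psi_*}[F(\ct)g(\sigma)]=\int g(r)\expp{-\psi(\theta^*)r}f_\sigma(r)\,\N^\psi_r[F]\,\rd r,
\]
which shows that $\N^\psi_r$ is also a version of $\N^{\psi_*}_r$. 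We then choose the regular version given by Corollary~\ref{cor:mas} for $\psi_*$. Theorem~\ref{theo:cv-loc-mass-crit} applied to $\psi_*$ then yields $\ctm\to\ck^*$ in distribution.

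The only delicate point is the choice of versions: conditional laws are defined only $\rd r$-a.e., whereas the statement concerns every large $r$. Two routes are available. The first is to define $\ctm$ through the regular version of Corollary~\ref{cor:mas} for $\psi_*$, which is legitimate since that is a version for $\psi$ as well. The second, more robust, is to note that the proof of Proposition~\ref{prop:mass} only uses the identity~\eqref{eq:K=Tmass} together with the lower bound from~\eqref{eq:ineg-u/u}, and both transfer verbatim once the versions are fixed consistently.
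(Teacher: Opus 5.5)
Your proposal is correct and follows the paper's route: the paper deduces the corollary directly from Theorem~\ref{theo:cv-loc-mass-crit} applied to the critical mechanism $\psi_*$, using the Girsanov identity~\eqref{eq:Girsanov-sigma} to obtain $\N^\psi_r[\rd\ct]=\N^{\psi_{\theta^*}}_r[\rd\ct]$. Your Steps 1--3 make explicit what the paper leaves implicit when it asserts this equality for every $r>0$, namely the hypothesis checks for $\psi_*$, the continuity of the density, and the choice of a common regular version via Corollary~\ref{cor:mas} for $\psi_*$.
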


\begin{rem}[The non generic case]
  \label{rem:mass-condensation}
  The   so-called   non   generic   case   corresponds   to   the   case
  where~\eqref{eq:theta*} is not satisfied; this can only happen when
  $\psi$ is sub-critical.  In the discrete setting for
  BGW tree, see~\cite{js11,Janson2012} or~\cite{Abraham2014a}, the local
  limit of sub-critical BGW trees conditioned  to have a large number of
  vertices is  a tree with a  vertex at random finite  distance from the
  root  with infinitely  many  children.  We expect  also  to observe  a
  condensation  at finite  height  for  the local  limit  of Lévy  trees
  conditioned to  have a very  large mass when~\eqref{eq:theta*}  is not
  satisfied        (possibly        under        other        hypothesis
  than~\eqref{eq:hyp-param}-\eqref{eq:densite}): however the limit would
  not be locally compact and thus  the convergence would not be in $\T$,
  see also  the discussion  from Remark~\ref{rem:sub-critic-d}  where we
  could use therein a  trick to represent the ``local  limit'' with condensation
  at random finite height $E$.
\end{rem}

\subsection{Proof of Theorem~\ref{theo:mass}}
\label{sec:proof-theo-mass}
Let $\ct$ be a Lévy tree with distribution $\N^\psi[\rd \ct]$ and $U$ a
leaf of $\ct$ chosen according to the  mass measure $\bm$. Recall
that $\sigma$ is the total mass of the  measure $\bm$. 
Let  $G$ be a  nonnegative
  measurable function defined on the set $\T_1$  of pointed locally
  compact rooted trees.  By construction, we have with $U^r$
  as in Theorem~\ref{theo:mass}: 
\[
  \N^\psi\left[G(\ct, U)\right]=\int_0^\infty
  \N^\psi_r\left[G(\ctm, U^r)\right]\, 
  rf_\sigma(r)\, 
\rd r.
\]
We deduce that for $\gamma\geq 0$, $h>0$, $U^r_h$ defined in Theorem~\ref{theo:mass},
and  $U_h$ defined similarly as the ancestor of $U$ at level $h$ on the
event $\{\bH(U)> h\}$, that:
\begin{equation}
   \label{eq:A1}
  \N^\psi\left[\ind_{\{\bH(U)>h\}}\, \expp{-\gamma \bH(U)} \, G(\ct, U_h) \right]
    = \int_0^\infty   \N^\psi_r\left[\ind_{\{\bH(U^r)>h\}} \expp{-\gamma \bH(U^r)} \, G(\ctm, U^r_h)\right]\,
  rf_\sigma(r)\,  
\rd r.
\end{equation}

For a pointed tree $(T,d, \root,  x)\in \T_1$, where $\root, x\in
T$, we uniquely decompose the tree $T$ according to the branch $\II{\root, x}$
  and the sub-trees grafted on this branch:
  \[
    T= \II{\root, x}\circledast_{i\in I}(T_i,x_i),
    \]
    where $x_i$ belongs to the branch $\II{\root, x}$ and $T_i$ is the
      union of all the connected components of $T\setminus \{x_i\}$ not
    containing the root nor $x$, with $x_i$ added as a root. Notice this
    decomposition is unique (and measurable using an adaptation of~\cite[Proposition~5.32]{adhe-2022}).
    We shall consider a particular choice of function $G$ defined  by:
 \begin{equation}
   \label{eq:G=expg}
      G(T,x)=\exp{\left(  - \sum_{i\in I} G_0(d(\root, x_i), T_i)\right)},
    \end{equation} 
      with  $G_0$  a nonnegative measurable function defined on
      $\R_+\times\T $.

Set $\alpha=\psi'(0)\geq 0$ and for $\lambda,\gamma\geq 0$ and $h>0$:
\begin{align*}
  A(\gamma)
  &= \int_0^\infty \expp{-\lambda r}\,
     \N^\psi_r\left[\ind_{\{\bH(U^r)>h\}} \, \expp{-\gamma \bH(U^r)} \,
             G(\wbr(\ctm, U^r_h))\right]\, 
     rf_\sigma(r)\,  
     \rd r,\\
  B(\gamma)
  &=
  \int_{[0, +\infty )} \expp{-\lambda r} \pot^{(\alpha)} (\rd r)\,
   \E^{\varphi, r}\left[ \ind_{\{ \zeta^r>h\}}\, \expp{- \gamma \zeta^r }\, G(\wbr( \ckm, h)) 
   \right].
\end{align*}
To prove
that~\eqref{eq:K=Tmass} holds $\rd r$-a.e., it is enough in view
of~\eqref{eq:r*pi=u} to prove that 
$A(0)=B(0)$ for all $\lambda>0$, $h\geq 0$ and functions  $G$ given by~\eqref{eq:G=expg}  with
$G_0$ any  nonnegative measurable function such that $G_0(s, \cdot)=0$ for $s\geq h$.

Set:
\[
  F(h)=\alpha h+
\int_0^h\rd s\,  \bN^\psi[ 1-
\expp{- \lambda \sigma - G_0(s, \ct)} ].
\]
On the one hand, using~\eqref{eq:A1} and the decomposition of the Lévy tree along an
ancestral line which is a consequence of~\eqref{eq:Levy-Kh}
 at a given level $a$ and the
occupation formula~\eqref{eq:int-la}
for the  local times, we obtain:
\begin{align*}
  A(\gamma)
&= \N^\psi\left[ \expp{-\gamma \bH(U)} \,
\ind_{\{\bH(U)>h\}} \,
   \expp{-\lambda \sigma -\sum_{i\in I_U}
  G_0(h^U_i,
  \ct^U_i)  \ind_{\{h^U_i\leq  h\}}}
  \right]\\
   &= \int_h^\infty  \rd a\, \expp{-
   \gamma a }\,  \N^\psi\left[ \int_\ct \ell^a (\rd x)  \expp{-\sum_{i\in I}
   \left( \lambda \sigma^x_i + G_0(h^x_i,
    \ct^x_i)  \ind_{\{h^x_i\leq  h\}}\right)} \right]\\
  &=\int_h^\infty  \rd a\, \expp{ -\gamma a}\, \expp{ -\alpha a - \int_0^a \rd s \, \bN^\psi\left[1-
        \expp{-\lambda \sigma - G_0(s,\ct)\ind_{\{s\leq h\}}} \right]}\\
  &=\int_h^\infty  \rd a\, \expp{ -(\gamma+\alpha) a   - \int_0^h \rd s \, \bN^\psi\left[1-
        \expp{-\lambda \sigma - G_0(s,\ct)} \right]  - \int_h^a  \rd s \, \bN^\psi\left[1-
        \expp{-\lambda \sigma } \right]}\\
 &= \int_h^\infty  \rd a\, \expp{-
      \gamma a -   F(h) - (a-h) (\alpha+\varphi(\lambda))    
  }\\
  &= \frac{\expp{- \gamma h - F(h)}}{\gamma+ \psi'\circ \psi^{-1}  (\lambda)},
\end{align*}
where  for  the  fith  equality  we  used~\eqref{eq:bN-moment}  to  get
$\int_h^a  \rd s  \, \bN^\psi\left[1-  \expp{-\lambda \sigma  } \right]=
(h-a) \varphi(\lambda) $ with $\varphi= \psicr'\circ \psi^{-1}$ and that
$\alpha+\varphi= \psi' \circ \psi^{-1}$.

On the other hand, set
 $\bS_{[0, h]}=(S_t)_{t\in [0, h]}$ and 
 $Q_h= \E^{\varphi,r}\left[G(\wbr( \ckm, h))
\, \big|\, \bS_{[0, h]}\right]$. 
We obtain:
  \begin{align*}
    B(\gamma)
&=  \int_{(0, +\infty )} \expp{-\lambda r} \pot^{(\alpha)} (\rd r)\,
   \E^{\varphi,r}\left[ \ind_{\{ \zeta^r>h\}}\, \expp{- \gamma \zeta^r }\, Q_h
    \right]
    \\
&= \expp{-\gamma h }  \int_{(0, +\infty )} \expp{-\lambda r} \rd r\, 
    \int_0^\infty  \gamma  \expp{-\gamma s} \, \rd s\, u^{(\alpha)}(r)\,  \E^{\varphi,r}\left[ 
\left( \ind_{\{ \zeta^r>h\}}-  \ind_{\{ \zeta^r>s+h\}}\right)\, Q_h 
    \right]
    \\
&= \expp{-\gamma h }  \int_{(0, +\infty )} \expp{-\lambda r} \rd r\, 
  \int_0^\infty  \gamma  \expp{-\gamma s} \, \rd s\\
    & \hspace{2cm} \E^\varphi\left[\left(\expp{-\alpha h}\, 
   u^{(\alpha)}  (r- S_h)   \ind_{\{ S_h<r\}} -
 \expp{-\alpha (s+h)}\,  u^{(\alpha)} (r- S_{s+h}) 
 \ind_{\{ S_{h+s}<r\}}\right)\, Q_h 
    \right]
    \\
&= \frac{ \expp{-(\gamma+\alpha) h }}  {\psi'\circ \psi^{-1}(\lambda)}
    \int_0^\infty  \gamma  \expp{-\gamma s} \, \rd s\,\E^\varphi\left[
  \left(\expp{-\lambda S_h} - \expp{-\lambda S_{h+s} - \alpha s}\right)
\, Q_h 
    \right]
    \\
&=\frac{ \expp{-(\gamma+\alpha) h }}  {\psi'\circ \psi^{-1}(\lambda)}
    \int_0^\infty  \gamma \expp{-\gamma s} \, \rd s\,\E^\varphi\left[
  \expp{-\lambda S_h } \left(1- \expp{-s (\alpha+ \varphi(\lambda))} \right)
\, Q_h 
  \right]\\
&=
\frac{ \expp{-(\gamma+\alpha) h }}  {\gamma+\psi'\circ \psi^{-1}(\lambda)}
  \,\E^\varphi\left[
  \expp{-\lambda S_h }
\, Q_h 
  \right]  ,
  \end{align*}
where we used  $ \expp{- \gamma z}
\ind_{\{z>h\}} =\expp{-\gamma h} \int_0^\infty  \gamma \expp{-\gamma s}
(\ind_{\{z>h \}} - \ind_{\{z> h+s\}})\, \rd s$ and the density $u^{(\alpha)}$ of the
potential $\pot^{(\alpha)}$ for the second equality, \eqref{eq:cond-S-die} for the third as $Q_h$ is
$\cf_h$-measurable with $\P^{\varphi, r}=\P^{\varphi,r}_0$ (see also~\eqref{eq:K=Sr+graft}), the equalities:
\[
  \int_z^\infty  \rd r \expp{-\lambda r } u^{(\alpha)}(r-z)
= \expp{-\lambda z} \int_{[0, \infty )} \expp{-\lambda r} \, \pot^{(\alpha)}(\rd r)
=\frac{  \expp{-\lambda
  z}}{\alpha+\varphi(\lambda)}=\frac{  \expp{-\lambda
  z}}{\psi'\circ \psi^{-1}(\lambda)}
\]
for the fourth, and the Markov property of $\bS$ at
time $h$ for the fifth.
To conclude, we use the representation of the tree $ \ckm$, see~\eqref{eq:def-ckm},
to get:
\[
   \E^\varphi\left[
  \expp{-\lambda S_h} 
\, Q_h 
  \right]
=  \E^\psi\left[ \expp{- \sum_{h_i\leq  h} \lambda \sigma_i + G_0(h_i,
    \ct_i)}\right]
=  \expp{-\int_0^h \rd s\,  \bN^\psi\left[1 - \expp{-\lambda \sigma -
      G_0(s, \ct)}\right]}
=\expp{- F(h)+\alpha h},
\]
so we obtain:
\[
    B(\gamma)  =\frac{ \expp{-\gamma h -F(h)}  }{\gamma + \psi'\circ \psi^{-1}(\lambda)}\cdot
\]
We deduce that  $A(\gamma)=B(\gamma)$ for $\gamma\geq 0$ and thus $A(0)=B(0)$. So
the proof is complete.

\bibliographystyle{abbrv}
\bibliography{biblio}

\cuthere \medskip
\begin{center}
 \textbf{Index of notation}
\end{center}
\vspace{1em}

\renewcommand\labelitemi{-}
\setlength{\columnseprule}{1pt}
\begin{multicols}{2}
\noindent
\hrulefill

\begin{center}
   \textbf{Trees and pointed trees}
\end{center}

\begin{itemize}[leftmargin=*,itemsep=0.5em]
\item $\bt$, $T$, $\tau$, $\ct$, $\ck$: generic notations for  trees (or class of
  equiv.\ trees).
\item  $(\bt, x)$: a  (or a class of
  equiv.\ of)  tree with distinguished vertex $x\in \bt$.
\item $d$: generic distance on a tree.
\item  $\root$: generic notation for the root of a tree.

 \item $\bH(x)$ height of vertex $x$ (or distance from $x$ to $\root$). 
 
\item $R_h(\bt)$: the tree $\bt$ truncated at level $h$.

  \item $\wbr(\bt, x)$: the tree $\bt$ without the subtree above the
    vertex $x$.

\item $\bH(\bt)$: height of the tree $\bt$.
\item  $\lb x, y \rb$: the branch  joining the vertices $x$ to $y$.

\item $\II{0, h}$: the segment $[0,h]$ seen as a tree with root
  $\root=0$, and $h$ as a distinguished vertex. 
  
\item $\spine$: the infinite spine tree with root
  $\root=0$.

\item $\T$: Polish space  of (equiv.\ class of) rooted loc.\ compact closed trees.
\item $\T_1$: Polish space  of (equiv.\ class of) rooted loc.\ compact closed
  pointed trees.

  \end{itemize}
  \hrulefill

\begin{center}
   \textbf{Functions and random variables}
\end{center}

\begin{itemize}
\item $\psi$ branching mechanism with Lévy measure $\pi$ and quadratic
  parameter $\beta$.
\item $\bar \pi(r)=\pi((r, \infty ))$.
  \item $\bY$ CB starting at $x$ with branching mechanism  $\psi$ under
    $\P^\psi_x$.
  \item $\psi_\theta(\cdot)=\psi(\cdot + \theta) - \psi(\theta)$.

  \item $\psicr(\lambda)= \psi(\lambda) - \psi'(0) \lambda$.
    \item   Laplace exponent  $\varphi=\psicr'\circ \psi^{-1}$ from
      Section~\ref{sec:density-sigma}.
    \item $\pot^{(\alpha)}$: $\alpha$-potential of subordinator with
      Laplace exponent $\varphi$.
      \item $u^{(\alpha)}$ density of  $\pot^{(\alpha)}$. 
\end{itemize}

  \begin{center}
   \textbf{Lévy tree and Kesten tree}
 \end{center}

\begin{itemize}
\item $\ct$: a Lévy tree.

  \item $\ct_{[r]}$: a Lévy tree with root with ``size'' $r$.
 
\item $\ck$: a Kesten tree, with infinite spine $\spine$. 
\item $\wck_h= \wbr (\ck, h)$: the Kesten tree whose infinite spine is
  truncated at level $h$. 

 \item $\ell^a(\rd x)$: the local time at level $a$ on a Lévy
   tree.

 \item $\ell^{a,\ck}(\rd x)$: the local time  at level $a$ on the Kesten
   tree $\ck$.

\item $S^\ck_h$ total mass of the trees grafted on the spine of
  the Kesten tree up to level $h$.

 \item $\sigma=\sigma^\ct$ total mass of the Lévy tree $\ct$.
  
\item $f_\sigma$ ``density'' of $\sigma$ under $\N^\psi$.

 \item $\cth$: Lévy tree cond.\ to have height $h$.


\item   $ \ctn$: Lévy tree cond.\ to have  one vertex  of maximal
  size $\delta$.

  \item   $ \ctm$: Lévy tree cond.\ to have  total mass~$r$.

\end{itemize}

 \hrulefill
  \begin{center}
   \textbf{Probability and excursion measures}
 \end{center}

 \begin{itemize}
 \item $\N^\psi[\rd \ct]$: excursion measure for Lévy trees.
 \item $\P_r^\psi$: distribution of $\ct_{[r]}$.
\item $\bN^\psi$: intensity of the grafting on the infinite spine of the
  Kesten tree.
\item $\P^\psi$: distribution of the Kesten tree.

\end{itemize}

\end{multicols}
\end{document}